\documentclass[a4paper,english,10pt]{amsart}
\usepackage[applemac]{inputenc}
\usepackage{babel}
\usepackage{amsfonts}
\usepackage{amsmath}
\usepackage{amssymb}
\usepackage{amsthm}
\usepackage{graphicx}
\usepackage[all]{xy}
\usepackage{textcomp}
\usepackage{enumerate} 

\newtheorem{thm}{Theorem}[section]  
\newtheorem{cor}[thm]{Corollary}
\newtheorem{lem}[thm]{Lemma}
\newtheorem{defi}[thm]{Definition}
\newtheorem{prop}[thm]{Proposition}
\newtheorem{es}[thm]{Example}
\newtheorem{rem}[thm]{Remark}

\title{A Riemann-Hurwitz Formula for Skeleta in Non-Archimedean Geometry}
\author{John Welliaveetil}

\begin{document}

\maketitle

\begin{abstract}
Let $k$ be an algebraically closed non-Archimedean non trivially real valued field which is complete with respect to its valuation. 
      Let $\phi : C' \to C$ be a finite morphism between smooth projective irreducible $k$-curves.
The morphism $\phi$ induces a morphism $\phi^{\mathrm{an}} : C'^{\mathrm{an}} \to C^{\mathrm{an}}$ 
between the Berkovich analytifications of the curves. We construct
a pair of deformation retractions of $C'^{\mathrm{an}}$ and $C^{\mathrm{an}}$
which are compatible with the morphism $\phi^{\mathrm{an}}$ and
 whose images 
$\Upsilon_{C'^{\mathrm{\mathrm{an}}}}$, $\Upsilon_{C^{\mathrm{\mathrm{an}}}}$
are closed subspaces of $C'^{\mathrm{an}}$,  
$C^{\mathrm{an}}$ that are homeomorphic to finite metric graphs. 
We refer to such closed subspaces as skeleta.  
In addition, the subspaces 
$\Upsilon_{C'^{\mathrm{\mathrm{an}}}}$ and $\Upsilon_{C^{\mathrm{\mathrm{an}}}}$
are such that their complements in their respective analytifications decompose into the disjoint union of 
isomorphic copies of
Berkovich open balls. 
The skeleta can be seen as the union of vertices and edges, thus allowing us to define their genus. 
The genus of a skeleton in a curve $C$ is in fact an invariant of the curve which we call $g^{\mathrm{an}}(C)$.  
The pair of compatible deformation retractions forces the morphism $\phi^{\mathrm{an}}$ to restrict to a
map $\Upsilon_{C'^{\mathrm{\mathrm{an}}}}  \to \Upsilon_{C^{\mathrm{\mathrm{an}}}}$. 
 We study how the genus of $\Upsilon_{C'^{\mathrm{\mathrm{an}}}}$ can be 
calculated using the morphism $\phi^{\mathrm{an}}_{|\Upsilon_{C'^{\mathrm{\mathrm{an}}}}}$ and 
invariants defined on $\Upsilon_{C^{\mathrm{an}}}$. 
\end{abstract}

\tableofcontents

\emph{Acknowledgments:} This research was funded by the ERC Advanced Grant NMNAG. I would like to thank my advisor 
 Professor François Loeser for his support and guidance during this period of work.
 I am grateful as well to J\'er\^ome Poineau and Matt Baker for their suggestions and encouragement. 
  I would also like to thank Marco Macaulan, Antoine Ducros, 
 Giovanni Rosso and Yimu Yin for the discussions and comments which have been integral to the development of this article.  

\section{Introduction}
    Our goal in this paper is to define and study a topological invariant on algebraic curves defined over suitable non-Archimedean real valued fields
 that arise naturally from their analytifications. 
 
  Let $k$ be an algebraically closed, complete non-Archimedean non trivially real valued field. 
 Let $C$ be a $k$-curve.
 By $k$-curve, we mean a one dimensional connected reduced separated scheme of finite type over the field $k$. 
  It is well known that there exists a deformation retraction of $C^{\mathrm{an}}$ onto a closed subspace $\Upsilon$ 
 which is homeomorphic to a finite metric graph [\cite{berk}, Chapter 4], [\cite{HL}, Section 7]. We call such subspaces \emph{skeleta}.
The skeleton $\Upsilon$ can be decomposed into a set of vertices 
$V({\Upsilon})$ and a set of edges 
$E({\Upsilon})$. 
We define the genus of the skeleton 
$\Upsilon$ as follows. 
\begin{align*}
  g(\Upsilon) = 1 - V({\Upsilon}) + E({\Upsilon}).  
\end{align*}
  In Proposition 2.25, we show that $g(\Upsilon)$ is a well defined invariant of the curve and does not depend on the 
  retract $\Upsilon$. Let $g^{\mathrm{an}}(C) := g(\Upsilon)$ for any such $\Upsilon$. 
We study how $g^{\mathrm{an}}$ varies 
 for a finite morphism using a compatible pair of deformation retractions.

 Let $C',C$ be smooth projective irreducible $k$-curves and  
 $\phi : C' \to C$ be a finite morphism. 
The 
morphism $\phi$ induces a morphism between the respective analytifications 
which we denote $\phi^{\mathrm{an}}$. Hence we have 
\begin{align*}
  \phi^{\mathrm{an}} : C'^{\mathrm{an}} \to C^{\mathrm{an}}. 
\end{align*}
   We prove that there exists a
 pair of \emph{compatible} deformation retractions
\begin{align*}
 \psi : [0,1] \times C^{\mathrm{an}} \to C^{\mathrm{an}}
 \end{align*}
 and 
\begin{align*}
  \psi' : [0,1] \times C'^{\mathrm{an}} \to C'^{\mathrm{an}}
\end{align*}
 with the following properties. 
\begin{enumerate} 
\item The sets $\Upsilon_{C'^{\mathrm{an}}} := \psi'(1,C'^{\mathrm{an}})$ and
 $\Upsilon_{C^{\mathrm{an}}} := \psi(1,C^{\mathrm{an}})$ are closed subspaces of $C'^{\mathrm{an}}$ and $C^{\mathrm{an}}$ 
 which are homeomorphic to finite metric graphs. Furthermore, we have that 
  $\Upsilon_{C'^{\mathrm{an}}} = (\phi^{\mathrm{an}})^{-1}(\Upsilon_{C^{\mathrm{an}}})$.
\item  The analytic spaces $C'^{\mathrm{an}} \smallsetminus \Upsilon_{C'^{\mathrm{an}}}$ and 
$C^{\mathrm{an}} \smallsetminus \Upsilon_{C^{\mathrm{an}}}$ decompose into the disjoint union of 
isomorphic copies of
Berkovich open disks i.e. there exist weak semistable vertex sets (cf. Definition 2.19) 
$\mathfrak{A} \subset C^{\mathrm{an}}$ and $\mathfrak{A}' \subset C'^{\mathrm{an}}$ such that 
$\Upsilon_{C^{\mathrm{an}}} = \Sigma(C^{\mathrm{an}},\mathfrak{A})$ 
and $\Upsilon_{C'^{\mathrm{an}}} = \Sigma(C'^{\mathrm{an}},\mathfrak{A}')$. 

\item  The deformation retractions $\psi$ and $\psi'$ are said to be 
\textbf{compatible} in the sense that the following diagram is commutative. 

 \setlength{\unitlength}{1cm}
\begin{picture}(10,5)
\put(3.5,1){$[0,1] \times C^{\mathrm{an}}$}
\put(7,1){$C^{\mathrm{an}}$}
\put(3.5,3.5){$[0,1] \times C'^{\mathrm{an}}$}
\put(7,3.5){$C'^{\mathrm{an}}$}
\put(4.4,3.3){\vector(0,-1){1.75}}
\put(7.1,3.3){\vector(0,-1){1.75}}
\put(5.4,1.1){\vector(1,0){1.55}}
\put(5.4,3.6){\vector(1,0){1.55}}
\put(5.8,0.7){$\psi$}
\put(5.6,3.2){$\psi'$}
\put(4.5,2.3){$id \times \phi^{\mathrm{an}}$}
\put(7.2,2.3){$\phi^{\mathrm{an}}$} .
\end{picture}   
 \end{enumerate}
   
 In Sections 4 and 5, we study how $g^{\mathrm{an}}(C')$ and $g^{\mathrm{an}}(C)$ relate to each other 
    under the added assumption that $\phi : C' \to C$ is a finite morphism between smooth projective irreducible curves. 
 The necessary notation to make sense of the following result - Corollary 4.9 can be found in Section 4.1 and Definitions 4.6 and 4.8. \\ 
    
\noindent $\mathbf{Corollary}$ $\mathbf{4.9}$
      \emph{ Let $\phi : C' \to C$ be a finite separable morphism between 
      smooth projective irreducible curves over the field $k$. Let $g^{\mathrm{an}}(C'), g^{\mathrm{an}}(C)$ be as in Definition 2.26.
  We have the following equation. 
  \begin{align*}
     2g^{\mathrm{an}}(C') - 2 = \mathrm{deg}(\phi)(2g^{\mathrm{an}}(C) - 2) + \Sigma_{p \in C^{\mathrm{an}}} 2 i(p) g_p  + R  - \Sigma_{p \in C^{\mathrm{an}}} R^1_p.
                              \end{align*}  }

    In Section 5, we present another method to calculate the invariant $g^{\mathrm{an}}(C')$ using the existence of a 
    pair of compatible deformation retractions $\psi$ and $\psi'$ on $C^{\mathrm{an}}$ and $C'^{\mathrm{an}}$ whose images are skeleta  
    $\Upsilon_{C^{\mathrm{an}}}$ and $\Upsilon_{C'^{\mathrm{an}}}$. We assume in addition 
    that the morphism $\phi : C' \to C$ is such that the induced extension of function fields $k(C) \hookrightarrow k(C')$ is 
    Galois. By construction of $\psi'$ and $\psi$, $\phi^{\mathrm{an}}$ restricts to a morphism between the two skeleta. 
    We show that 
    the genus
of the skeleton $\Upsilon_{C'^{\mathrm{an}}}$ can be calculated using 
invariants associated to the points of $\Upsilon_{C^{\mathrm{an}}}$.
In order to do so we define a divisor $w$ on $\Upsilon_{C^\mathrm{an}}$ whose degree is 
$2g(\Upsilon_{C'^{\mathrm{an}}}) - 2$. A divisor on a finite metric graph is an element of the free abelian 
group generated by the points on the graph. 

We define $w$ as follows.  
For a point $p \in \Upsilon_{C^\mathrm{an}}$, let $w(p)$ denote the order of the divisor at $p$. 
We set
\begin{align*}
   w(p) :=  (\sum_{e_p \in E_p, p' \in (\phi^{\mathrm{an}})^{-1}(p)} l(e_p,p')) - 2n_p. 
\end{align*}
    The terms in this expression are defined as follows. Let $T_p$ denote the 
    tangent space at the point $p$ (cf. 2.2.3, 2.4.1).
  \begin{enumerate}
  \item Let $E_p \subset T_p$ be those elements for which there exists a representative
 starting from $p$ and contained completely in 
$\Upsilon_{C^{\mathrm{an}}}$.
 \item Let $p' \in C'^{\mathrm{an}}$ such that 
 $\phi^{\mathrm{an}}(p') = p$.  
The morphism $\phi^{\mathrm{an}}$ induces a map $d\phi_{p'}$ between the tangent spaces $T_{p'}$ and $T_{p}$ (cf. 2.2.3, 2.4.1).
Let $e_p \in E_p$. We define $L(e_p,p') \subset T_{p'}$ to be the preimages of $e_p$ for the map $d\phi_{p'}$.  
As $\Upsilon_{C'^{\mathrm{an}}} = (\phi^{\mathrm{an}})^{-1}(\Upsilon_{C^{\mathrm{an}}})$, any
element of $L(e_p,p')$ can be represented by a geodesic segment that is contained completely in 
$\Upsilon_{C'^{\mathrm{an}}}$. Let $l(e_p,p')$ denote the cardinality of the set $L(e_p,p')$. 
\item We define $n_p$ to be the cardinality of the set of preimages of the point $p$ i.e. 
$n_p := \mathrm{card}  \{(\phi^{\mathrm{an}})^{-1}(p)\}$.  
\end{enumerate}

     In Proposition 5.4, we show that $w$ is indeed a well defined divisor whose degree is equal to 
$2g(\Upsilon_{C'^{\mathrm{an}}}) - 2$. We then study the values $n_p$   
and $l(e_p,p')$ described above. These results are sketched below. 
\\

 We study the value $n_p$ for $p \in \Upsilon_{C^{\mathrm{an}}}$ in terms of
two invariants - $\mathrm{ram}(p)$ and $c_1(p)$ which are defined as follows. 

 Let $p \in \Upsilon_{C^{\mathrm{an}}}$. 
\begin{enumerate}
\item If $p$ is a point of type I then we set $\mathrm{ram}(p)$ to be the ramification degree 
$\mathrm{ram}(p'/p)$ for any $p' \in C'^{\mathrm{an}}$ such that 
$\phi^{\mathrm{an}}(p') = p$. As the morphism $\phi$ is Galois, $\mathrm{ram}(p)$ is well defined. If 
$p$ is not of type I then we set $\mathrm{ram}(p) := 1$.     

\item In order to define the invariant $c_1$, we introduce an equivalence relation on $C'(k)$. For $y_1,y_2 \in C'(k)$, we set 
$y_1 \sim_{c(1)} y_2$ if $\phi(y_1) = \phi(y_2)$ and $\psi'(1,y_1) = \psi'(1,y_2)$. Let $c_1(y)$ denote
 the cardinality of the equivalence class that contains 
$y$.
In Lemma 5.8, we show that the function $c_1 : C(k) \to \mathbb{Z}_{\geq 0}$ defined by setting $c_1(x) = c_1(y)$ for
any $y \in \phi^{-1}(x)$ is well defined. 
 We proceed to show that
if $x \in C(k)$ then $c_1(x)$ depends only on the point $\psi(1,x) \in \Upsilon_{C^{\mathrm{an}}}$. 
This defines $c_1 : \Upsilon_{C^{\mathrm{an}}} \to \mathbb{Z}_{\geq 0}$.    
\end{enumerate} 
    The values $c_1(p)$ and $\mathrm{ram}(p)$ can be used to calculate $n_p$ by the following relation (Proposition 5.10). 
\begin{align*}
    n_p = [k(C') : k(C)]/(c_{1}(p)\mathrm{ram}(p)).  
\end{align*}     
  \\
  
   We simplify the term $l(e_p,p')$
   which appears in the expression defining $w$. 
   Let $p \in \Upsilon_{C^{\mathrm{an}}}$ and $e_p \in E_p$.  
    In Lemma $5.12$ we show that $l(e_p,p')$ remains constant as $p'$ varies through 
the set of preimages $p' \in (\phi^{\mathrm{an}})^{-1}(p)$. We set 
$l(e_p) := l(e_p,p')$. 
   We    
   introduce the invariants - $\widetilde{\mathrm{ram}(e_p)}$ and 
   $\widetilde{\mathrm{ram}}(p)$ to study $l(e_p)$. 
   
   \begin{enumerate} 
\item Let $p \in \Upsilon_{C^{\mathrm{an}}}$.
  By definition $e_p$ is an element of the tangent space $T_p$ at $p$ 
(cf. Sections 2.2.3, 2.4.1). As $p$ is of type II, it corresponds to a discrete valuation of the 
$\tilde{k}$-function field $\widetilde{\mathcal{H}(p)}$. 
 For any $p' \in (\phi^{\mathrm{an}})^{-1}(p)$,
the extension of fields $\widetilde{\mathcal{H}(p)} \hookrightarrow \widetilde{\mathcal{H}(p')}$ can be decomposed into the composite of a 
purely inseparable extension and a Galois extension. Hence the ramification 
degree $\mathrm{ram}(e'/e_p)$ is constant as $e'$ varies through the set of preimages of $e_p$ 
at $T_{p'}$ for the map $d\phi_{p'}^{alg} : T_{p'} \to T_p$ (cf. 2.4.1). Let 
$\widetilde{\mathrm{ram}}(e_p)$ be this number. When $p$ is of type $I$, we set 
$\widetilde{\mathrm{ram}}(e_p) = \mathrm{ram}(p)$ and when $p$ is of type III, we set 
$\widetilde{\mathrm{ram}}(e_p) = c_1(p)$.  
\item For $p \in \Upsilon_{C^{\mathrm{an}}}$, we define $\widetilde{\mathrm{ram}}(p) := \Sigma_{e_p \in E_p} 1/ \widetilde{\mathrm{ram}}(e_p)$.
\end{enumerate} 

 In Proposition $5.15$, we show that if $p \in \Upsilon_{C^{\mathrm{an}}}$ and $e_p \in E_p$ then   
   \begin{align*}
      l(e_p) = [k(C'):k(C)]/(n_p \widetilde{\mathrm{ram}}(e_p)). 
   \end{align*}
 \\   
    
   The results of section 5 are compiled so that the value $2g^{\mathrm{an}}(C') - 2$ can be computed in terms of 
   the invariants $c_1, \widetilde{\mathrm{ram}}$ and $\mathrm{ram}$. 
   \\ 

\noindent $\mathbf{Theorem}$ $\mathbf{5.17}$ \emph{    Let $\phi : C' \to C$ be a finite morphism between smooth projective irreducible $k$-curves such that 
the extension of function fields $k(C) \hookrightarrow k(C')$ induced by $\phi$ is Galois. Let $g^{\mathrm{an}}(C')$ be as in Definition 2.26.
    We have that 
    \begin{align*} 
        2g^{\mathrm{an}}(C') - 2 = \mathrm{deg}(\phi) \Sigma_{p \in \Upsilon_{C^{\mathrm{an}}}} [\widetilde{\mathrm{ram}}(p) - 2/(c_1(p)\mathrm{ram}(p))].  
    \end{align*} }  

      Recently, in \cite{TEM2} and \cite{TEM3}, 
      Michael Temkin, Adina Cohen and Dmitri Trushin have obtained results on wild ramification for finite morphisms between  
  quasi-smooth Berkovich curves which bear some resemblance to results considered in this paper.  
    
\section{Preliminaries}

\subsection{ The analytification of a $k$-variety}
     Let $k$ be a non-trivially non-Archimedean real valued, algebraically closed complete field. 
 Let $|.|$ denote the valuation on $k$. By definition, $|.| : k^* \to \mathbb{R}_{> 0}$ which we extend 
 to $|.| : k \to \mathbb{R}_{\geq 0}$ by setting $|0| := 0$. Similarly, we define $\mathrm{val} : k^* \to \mathbb{R}$ by setting 
 $\mathrm{val} = -\mathrm{log}(|.|)$ and extend it to $\mathrm{val} : k \to \mathbb{R} \cup \infty$ by setting $\mathrm{val}(0) = \infty$.   
 We begin with a brief discussion of
 the analytification of a $k$-variety after which we deal with the analytification of a $k$-curve. 
  By curve, we mean a one dimensional connected reduced separated scheme of finite type over
$k$. 
      
     Let $X$ be a reduced, separated scheme of finite type over the field $k$. Associated functorially
to $X$ is a Berkovich analytic space $X^{\mathrm{an}}$. We examine this notion in more detail. 

     Let $\mathrm{k-an}$ denote the category of
$k$-analytic spaces [\cite{berk2}, 1.2.4], $\mathrm{Set}$ denote the category of sets and $\mathrm{Sch_{lft/k}}$ denote the category of schemes which
are locally 
of finite type over $k$. 
   We define a functor 
\begin{align*}        
   F &:  \mathrm{k-an} \to \mathrm{Set} \\
     &    Y \mapsto \mathrm{Hom}(Y,X)
 \end{align*}
where $\mathrm{Hom}(Y,X)$ is the set of morphisms of $k$-ringed spaces. The following theorem defines the space $X^{\mathrm{an}}$.   

\begin{thm} [\cite{berk}, 3.4.1]
   The functor $F$ is representable by a $k$-analytic space $X^{\mathrm{an}}$ and a morphism 
$\pi : X^{\mathrm{an}} \to X$. For any non-Archimedean field $K$ extending $k$, there is a 
bijection $X^{\mathrm{an}}(K) \to X(K)$. Furthermore, the map $\pi$ is surjective. 
\end{thm}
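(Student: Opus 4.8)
The cited statement is Berkovich's representability theorem; I sketch the construction that underlies it. The plan is to build $X^{\mathrm{an}}$ explicitly in the affine case, verify the universal property there, glue to obtain the general case, and then deduce both the bijection on $K$-points and the surjectivity of $\pi$.

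First I would treat affine space $\mathbb{A}^n_k = \mathrm{Spec}\, k[T_1,\dots,T_n]$. I define $(\mathbb{A}^n_k)^{\mathrm{an}}$ to be the set of multiplicative seminorms $x = |\cdot|_x$ on $k[T_1,\dots,T_n]$ whose restriction to $k$ is the given absolute value, topologised by the coarsest topology for which $x \mapsto |f|_x$ is continuous for every $f$. The kernel of each seminorm is a prime ideal, which furnishes the underlying map $\pi$ to the scheme. Restricting seminorms to closed polydiscs yields affinoid Banach algebras, and the associated net of affinoid domains equips $(\mathbb{A}^n_k)^{\mathrm{an}}$ with a structure sheaf making it a $k$-analytic space. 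For a general affine $X = \mathrm{Spec}(k[T_1,\dots,T_n]/I)$ I take the closed subset $\{x : |f|_x = 0 \text{ for all } f \in I\}$ with the induced analytic structure.

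The heart of the argument is the universal property in the affine case. Given a $k$-analytic space $Y$, a morphism of $k$-ringed spaces $Y \to \mathbb{A}^n_k$ is determined by the images $f_1,\dots,f_n$ of the coordinates, i.e. by an $n$-tuple of global analytic functions on $Y$; each point $y \in Y$, carrying its own multiplicative seminorm, then pulls these back to a bounded multiplicative seminorm $g \mapsto |g(f_1,\dots,f_n)(y)|$ on $k[T_1,\dots,T_n]$, hence a point of $(\mathbb{A}^n_k)^{\mathrm{an}}$. I would check that this assignment is a morphism of analytic spaces and is the unique one compatible with the coordinates, giving $F(Y) = \mathrm{Hom}_{\mathrm{k-an}}(Y, (\mathbb{A}^n_k)^{\mathrm{an}})$. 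The closed-subscheme case follows because such a morphism factors through $X$ exactly when the functions $f_1,\dots,f_n$ satisfy the relations in $I$.

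For general $X$ I would choose an affine open cover and glue the analytifications of the members along the analytifications of their intersections; this is legitimate since analytification sends open immersions to open immersions and the representing property is local on $X$. The bijection $X^{\mathrm{an}}(K) \to X(K)$ is then formal: writing $\mathcal{M}(K)$ for the one-point analytic space attached to a non-archimedean field $K \supseteq k$, one has $X^{\mathrm{an}}(K) = \mathrm{Hom}_{\mathrm{k-an}}(\mathcal{M}(K), X^{\mathrm{an}}) = F(\mathcal{M}(K))$ by representability, and since $\mathcal{M}(K)$ is a point with structure ring $K$, a morphism of $k$-ringed spaces $\mathcal{M}(K) \to X$ is precisely a $k$-morphism $\mathrm{Spec}\, K \to X$, that is, a $K$-point of $X$. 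Finally, for surjectivity of $\pi$, I would take a scheme point $x \in X$, extend the absolute value of $k$ to the residue field $\kappa(x)$ (valuations always extend along field extensions), and compose with $\mathcal{O}_{X,x} \to \kappa(x)$ to obtain a bounded multiplicative seminorm whose kernel is the prime of $x$, producing a point of $X^{\mathrm{an}}$ above $x$. The step I expect to be the main obstacle is verifying, in the affine case, that the presheaf of analytic functions is a genuine structure sheaf of the correct local Banach type and that the evaluation assignment above is a morphism of $k$-analytic spaces rather than merely a map of sets; this is where the analytic content of representability, beyond the set-theoretic bijection, actually lies.
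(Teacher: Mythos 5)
This statement is not proved in the paper at all: it is quoted verbatim from Berkovich's book (Theorem 3.4.1 of \cite{berk}) and used as a black box, so there is no internal argument to compare your proposal against. That said, your sketch follows the standard route of Berkovich's own proof: realise $(\mathbb{A}^n_k)^{\mathrm{an}}$ as the space of multiplicative seminorms on $k[T_1,\dots,T_n]$ extending the absolute value on $k$, exhaust it by the affinoid polydiscs $\mathcal{M}(k\{r^{-1}T\})$ to get the $k$-analytic structure, verify the universal property for affine space via the correspondence between morphisms of locally ringed spaces into $\mathrm{Spec}$ of a ring and ring maps into global sections, cut out closed subschemes, glue over an affine cover, and obtain the $K$-point bijection from representability applied to $\mathcal{M}(K)$ and surjectivity of $\pi$ by extending the valuation to the residue field $\kappa(x)$ (which is finitely generated over $k$ since $X$ is locally of finite type, so a rank-one extension exists). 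The two places you flag as deferred --- that the presheaf of analytic functions on the seminorm space satisfies the affinoid sheaf/net axioms, and that the point-by-point evaluation map is a morphism of $k$-analytic spaces rather than merely of sets --- are indeed where the real work sits in \cite{berk}, so your outline is honest about what remains; as a blind reconstruction of a cited foundational result it is accurate, but it is a proof sketch rather than a complete proof, exactly as you say.
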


  The associated $k$-analytic space $X^{\mathrm{an}}$ is \emph{good}.
This means
 that for every point $x \in X^{\mathrm{an}}$ there exists a neighbourhood of $x$
isomorphic
to an affinoid space. 
Theorem 2.1 implies the existence of a well defined functor
\begin{align*}
   ()^{\mathrm{an}} :& Sch_{lft/k} \to  \mbox{\emph{good} } k-\mathrm{an} \\
           &     X        \mapsto  X^{\mathrm{an}}.  
\end{align*}

As a set $X^{\mathrm{an}}$ is the collection of pairs $\{(x,\eta)\}$ where $x$ is a scheme 
theoretic point of $X$ and $\eta$ is a valuation on the residue field $k(x)$ which extends the 
valuation on the field $k$. We endow this set with a topology as follows. A pre-basic open set 
is of the form $\{(x,\eta) \in U^{\mathrm{an}}||f(\eta)| \in W\}$ where $U$ is a Zariski open subset of $X$ with $f \in O_X(U)$, 
$W$ an open subspace of $\mathbb{R}_{\geq 0}$ and $|f(\eta)|$ is the evaluation of the
 image of $f$ in the residue field $k(x)$ at 
$\eta$. A basic open set is any set which is equal to the intersection of a finite number of pre-basic open sets.    
Properties of the scheme translate to properties of the associated analytic space. If $X$ is proper then $X^{\mathrm{an}}$ is 
compact and if $X$ is connected then $X^{\mathrm{an}}$ is pathwise connected. 
 
 Let $C$ be a $k$-curve.  
As above, the set $C^{\mathrm{an}}$ is the collection of pairs $\{(x,\eta)\}$ where $x$ is a scheme 
theoretic point of $C$ and $\eta$ is a rank one valuation on the residue field $k(x)$ which extends the 
valuation on the field $k$.
We divide the points of $C^{\mathrm{{an}}}$ into four groups using this description. 
 For a point 
$\mathbf{x} := (x,\mu) \in C^{\mathrm{an}}$, let $\mathcal{H}(x)$ denote the completion of the residue field $k(x)$ for the valuation $\eta$.
 Let $s(\mathbf{x})$ denote the trancendence degree of the residue field 
$\widetilde{\mathcal{H}(\mathbf{x})}$ over $\tilde{k}$ and $t(\mathbf{x})$ the rank of the group $|\mathcal{H}(\mathbf{x})^*|/|k^*|$. 
Abhyankar's inequality implies that $s(\mathbf{x}) + t(\mathbf{x}) \leq 1$. This allows us to classify points. We call $\mathbf{x}$
a type I point if it is a $k$-point of the curve. In which case, both $t(\mathbf{x}) = s(\mathbf{x}) = 0$.
 If $s(\mathbf{x}) = 1$ then $t(\mathbf{x}) = 0$ and such a point is said to be of 
type II. If $t(\mathbf{x}) = 1$ then $s(\mathbf{x}) = 0$ and such a point is considered to be of type III. Lastly,
if $t(\mathbf{x}) = d(\mathbf{x}) = 0$ and $\mathbf{x}$ is not a $k$-point of the curve then we call $\mathbf{x}$ a point 
of type IV.   

   The fact that $C$ is connected and separated implies that the analytification 
$C^{\mathrm{an}}$ is Hausdorff and pathwise connected. 
When $C$ is in addition projective, the analytification $C^{\mathrm{an}}$ compact. 
As an example we describe the analytification of the 
projective line $\mathbb{P}^{1,\mathrm{an}}_k$.
\\  
\\

 \subsection{Semistable vertex sets}

\subsubsection{$\mathbb{P}^{1,\mathrm{an}}_k$- The analytification of the projective line over $k$}

        The points of $\mathbb{P}^{1,\mathrm{an}}_k$ can be classified as follows.  
        The set of type I points are the $k$-points $\mathbb{P}^1_k(k)$ of the projective line.  
The type II, III and IV points are of the form $(\zeta, \mu)$ where $\zeta$ is the generic point of $\mathbb{P}^1_k$ and 
$\mu$ is a multiplicative norm on the function field 
$k(\mathbb{P}^1_k)$ which extends the valuation on the field $k$.
The field $k(\mathbb{P}^1_k)$ can be identified with $k(T)$ by choosing coordinates.
Hence describing the set of points of 
$\mathbb{P}^{1,an}_k \setminus \mathbb{P}^1_k(k)$ is equivalent to describing the set of multiplicative norms on the 
function field $k(T)$ which extend the valuation on $k$.
 Let $a \in \mathbb{P}^1_k(k)$ be a 
$k$-point and $B(a,r) \subset k$ denote the closed disk around 
$a$ of radius $r$ contained in $\mathbb{P}^1_k(k)$.
 We define a multiplicative norm $\eta_{a,r}$ on $k(T)$ as follows.
Let $f \in k(T)$. We set $|f(\eta_{a,r})| := \mathrm{sup}_{y \in B(a,r)} \{|f(y)|\}$. It can be checked that 
this is a multiplicative norm on the function field. If $r$ belongs to
$|k^*|$ then $(\zeta,\eta_{a,r})$ is a type II point. Otherwise $(\zeta,\eta_{a,r})$ defines a 
type III point. It can be shown that every type II and type III point is of this form. A type IV point corresponds to 
a family of nested closed disks with empty intersection. Let $J$ be a directed index set and for every 
$j \in J$, $B(a_j,r_j)$ be a closed disk around $a_j \in k$ of radius $r_j$ such that $\bigcap_{j \in J} B(a_j,r_j) = \emptyset$. 
Let $\mathfrak{E} := \{B(a_j,r_j)|j \in J\}$. We define a multiplicative norm $\eta_{\mathfrak{E}}$ on the function field as follows. For 
$f \in k(T)$, let $|f(\eta_\mathfrak{E})| := \mathrm{inf}_{j \in J} \{\mathrm{sup}_{y \in B(a_j,r_j)} |f(y)|\}$. The set of multiplicative 
norms on $k(T)$ defined in this manner corresponds to the set of type IV points in $\mathbb{P}^{1,an}_k$. 

   It is
standard practice to describe the points of $\mathbb{A}^{1,an}_k$ as the collection 
$\mathcal{M}(k[T])$
of multiplicative seminorms 
on the algebra $k[T]$ which extend the valuation of the field $k$. 
 As a set $\mathbb{P}^{1,an}_k = \mathbb{A}^{1,an}_k \cup \{\infty\}$
where $\infty \in \mathbb{P}^1_k(k)$ is the complement of the affine subspace $\mathrm{Spec}(k[T]) \subset \mathbb{P}^1_k$.
       
\subsubsection{The standard analytic domains in $\mathbb{A}^{1,\mathrm{an}}$.}
We follow the treatment in Section 2 of \cite{BPR}. 
   The topological space $\mathbb{P}^{1,an}_k$ is compact,
simply connected and Hausdorff. We now describe certain subspaces of 
$\mathbb{A}^{1,an}_k \subset \mathbb{P}^{1,an}_k$.
The tropicalization map, $\mathrm{trop} : \mathcal{M}(k[T]) = \mathbb{A}^{1,\mathrm{an}} \to \mathbb{R} \cup \infty$
is defined by $p \mapsto -\mathrm{log}|T(p)|$. Using $\mathrm{trop}$, we define certain analytic domains contained 
in $\mathbb{A}^{1,\mathrm{an}}$.  

\begin{enumerate}
\item For $r \in |k^*|$, the standard closed ball of radius $r$, $\mathbf{B}(r)$ is the set \\
$\mathrm{trop}^{-1}([-\mathrm{log}(r),\infty])$. 
The space $\mathbf{B}(r)$ is the affinoid space $\mathcal{M}(k\{r^{-1}T\})$. 
\item For $r \in |k^*|$, the standard open ball of radius $r$ denoted $\mathbf{O}(r)$ is the set $\mathrm{trop}^{-1}((-\mathrm{log}(r),\infty])$. 
The space $\mathbf{O}(r)$ is an open analytic domain contained in $\mathbb{A}^{1,\mathrm{an}}$. 
\item For $r_1,r_2 \in |k^*|$ with $r_1 \leq r_2$, the standard closed annulus $\mathbf{S}(r_1,r_2)$ of inner radius $r_1$ and outer radius 
$r_2$ is the set $\mathrm{trop}^{-1}([-\mathrm{log}(r_2),-\mathrm{log}(r_1)])$. It is the affinoid space $\mathcal{M}(k\{r_1T^{-1},r_2^{-1}T\})$. 
The (logarithmic) modulus of $\mathbf{S}(r_1,r_2)$ is defined to be the value $\mathrm{log}(r_2) - \mathrm{log}(r_1)$.  
\item For $r_1,r_2 \in |k^*|$ with $r_1 < r_2$, the standard open annulus of inner radius $r_1$ and outer radius $r_2$ denoted 
$\mathbf{S}(r_1,r_2)_+$ is the set $\mathrm{trop}^{-1}((-\mathrm{log}(r_2),-\mathrm{log}(r_1)))$. 
The (logarithmic) modulus of $\mathbf{S}(r_1,r_2)_+$ is defined to be the value $\mathrm{log}(r_2) - \mathrm{log}(r_1)$.
\item Let $r \in |k^*|$. The standard punctured 
Berkovich open disk of radius $r$ is the set $\mathbf{O}(r) \smallsetminus \{0\}$ which we denote $\mathbf{S}(0,r)_+$.
\end{enumerate} 

   We now highlight certain sub spaces of the analytic domains defined above.
   The tropicalization map defined above restricts to a map $\mathrm{trop} : \mathbf{G}_m^{\mathrm{an}} \to \mathbb{R}$.  
 We define a section $\sigma : \mathbb{R} \to \mathbf{G}_m^{\mathrm{an}}$ of the restriction of the tropicalization map to $\mathbf{G}_m^{\mathrm{an}}$
  by mapping 
 $r \in \mathbb{R}$ to the point $\eta_{0,-\mathrm{exp}(r)}$ (cf. 2.2.1). 
 
\begin{defi}
  \emph{Let $A$ be a standard open annulus, a standard closed annulus or a standard punctured open disk. The} skeleton 
  of $A$ \emph{denoted $\Sigma(A)$ is the set $\sigma(\mathbb{R}) \cap A$.}    
\end{defi} 

\begin{es}
\begin{enumerate}
\item \emph{If $r_1, r_2 \in |k^*|$ with $r_1 < r_2$ then the skeleton of the standard open annulus $\mathbf{S}(r_1,r_2)_+$ is the 
 set $\sigma((-\mathrm{log}(r_2),-\mathrm{log}(r_1)))$.} 
 \item \emph{If $r_1, r_2 \in |k^*|$ with $r_1 \leq r_2$ then the skeleton of the standard closed annulus $\mathbf{S}(r_1,r_2)$ is the 
 set $\sigma([-\mathrm{log}(r_2),-\mathrm{log}(r_1)])$.}
 \end{enumerate} 
 \end{es} 

Following \cite{BPR}, we introduce the following definition to 
distinguish those properties of the standard analytic domains above and their skeleta which are 
 invariant under isomorphism. 
 
 \begin{defi} 
  \emph{A} general closed disk (resp. general
   closed annulus, resp. general open annulus, 
   resp. general open disk, resp. general punctured Berkovich open disk) \emph{is an analytic space that is isomorphic to 
  a standard closed disk (resp. standard closed annulus, resp. standard open annulus, resp. standard open disk, resp. standard punctured open disk)}. 
 \end{defi}  

\begin{prop} [\cite{BPR}, 2.8]
  Let $A,A'$ be standard closed annuli or open annuli or punctured open disks. Let $\phi : A \to A'$ be an isomorphism. 
  Then $\Sigma(A) = \phi^{-1}(\Sigma(A'))$. 
\end{prop} 

\begin{defi}
  \emph{Let $A$ be a general open annulus (resp. general closed annulus, resp. general punctured Berkovich open disk). Let 
  $A'$ be a standard open annulus (resp. standard closed annulus, resp. standard punctured Berkovich open disk) such that 
there exists an isomorphism of analytic spaces $\phi : A \to A'$. The} skeleton $\Sigma(A)$ of $A$ \emph{is the set 
$\phi^{-1}(\Sigma(A'))$. The skeleton of $A$ is well defined by Proposition 2.5. }   
\end{defi}
 
      When $A$ is a general open or closed annulus or general punctured open disk, the skeleton $\Sigma(A)$ can be identified with a real interval 
      upto linear transformations of the form $x \mapsto x + \mathrm{val}(\alpha)$ for some 
      $\alpha \in k^*$. 
      The skeleton $\Sigma(A)$ is endowed with the structure of a metric space.   
  
  We introduce the notion of a semistable vertex set of a smooth, projective curve 
 and then generalize this notion to the case of any curve $C$ over $k$. As above, 
 given a semistable vertex set 
 we associate to it a closed subspace 
 called its skeleton.
 We then show that the homotopy type of $C^{\mathrm{an}}$ is determined by such skeleta. 
What follows is inspired by the treatment in [\cite{aminibaker}, 4.4], [\cite{HL}, Section 7] and [\cite{BPR}, Section 5]. 
     
\begin{defi}
     \emph{   Let $C$ be a smooth, projective, irreducible curve defined over the field $k$ and $C^{\mathrm{an}}$ be its analytification.}
 A semistable vertex set $\mathfrak{V}$ for $C^{\mathrm{an}}$ \emph{is a finite collection of type II points such that if $\mathcal{C}$ denotes the 
 set of connected components of $C^{\mathrm{an}} \smallsetminus \mathfrak{V}$ then there exists a finite subset $S \subset \mathcal{C}$ such that 
every $A \in S$ is isomorphic to a standard open annulus whose inner and outer radius belong to $|k^*|$ 
 and every $A \in \mathcal{C} \smallsetminus S$ is isomorphic to 
the standard open disk of unit radius $\mathbf{O}(1)$. 
Such a decomposition of the space $C^{\mathrm{an}} \smallsetminus \mathfrak{V}$ is called a} semistable decomposition. 
\end{defi}
   The existence of semistable vertex sets in $C^{\mathrm{an}}$ follows from Section 4 in \cite{BPR}. 
\\

\begin{defi}
 An abstract finite metric graph \emph{comprises the following data: A finite set of} vertices \emph{$W$, a set of}
 edges \emph{$E \subset W \times W$
which is symmetric and a function $l:E \to \mathbb{R}_{> 0} \cup \infty$ such that if $(x,y) \in E$ then
$l(x,y) = l(y,x)$}. 
\end{defi}
 The function $l$ is called the length function. 
 \begin{defi}
  A finite metric graph $G$ \emph{is the geometric realisation 
 of an abstract finite metric graph $(V,E,l)$ in which every edge $e$ can be identified with a real interval of length $l(e)$.} 
The genus $g(G)$ of the graph $G$ \emph{is defined to be the number $1 - \mathrm{card}(V) + \mathrm{card}(E)$.} 
\end{defi}
     It can be verified that if $G$ is a graph which is the geometric realisation of two abstract finite metric graphs 
     $(V,E,l)$ and $(V',E',l')$ then $1 - \mathrm{card}(V) + \mathrm{card}(E) = 1 - \mathrm{card}(V') + \mathrm{card}(E')$.

\begin{defi} 
\emph{Let $C$ be a smooth projective irreducible curve over $k$.} 
The skeleton associated to a semistable vertex set $\mathfrak{V}$ in $C^{\mathrm{an}}$ \emph{is defined to be  
the union of the skeleta of all open annuli which occur in the semistable
decomposition along with the vertex set $\mathfrak{V}$.
It is denoted $\Sigma(C^{\mathrm{an}},\mathfrak{V})$.}
\end{defi}
Let $C$ be as in the definition above.
The skeleton $\Sigma(C^{\mathrm{an}},\mathfrak{V})$ can be seen as a graph whose edges correspond 
to the closures of the skeleta of the generalized open annuli that occur in the semistable decomposition associated to 
the set $\mathfrak{V}$. 
 The space $C^{\mathrm{an}}$ is pathwise connected. Hence 
for any two points $v_1,v_2 \in \mathfrak{V}$, there exists a
path between them. From the nature of the semistable 
decomposition, each such path can be taken to be  the union of a finite number of edges of  
the skeleton $\Sigma(C^{\mathrm{an}},\mathfrak{V})$. 
It follows that $\Sigma(C^{\mathrm{an}},\mathfrak{V})$ is pathwise connected.
By Corollary 2.6 in \cite{BPR}, the modulus of the skeleton of every open annulus which occurs in the semistable
decomposition defines a length function on the set of edges of $\Sigma(C^{\mathrm{an}},\mathfrak{V})$.
The skeleton 
$\Sigma(C^{\mathrm{an}},\mathfrak{V})$ is thus a finite, metric graph.
Let $x,y \in \Sigma(C^{\mathrm{an}},\mathfrak{V})$ and $P$ be an injective path from $x$ to $y$.
The path $P$ can be seen as the finite union of injective closed paths $\bigcup_i P_i$ such that 
for every $i$, $P_i$ is contained in the closure of the skeleton of a general open annulus
 which occurs in the semistable decomposition associated to 
$\mathfrak{V}$. The skeleton of such an open annulus is a metric space,
 and its metric extends to its closure in $C^{\mathrm{an}}$. It follows that the 
 length $l(P_i)$ of the path $P_i$ is well defined.
 For instance, if $P_i$ is an injective path from $x_i$ to $y_i$ then $l(P_i) := d(x_i,y_i)$ where 
 $d$ is the metric on the skeleton of the closure of the open annulus that contains $x_i$ and $y_i$. 
  We set $l(P) := \Sigma_i l(P_i)$.  
The graph $\Sigma(C^{\mathrm{an}},\mathfrak{V})$ can be given the structure of a metric space 
by defining the distance between two points $x,y$ in $\Sigma(C^{\mathrm{an}},\mathfrak{V})$ to be 
$\mathrm{min}_{P \in \mathcal{P}(x,y)} \{l(P)\}$ where $\mathcal{P}(x,y)$ is the set of injective paths between $x$
 and $y$.  
This defines a metric on 
$\Sigma(C^{\mathrm{an}},\mathfrak{V})$. 
  
   Let $\mathfrak{V}'$ be a semistable vertex set that contains $\mathfrak{V}$. 
By Propositions 3.13 and 5.3 in \cite{BPR},
 $\Sigma(C^{\mathrm{an}},\mathfrak{V}) \subseteq \Sigma(C^{\mathrm{an}},\mathfrak{V}'$ and 
 the inclusion is an isometry. Let $\mathbf{H}_0(C^{\mathrm{an}})$ denote the set of points in $C^{\mathrm{an}}$ of 
 type II or III. By Corollary 5.1 in loc.cit., 
 
 \begin{align*}
    \mathbf{H}_0(C^{\mathrm{an}}) = \varinjlim_{\mathfrak{V}} \Sigma(C^{\mathrm{an}},\mathfrak{V}). 
 \end{align*} 
  
     The limit in the above equation is taken over the family of semistable vertex sets $\mathfrak{V}$ in $C^{\mathrm{an}}$. 
  As each of the $\Sigma(C^{\mathrm{an}},\mathfrak{V})$ are metric spaces and the inclusions in the inductive limit are isometries, 
  we have a metric on the space $\mathbf{H}_0(C^{\mathrm{an}})$ which is called its skeletal metric. By Corollary 5.7 in \cite{BPR}, 
  this metric extends in a unique way to the space $\mathbf{H}(C^{\mathrm{an}}) := C^{\mathrm{an}} \smallsetminus C(k)$.

\subsubsection{ The tangent space at a point on $C^{\mathrm{an}}$ }
      Let $C$ be a smooth projective irreducible $k$-curve.         
We begin with the notion of a geodesic segment in a metric space $T$. 

\begin{defi}
 \emph{A} geodesic segment from 
 $x$ to $y$ in a metric space 
 $T$ \emph{is the image of an isometric embedding $[a,b] \to T$
  with $[a,b] \subset \mathbb{R}$ and $a \mapsto x$, $b \mapsto y$. We often
   identify a geodesic segment with its image in $T$ and denote 
   it $[x,y]$. }
\end{defi} 

Let $p \in \mathbf{H}(C^{\mathrm{an}})$. A \emph{non trivial geodesic segment starting at $p$} is 
a geodesic segment $\alpha : [0,a] \hookrightarrow \mathbf{H}(C^{\mathrm{an}})$ such that $a > 0$ and
$\alpha(0) = p$. We say that two non trivial geodesic segments starting from $p$ are \emph{equivalent at $p$} if 
they agree in a neighborhood of zero. If $\alpha$ is a geodesic segment starting at the point $p$ then 
we refer to the equivalence class defined by $\alpha$ as its \emph{germ}. These notions can be adapted 
to the case $p \in C(k)$ as follows.  A \emph{non trivial geodesic segment starting at $p$} is 
an embedding $\alpha : [\infty,a] \hookrightarrow C^{\mathrm{an}}$ such that $a < \infty$, $\alpha(\infty) = p$,
$\alpha((\infty,a]) \subset \mathbf{H}(C^{\mathrm{an}})$ and the restriction $\alpha_{|(\infty,a]}$ is an isometry. 
As before, we say that two non trivial geodesic segments starting from $p$ are \emph{equivalent at $p$} if 
they agree in a neighborhood of $\infty$ and if $\alpha$ is a geodesic segment starting at the point $p$ then 
we refer to the equivalence class defined by $\alpha$ as its \emph{germ}.
We now define the tangent space at a point $C^{\mathrm{an}}$.  

\begin{defi}
  \emph{Let $x \in C^{\mathrm{an}}$}. 
The tangent space at $x$ \emph{denoted $T_x$ is the set of non trivial geodesic segments starting from $x$ 
upto equivalence at $x$.}   
\end{defi}
  
  Let $x \in C^{\mathrm{an}}$. The tangent space at $x$ depends solely on a neighborhood of $x$. Following Sections 4 and 5 of \cite{BPR}, 
  we introduce the concept of a simple neighborhood $U$ of $x$ and state the result which relates the tangent space $T_x$ to 
  $\pi_0(U \smallsetminus x)$. 
  
\begin{prop} (\cite{BPR}, Corollary 4.27)
Let $C$ be a smooth projective irreducible $k$-curve.
 Let $x \in C^{\mathrm{an}}$. There is a fundamental system
 of open neighborhoods $\{U_\alpha\}$ of $x$ of the following form:
 \begin{enumerate}
 \item If $x$ is a type-I or a type-IV point then the $U_{\alpha}$ are open balls.
 \item  If $x$ is a type-III point then the $U_{\alpha}$ are open annuli with $x \in \Sigma(U_\alpha)$.
 \item  If $x$ is a type-II point then $U_\alpha = \tau^{-1}(W_\alpha)$ where $W_\alpha$ is a simply-connected open neighborhood of 
 $x$ in $\Sigma(C^{\mathrm{an}},\mathfrak{V})$ for some semistable vertex set $\mathfrak{V}$ of $C^{\mathrm{an}}$ that contains $x$
  and 
  $\tau : C^{\mathrm{an}} \to C^{\mathrm{an}}$ is defined by $x \mapsto \lambda_{\Sigma(C^{\mathrm{an}},\mathfrak{V})}(1,x)$ (Proposition 2.21).
  Each $U_{\alpha} \smallsetminus \{x\}$ is a disjoint union of open balls and open annuli
 \end{enumerate} 
\end{prop} 

\begin{defi} [\cite{BPR}, Definition 4.28]
\emph{Let $C$ be a smooth projective irreducible $k$-curve. 
A neighborhood of $x \in C^{\mathrm{an}}$ of the form described in Proposition 2.13 is called a} simple neighborhood of $x$.
  \end{defi}
  
  The following proposition is a minor modification of Lemma 5.12 of \cite{BPR} to include 
  points of type I as well.  
  
\begin{prop} Let $x \in C^{\mathrm{an}}$ and let $U$ be a 
simple neighborhood of 
$x$ in $C^{\mathrm{an}}$. Then $[x,y] \mapsto y$ 
establishes a bijection $T_x \to \pi_0(U \smallsetminus \{x\})$. Moreover,
\begin{enumerate}
\item If $x$ is of type I, IV then there is only one tangent direction at $x$.
\item If $x$ has type III then there are two tangent directions at $x$.
\item If $x$ has type II then $U = \mathrm{red}^{-1}(E)$ for a smooth irreducible component $E$ of the 
special fiber of a semistable formal model $\mathfrak{C}$ of $C$ by (cf. 4.29 loc.cit.) and $T_x \tilde{\to}  \pi_0(U \smallsetminus \{x\}) \tilde{\to} E(\tilde{k})$.
\end{enumerate} 
  \end{prop} 
  
  \begin{rem}
  It should be pointed out that the notation $[x,y]$ was introduced only when $x,y \in \mathbf{H}(C^{\mathrm{an}})$. 
  When $x \in C^{\mathrm{an}}$ is a point of type I and $\alpha : [\infty,a] \hookrightarrow C^{\mathrm{an}}$ is a geodesic segment 
  starting from $x$, by $[x,\alpha(a)]$ we mean the image of the embedding $\alpha([\infty,a])$.  
   \end{rem} 
  
  Let $\rho : C' \to C$ be a finite morphism between smooth projective $k$-curves.
   If $x' \in C'^{\mathrm{an}}$ then the tangent space at $x'$ maps to the tangent space 
at $\rho(x')$ in an obvious fashion.
Suppose $x$ was not of type I. 
 Let $\lambda :[0,1] \to C'^{\mathrm{an}}$ be a representative of a 
 point on the tangent space at $x'$. 
 Let $U$ be a simple neighborhood (cf. Definition 2.14) of the point $\rho(x')$. We can find $a > 0$ such that 
 $\rho \circ \lambda((0,a])$ lies in a connected component of the space 
 $U \smallsetminus \rho(x')$. This connected component which contains $\rho \circ \lambda((0,a])$ depends only on the equivalence class
 of $\lambda$ i.e. on the element of the tangent space that is represented by $\lambda$. A similar argument can be used when $x' \in C'(k)$.
  By 2.15, we have thus defined a map
\begin{align*}
  d\rho_{x'} :& T_{x'} \to T_{\rho(x')} 
\end{align*}

\subsection{Weak semistable vertex sets} 

  Let $C$ be a smooth projective irreducible $k$-curve and let $\mathfrak{V}$ be a semistable vertex set in $C^{\mathrm{an}}$. 
  Recall that we defined the skeleton associated to $\mathfrak{V}$ and denoted it $\Sigma(C^{\mathrm{an}},\mathfrak{V})$. 
Observe that by construction, the connected components of the space $C^{\mathrm{an}} \smallsetminus \Sigma(C^{\mathrm{an}}, \mathfrak{V})$  
are isomorphic to Berkovich open balls. If $C$ was not smooth or not complete then there does not exist a finite set of type II points 
$\mathfrak{V} \subset C^{\mathrm{an}}$
such that $C^{\mathrm{an}}$ decomposes into the disjoint union of general open annuli and general open disks. However, we 
can find a finite set of points $\mathfrak{V}$ in $C^{\mathrm{an}}$ and as before define a finite graph 
$\Sigma(C^{\mathrm{an}},\mathfrak{V})$ such that the space $C^{\mathrm{an}} \smallsetminus \Sigma(C^{\mathrm{an}},\mathfrak{V})$ 
is the disjoint union of general open disks. It is with this goal in mind that we introduce the notion of weak semistable vertex sets, first 
for smooth projective irreducible curves and then for any $k$-curve.

\begin{defi} \emph{Let $C$ be a smooth projective irreducible $k$-curve.} 
  A weak semistable vertex set $\mathfrak{W}$ in $C^{\mathrm{an}}$ \emph{is defined to be 
a finite collection of points of type I or II in $C^{\mathrm{an}}$ such that if $\mathcal{C}$ denotes the 
 set of connected components of $C^{\mathrm{an}} \smallsetminus \mathfrak{W}$ then there exists a finite subset $S \subset \mathcal{C}$ such that 
every $A \in S$ is isomorphic to a standard open 
annulus or a standard punctured Berkovich open unit disk and every $A \in \mathcal{C} \smallsetminus S$ is isomorphic to 
a standard Berkovich open unit disk.} 
\end{defi}   

     As before, we define the skeleton
$\Sigma(C^{\mathrm{an}},\mathfrak{W})$ 
 associated to such a set. Let $\Sigma(C^{\mathrm{an}},\mathfrak{W})$ be the union of 
$\mathfrak{W}$ and the skeleton of
 every open annulus and punctured open disk in the decomposition of $C^{\mathrm{an}} \smallsetminus \mathfrak{W}$. 
The closed subspace $\Sigma(C^{\mathrm{an}},\mathfrak{W})$ 
is homeomorphic to a connected, finite metric graph whose length function is not necessarily finite by which we mean that there 
could be edges of length $\infty$. 

     We generalise this notion of weak semistable vertex sets to the case of curves over $k$. 

\begin{rem}Let $C$ be a $k$-curve. Let $j : C \hookrightarrow \bar{C}$ be a dense open immersion where $\bar{C}$ is projective over $k$.  
   The pair $(j,\bar{C})$ is called a compactification of $C_{/k}$.  
   Let $F := \bar{C} \smallsetminus C$. We know that $F$ is a finite set of points and 
   $C^{\mathrm{an}} = \bar{C}^{\mathrm{an}} \smallsetminus F$. 
Let $\bar{C}_i$ denote the irreducible components of $\bar{C}$ and $\bar{C'}_i$ denote their respective normalisations. 
The canonical morphisms $\bar{C'}_i \to \bar{C}$ define a morphism $\rho_{\bar{C}} : \bigcup_i \bar{C}'_i \to \bar{C}$.  
\end{rem}

        We make use of the notation introduced in Remark 2.18 in the definition that follows.

  \begin{defi} \emph{Let $C$ be a $k$-curve.
Let $\bar{C}$ be a compactification of $C$.} 
   A weak semistable vertex set $\mathfrak{W}$ for $C^{\mathrm{an}}$ \emph{ is a finite collection of points of type I or II,
 in $\bar{C}^{\mathrm{an}}$   
   such that} \emph{
   \begin{enumerate} 
    \item The set $\mathfrak{W}$ contains the set of singular points of $\bar{C}$ and the points $\bar{C} \smallsetminus C$.
   \item 
$\rho_{\bar{C}}^{-1}(\mathfrak{W}) \cap \bar{C}'_i$ is a weak semistable vertex set of the irreducible smooth projective curve $\bar{C}'_i$. 
   \end{enumerate} }
\end{defi} 

  As the above definition requires a compactification $j : C \hookrightarrow \bar{C}$, we should have said a weak 
  semistable set for the pair $(C^{\mathrm{an}}, j : C \hookrightarrow \bar{C})$. However, we abbreviate notation and refer 
  to a set $\mathfrak{W}$ which satisfies the conditions of the above definition, simply as a weak semistable vertex set for $C^{\mathrm{an}}$.  
 
 As before, we define the skeleton associated to a weak semistable vertex set for $C^{\mathrm{an}}$ as follows. 
   \begin{defi} 
     \emph{Let $C$ be a $k$-curve and $\bar{C}$ be a compactification of $C_{/k}$. Let $\mathfrak{W}$ be a 
     weak semistable vertex set for $C^{\mathrm{an}}$.
     Let $\mathfrak{W}'_i := \rho_{\bar{C}}^{-1}(\mathfrak{W}) \cap \bar{C}'^{\mathrm{an}}_i$.     
  We define} the skeleton 
 associated to $\mathfrak{W}$ to be  
  $\Sigma(C^{\mathrm{an}},\mathfrak{W}) := [\bigcup_i \rho_{\bar{C}}(\Sigma(\mathfrak{W}'_i,{\bar{C'}_i}^{\mathrm{an}}))] \cap C^{\mathrm{an}}$. 
\end{defi}
        It can be verified directly from the definition of the skeleta 
      $\Sigma(C^{\mathrm{an}},\mathfrak{W})$ associated to 
      $\mathfrak{W}$ that the space $C^{\mathrm{an}} \smallsetminus \Sigma(C^{\mathrm{an}},\mathfrak{W})$ decomposes into the disjoint union of 
      sets each of which are isomorphic
      as analytic spaces to
      the 
      Berkovich open disk $\mathbf{O}(0,1)$.

\begin{prop} Let $C$ be a $k$-curve. 
Let $\mathfrak{V} \subset \mathfrak{W}$ be weak semistable vertex sets of $C^{\mathrm{an}}$.
 There exists a deformation retraction 
 $\lambda_{\Sigma(C^{\mathrm{an}},\mathfrak{V})} : [0,1] \times C^{\mathrm{an}} \to C^{\mathrm{an}}$ whose image is the skeleton 
 $\Sigma(C^{\mathrm{an}},\mathfrak{V})$ and a deformation retraction $\lambda_{\Sigma(C^{\mathrm{an}},\mathfrak{V})}^{\mathfrak{W}}$ with image 
 $\Sigma(C^{\mathrm{an}},\mathfrak{W})$. \emph{(The image of a deformation retraction $\lambda : [0,1] \times C^{\mathrm{an}} \to C^{\mathrm{an}}$ is the 
 set $\lambda(1,C^{\mathrm{an}}) := \{\lambda(1,p) | p \in C^{\mathrm{an}} \}$).}  
 \end{prop} 
\begin{proof} 
    We begin by constructing a deformation retraction 
    $\lambda_{\Sigma(C^{\mathrm{an}},\mathfrak{V})} : [0,1] \times C^{\mathrm{an}} \to C^{\mathrm{an}}$ with 
    image $\lambda_{\Sigma(C^{\mathrm{an}},\mathfrak{V})}(1,C^{\mathrm{an}}) = \Sigma(C^{\mathrm{an}},\mathfrak{V})$.
    
    Let $\mathcal{D}$ denote the set of connected components of the space $C^{\mathrm{an}} \smallsetminus \Sigma(C^{\mathrm{an}},\mathfrak{V})$.
    By definition, each element $D \in \mathcal{D}$ is isomorphic to the Berkovich open ball $\mathbf{O}(0,1)$. 
     We fix isomorphisms $\rho_D : D \to \mathbf{O}(0,1)$ for every $D \in \mathcal{D}$. 
     
           We define $\lambda_{\Sigma(C^{\mathrm{an}},\mathfrak{V})} : [0,1] \times C^{\mathrm{an}} \to C^{\mathrm{an}}$ as follows. 
           For $p \in \Sigma(C^{\mathrm{an}},\mathfrak{V})$, we set $\lambda_{\Sigma(C^{\mathrm{an}},\mathfrak{V})}(t,p) := p$ for every $t \in [0,1]$. 
           Let $p \in C^{\mathrm{an}} \smallsetminus \Sigma(C^{\mathrm{an}},\mathfrak{V})$. There exists 
           $D \in \mathcal{D}$ such that $p \in D$. 
           Suppose, $p$ was not a type IV point. 
           By 2.1.1, there 
           exists $a \in k$ and $r \in [0,1)$ such that 
          $\rho_D(p) = \eta_{a,r}$.  
       When $r = 0$, we maintain that $\eta_{a,r}$ is the point $a$. For $t \in [0,r]$, we set 
       $\lambda_{\Sigma(C^{\mathrm{an}},\mathfrak{V})}(t,p) := p$ and when $t \in (r,1)$, let $\lambda_{\Sigma(C^{\mathrm{an}},\mathfrak{V})}(t,p) := \rho_D^{-1}(\eta_{a,t})$.  
       Lastly, let $\lambda_{\Sigma(C^{\mathrm{an}},\mathfrak{V})}(1,p) := \bar{D} \cap \Sigma(C^{\mathrm{an}},\mathfrak{V})$ where $\bar{D}$ is the 
       closure of $D$ in $C^{\mathrm{an}}$. 
       When $p$ is of type IV, $\rho_D(p)$ corresponds to the semi-norm associated to a nested sequence of closed disks 
       $\{B(x_i,u_i) \subset k \}_i$ whose intersection is empty. Let $u := \mathrm{lim}_i(u_i)$. For any $t > u$, there exists a unique closed disk 
       $B(x,t)$ such that its analytification $\mathbf{B}(x,t)$ contains the point $\rho_D(p)$. We set $\lambda_{\Sigma(C^{\mathrm{an}},\mathfrak{V})}(t,p) := p$ when $t \in [0,u]$ and 
       $\lambda_{\Sigma(C^{\mathrm{an}},\mathfrak{V})}(t,p) := \rho_D^{-1}(\eta_{x,t})$ for $t \in (u,1)$ where $\mathbf{B}(x,t) \subset \mathbf{O}(0,1)$ is the unique Berkovich closed disk
       of radius $t$
        that contains 
       the point $\rho_D(p)$. As before, let $\lambda_{\Sigma(C^{\mathrm{an}},\mathfrak{V})}(1,p) := \bar{D} \cap \Sigma(C^{\mathrm{an}},\mathfrak{V})$.

            The function $\lambda_{\Sigma(C^{\mathrm{an}},\mathfrak{V})} : [0,1] \times C^{\mathrm{an}} \to C^{\mathrm{an}}$ is well defined and the only 
            points $p$ in $C^{\mathrm{an}}$ such that $\lambda_{\Sigma(C^{\mathrm{an}},\mathfrak{V})}(t,p) = p$ for every $t \in [0,1]$ are those points which belong to 
            $\Sigma(C^{\mathrm{an}},\mathfrak{V})$. Furthermore, 
            $\lambda_{\Sigma(C^{\mathrm{an}},\mathfrak{V})}(1,C^{\mathrm{an}}) = \Sigma(C^{\mathrm{an}},\mathfrak{V})$. 
            We show that $\lambda_{\Sigma(C^{\mathrm{an}},\mathfrak{V})}$ is continuous when $[0,1] \times C^{\mathrm{an}}$ is endowed with the 
            product topology. 
             Let $W \subset C^{\mathrm{an}}$ be a connected open set. If $W$ is disjoint from 
            $\Sigma(C^{\mathrm{an}},\mathfrak{V})$ then it must be contained in some $D \in \mathcal{D}$
            and $\lambda^{-1}(W)$ does not intersect $\{1\} \times C^{\mathrm{an}} \subset [0,1] \times C^{\mathrm{an}}$. 
            We show that $\lambda_{\Sigma(C^{\mathrm{an}},\mathfrak{V})}^{-1}(W)$ is 
            open in $[0,1] \times C^{\mathrm{an}}$. 
             As $\lambda_{\Sigma(C^{\mathrm{an}},\mathfrak{V})}([0,1) \times D) \subseteq D$, the 
             map ${\lambda_{\Sigma(C^{\mathrm{an}},\mathfrak{V})}}_{|[0,1) \times D}$ defines a map 
             $\lambda' : [0,1) \times \mathbf{O}(0,1) \to \mathbf{O}(0,1)$ given by 
             $\lambda'(t,x) := \rho_D(\lambda_{\Sigma(C^{\mathrm{an}},\mathfrak{V})}(t,\rho^{-1}_D(x)))$ for $t \in [0,1)$ and $x \in \mathbf{O}(0,1)$. 
            We need only check that $(\lambda'_D)^{-1}(\rho_D(W))$ is
            open in $\mathbf{O}(0,1)$. This can be verified using the explicit description of connected open sets in $\mathbf{O}(0,1)$
             provided by Lemma 2.32. 
             Let $W$ be a connected open set which intersects $\Sigma(C^{\mathrm{an}},\mathfrak{V})$ in an open set $W'$. 
             Let $\mathcal{D'} := \{D \in \mathcal{D} | \bar{D} \cap \Sigma(C^{\mathrm{an}},\mathfrak{V}) \subset W'\}$. 
             The semistable decomposition of $C^{\mathrm{an}}$ and the connectedness of $W$ imply that it must be contained 
             in $\bigcup_{D \in \mathcal{D}'} D \cup W'$. We can decompose $W$ as the disjoint union $\bigcup_{D \in \mathcal{D}'} (W \cap D) \bigcup W'$.  
             The set $\lambda_{\Sigma(C^{\mathrm{an}},\mathfrak{V})}^{-1}(W \cap D)$ is open in $[0,1] \times C^{\mathrm{an}}$ for every $D \in \mathcal{D}'$. 
             By construction $\lambda_{\Sigma(C^{\mathrm{an}},\mathfrak{V})}^{-1}(W') = ([0,1] \times W') \cup (\{1\} \times (\bigcup_{D \in \mathcal{D}'} D))$. 
             We show that every point in $\lambda_{\Sigma(C^{\mathrm{an}},\mathfrak{V})}^{-1}(W')$ has an open neighbourhood contained in 
             $\lambda_{\Sigma(C^{\mathrm{an}},\mathfrak{V})}^{-1}(W)$. 
             It can be verified that $[0,1] \times W \subset \lambda_{\Sigma(C^{\mathrm{an}},\mathfrak{V})}^{-1}(W)$.  
             Observe that the set $[0,1] \times W \subset \lambda_{\Sigma(C^{\mathrm{an}},\mathfrak{V})}^{-1}(W)$ forms an open neighborhood of every 
             point in 
             $[0,1] \times W'$. It remains to show that every point in 
             $\{1\} \times (\bigcup_{D \in \mathcal{D}'} D)$ has an open neighbourhood contained in $\lambda_{\Sigma(C^{\mathrm{an}},\mathfrak{V})}^{-1}(W)$.
             Let $x \in D$ for some $D \in \mathcal{D}'$. 
             As $W$ is connected and $W$ is an open neighborhood of $\overline{D} \smallsetminus D$, we must have that $W \cap D$ is 
             connected as well. By Remark 2.31, we can reduce to the case when $W \cap D$ is 
             the complement in $D$ of the union of a finite number of Berkovich closed disks and points of types I and IV. 
             It follows that there exists $r_D$ such that $(r_D,1] \times D \subset \lambda_{\Sigma(C^{\mathrm{an}},\mathfrak{V})}^{-1}(W)$. The set 
             $(r_D,1] \times D$ is an open neighborhood of $(1,x)$ contained in 
             $\lambda_{\Sigma(C^{\mathrm{an}},\mathfrak{V})}^{-1}(W)$.
             We have thus proved that $\lambda_{\Sigma(C^{\mathrm{an}},\mathfrak{V})} : [0,1] \times C^{\mathrm{an}} \to C^{\mathrm{an}}$
              is continuous and hence a deformation retraction. 
             
             We now prove the second part of the proposition. 
              We define a deformation retraction 
                $\lambda_{\Sigma(C^{\mathrm{an}},\mathfrak{V})}^\mathfrak{W} : [0,1] \times C^{\mathrm{an}} \to C^{\mathrm{an}}$ with image 
                $\Sigma(C^{\mathrm{an}},\mathfrak{W})$ as follows.
               For $p \in C^{\mathrm{an}}$, let $s_p \in [0,1]$ be the smallest 
               real number such that $\lambda_{\Sigma(C^{\mathrm{an}},\mathfrak{V})}(s_p,p) \in \Sigma(C^{\mathrm{an}},\mathfrak{W})$. 
                We define $\lambda_{\Sigma(C^{\mathrm{an}},\mathfrak{V})}^\mathfrak{W}$ as follows. For $p \in C^{\mathrm{an}}$, 
                $\lambda_{\Sigma(C^{\mathrm{an}},\mathfrak{V})}^\mathfrak{W}(t,p) := \lambda_{\Sigma(C^{\mathrm{an}},\mathfrak{V})}(t,p)$ when $t \in [0,s_p]$ and 
                $\lambda_{\Sigma(C^{\mathrm{an}},\mathfrak{V})}^\mathfrak{W}(t,p) := \lambda_{\Sigma(C^{\mathrm{an}},\mathfrak{V})}(s_p,p)$ when $t \in (s_p,1]$. 
                Using arguments as before, it can be checked that $\lambda_{\Sigma(C^{\mathrm{an}},\mathfrak{V})}^\mathfrak{W}$
                 is indeed a deformation retraction with image $\Sigma(C^{\mathrm{an}},\mathfrak{W})$. 
  \end{proof} 
  
\begin{rem} \emph{Recall that if $C$ is a smooth projective $k$-curve then the space $\mathbf{H}(C) := C^{\mathrm{an}} \smallsetminus C(k)$ is a metric 
     space.
     We can hence define isometries $\alpha : [a,b] \hookrightarrow \mathbf{H}(C)$ where $[a,b] \subset \mathbb{R}$. 
      This fact can be generalized to any $k$-curve. Let $C$ be a $k$-curve. By Remark 2.18, there exists a finite set of 
     smooth projective curves $\{\bar{C}'_i\}$ such that $\bigcup_i \mathbf{H}((\bar{C}'_i)^{\mathrm{an}}) = \mathbf{H}(C^{\mathrm{an}}) := C^{\mathrm{an}} \smallsetminus C(k)$. 
     We say that a continuous function $\alpha : [a,b] \hookrightarrow \mathbf{H}(C^{\mathrm{an}})$ is an \emph{isometry} if 
     $\alpha([a,b]) \subset \mathbf{H}((\bar{C}'_i)^{\mathrm{an}})$ for some $i$ and $\alpha : [a,b] \hookrightarrow \mathbf{H}((\bar{C}'_i)^{\mathrm{an}})$
     is an isometry.}
      
     \emph{Let the notation be as in Proposition 2.21. 
      For $p \in C^{\mathrm{an}}$, let $(\lambda_{\Sigma(C^{\mathrm{an}},\mathfrak{V})}^{\mathfrak{W}})^p : [0,1] \to C^{\mathrm{an}}$ be the path defined 
 by $t \mapsto \lambda_{\Sigma(C^{\mathrm{an}},\mathfrak{V})}^{\mathfrak{W}}(t,p)$. 
  Observe that the deformation retraction
  $\lambda_{\Sigma(C^{\mathrm{an}},\mathfrak{V})}^{\mathfrak{W}}$
   is such that if $a,b \in [0,1]$ and $p \in \mathbf{H}(C^{\mathrm{an}})$ then there exists 
 $a_1 < b_1 \in [a,b]$ such that $(\lambda_{\Sigma(C^{\mathrm{an}},\mathfrak{V})}^{\mathfrak{W}})^p$ is constant on the segments $[a,a_1]$ and 
 $[b_1,b]$ and $(\lambda_{\Sigma(C^{\mathrm{an}},\mathfrak{V})}^{\mathfrak{W}})^p \circ -\mathrm{exp} : [-\mathrm{log}(a_1), -\mathrm{log}(b_1)] \to \mathbf{H}(C^{\mathrm{an}})$
 is an isometry.}
  \end{rem}

\begin{defi}
  \emph{ Let $C$ be a $k$-curve. 
    Let $\mathfrak{V} \subset \mathfrak{W}$ be weak semistable vertex sets. Let 
 $\lambda : [0,1] \times C^{\mathrm{an}} \to C^{\mathrm{an}}$ be a deformation retraction with image 
 $\Sigma(C^{\mathrm{an}},\mathfrak{V})$. 
 For $p \in C^{\mathrm{an}}$, let $s_p \in [0,1]$ be the smallest real number such that 
 $\lambda(s_p,p) \in \Sigma(C^{\mathrm{an}},\mathfrak{W})$. A deformation retraction 
 $\lambda' : [0,1] \times C^{\mathrm{an}} \to C^{\mathrm{an}}$ is said to} extend the deformation retraction 
 $\lambda$ \emph{if for $p \in C^{\mathrm{an}}$, $\lambda'(t,p) = \lambda(t,p)$ when $t \in [0,s_p]$ and 
 $\lambda'(t,p) = \lambda(s_p,p)$ when $t \in (s_p,1]$.}
    \end{defi}  

\begin{rem}
     \emph{In the proof of Proposition 2.21, we constructed deformation retractions 
     $\lambda_{\Sigma(C^{\mathrm{an}},\mathfrak{V})}$ and $\lambda_{\Sigma(C^{\mathrm{an}},\mathfrak{V})}^\mathfrak{W}$. 
     Observe that $\lambda_{\Sigma(C^{\mathrm{an}},\mathfrak{V})}^\mathfrak{W}$ is an extension of 
    $\lambda_{\Sigma(C^{\mathrm{an}},\mathfrak{V})}$ by $\mathfrak{W}$.} 
 \end{rem}

     As outlined in the introduction, we show that given a 
  weak semistable vertex set $\mathfrak{W}$ of a complete curve $C$, the genus of the 
  finite graph $\Sigma(C^{\mathrm{an}},\mathfrak{W})$ is an invariant of the curve. 

\begin{prop} 
   Let $C$ be a complete $k$-curve and $\mathfrak{W}$ be a weak semistable vertex set in $C^{\mathrm{an}}$. 
   Let $\Upsilon \subset C^{\mathrm{an}}$ be a closed subset that does not contain any points of type IV and is a finite graph. Suppose
    that there exists a deformation retraction 
   $\lambda : [0,1] \times C^{\mathrm{an}} \to C^{\mathrm{an}}$ with image 
   $\lambda(1,C^{\mathrm{an}}) = \Upsilon$. We have that 
    \begin{align*}
      g(\Sigma(C^{\mathrm{an}},\mathfrak{W})) = g(\Upsilon).  
   \end{align*}
\end{prop}
  \begin{proof}
       Let $\psi : [0,1] \times C^{\mathrm{an}} \to C^{\mathrm{an}}$ be the deformation retraction associated to 
       the set $\mathfrak{W}$ with image $\Sigma(C^{\mathrm{an}},\mathfrak{W})$ as constructed in Proposition 2.21. 
     As the graph $\Upsilon$ is finite and does not contain any points of type IV, 
     we can find a weak semistable vertex set $\mathfrak{W}'$ such that $\Sigma(C^{\mathrm{an}},\mathfrak{W}')$
     contains $\Upsilon$. We can choose $\mathfrak{W}'$ so that $\mathfrak{W} \subset \mathfrak{W}'$. 
     The restriction 
     $\lambda : [0,1] \times \Sigma(C^{\mathrm{an}},\mathfrak{W}') \to C^{\mathrm{an}}$
   implies that $\Sigma(C^{\mathrm{an}},\mathfrak{W}')$ and $\Upsilon$ are homotopy equivalent. It follows that 
   $\Sigma(C^{\mathrm{an}},\mathfrak{W})$ is homotopic to $\Upsilon$ as 
   $\lambda_{\Sigma(C^{\mathrm{an}},\mathfrak{W})} : [0,1] \times \Sigma(C^{\mathrm{an}},\mathfrak{W}') \to \Sigma(C^{\mathrm{an}},\mathfrak{W}')$
   is a deformation retraction onto $\Sigma(C^{\mathrm{an}},\mathfrak{W})$.   
   Hence $g(\Sigma(C^{\mathrm{an}},\mathfrak{W})) = g(\Upsilon)$.    
  \end{proof}

   \begin{defi}   \emph{Let $C$ be a $k$-curve and $\bar{C}$ be a compactification of $C$. 
   Let $f$ denote the cardinality of the finite set of points $\bar{C}(k) \smallsetminus C(k)$. 
   We define}  
  $g^{\mathrm{an}}(C)$ \emph{to be $g(\Sigma(\bar{C}^{\mathrm{an}},\mathfrak{W})) + f$ where 
  $\Sigma(\bar{C}^{\mathrm{an}},\mathfrak{W})$ is the skeleton
   associated to a weak semistable 
  vertex set $\mathfrak{W}$ for $C^{\mathrm{an}}$.} 
      \end{defi}
      
   It can be checked easily that this definition does not depend on the compactification of $C$ chosen. 
 Proposition 2.25 implies that $g^{\mathrm{an}}(C)$ is a well defined invariant of the $k$-curve $C$.    
 We end this section with the following proposition concerning finite graphs. 
  
 \begin{prop}
  Let $\phi : C' \to C$ be a finite morphism of smooth projective irreducible curves. Let $H \subset C^{\mathrm{an}}$ be
  a finite graph which does not contain any points of type IV. We then have that $(\phi^{\mathrm{an}})^{-1}(H)$ is a finite graph.  
 \end{prop}  
  \begin{proof}
       We may suppose at the outset that the graph $H$ is connected. 
      We show that we may reduce to the case when the extension of function fields $k(C) \hookrightarrow k(C')$ 
      induced by $\phi$ is Galois. 
    The extension of function fields $k(C) \hookrightarrow k(C')$ decomposes into a pair of extensions 
    $k(C) \hookrightarrow L$ which is separable and $L \hookrightarrow k(C')$ which is purely inseparable. 
    Let $C_1$ denote the smooth projective irreducible curve that corresponds to the function field $L$. The morphisms 
    $C' \to C_1$ and $C'^{\mathrm{an}} \to C_1^{\mathrm{an}}$ are homeomorphisms.
     It follows that if the preimage of $H$ for the morphism $C_1^{\mathrm{an}} \to C^{\mathrm{an}}$ is a finite graph 
    then $(\phi^{\mathrm{an}})^{-1}(H)$ is a finite graph as well. We may hence suppose that $k(C) \hookrightarrow k(C')$ is 
    separable. Let $L'$ denote the Galois closure of the extension $k(C) \hookrightarrow k(C')$ and $C''$ be the smooth projective 
    irreducible curve that corresponds to the function field $L'$. We have a sequence of morphisms 
    $C'' \to C' \to C$. Let $\psi : C'' \to C'$. 
    If the preimage $H''$ of $H$ for the morphism $C''^{\mathrm{an}} \to C^{\mathrm{an}}$ is a finite graph then 
    its image $\psi^{\mathrm{an}}(H'') = (\phi^{\mathrm{an}})^{-1}(H)$ 
    for the morphism $\psi^{\mathrm{an}} : C''^{\mathrm{an}} \to C'^{\mathrm{an}}$ is a finite graph. 
    Indeed, the group $G' := \mathrm{Gal}(k(C'')/k(C'))$ acts on $H''$. The graph $H''$ is defined by combinatorial data 
    i.e. a finite set of vertices $W \subset C''^{\mathrm{an}}$ and a set of edges $E \subset W \times W$ which can be realized as
    subspaces of $C''^{\mathrm{an}}$. The group $G'$ must act on the sets $W$ and $E$. It follows that the quotient of $H''$ 
    for the action of the group $G'$ can be described in terms of the $G'$-orbits in $W$ and $E$. Hence $\psi^{\mathrm{an}}(H'')$ 
    is a finite graph. We have thus reduced to the case when the morphism $\phi : C' \to C$ is Galois. 
    
     Let $\mathfrak{W}$ be a semistable vertex set in $C'^{\mathrm{an}}$. 
     Let $\mathfrak{V}$ be a weak semistable vertex set in $C^{\mathrm{an}}$ that contains 
     $\phi^{\mathrm{an}}(\mathfrak{W})$ and is such that $\Sigma(C^{\mathrm{an}},\mathfrak{V})$ contains 
     the finite graph $H$.
     We may suppose in addition that $\mathfrak{V}$ was chosen so that
      the graph $\Sigma(C^{\mathrm{an}}, \mathfrak{V})$ contains no loop edges. 
      It suffices to prove the lemma for $H = \Sigma(C^{\mathrm{an}},\mathfrak{V})$. 
     We show that there exists a finite graph $H' \subset C'^{\mathrm{an}}$ such that $\phi^{\mathrm{an}}(H') = H$. 
     Let $\mathcal{C}$ denote the connected components of the space $C^{\mathrm{an}} \smallsetminus \mathfrak{V}$. 
     As $\mathfrak{V}$ is a weak semistable vertex set, 
     there exists a finite set $S \subset \mathcal{C}$ such that every $A \in S$ is isomorphic to a standard Berkovich punctured 
     open disk of unit radius 
     or a standard open annulus and if $A \in \mathcal{C} \smallsetminus S$ then $A$ is isomorphic to a standard Berkovich open disk. 
     Likewise, let $\mathcal{C}'$ denote the set of connected components of the space $C'^{\mathrm{an}} \smallsetminus \mathfrak{W}$. 
     As $\mathfrak{W}$ is a semistable vertex set, 
     there exists a finite set $S' \subset \mathcal{C}'$ such that every $A \in S'$ is isomorphic to a standard 
     open annulus and if $A \in \mathcal{C}' \smallsetminus S'$ then $A$ is isomorphic to a standard Berkovich open disk.
    
    Let $\mathfrak{V}' := (\phi^{\mathrm{an}})^{-1}(\mathfrak{V})$.
    The morphism $\phi^{\mathrm{an}}$ is surjective, open and closed (cf. 6.1). It follows that the restriction of $\phi^{\mathrm{an}}$ to 
    $C'^{\mathrm{an}} \smallsetminus \mathfrak{V}'$ is a surjective clopen morphism onto $C^{\mathrm{an}} \smallsetminus \mathfrak{V}$. 
    Hence if $D'$ is a connected component of the space $C'^{\mathrm{an}} \smallsetminus \mathfrak{V}'$ then there exists 
    a connected component $D$ of the space $C^{\mathrm{an}} \smallsetminus \mathfrak{V}$ such that $\phi^{\mathrm{an}}$ restricts 
    to a surjective morphism from $D'$ onto $D$. 
    Let $D$ be a connected component of the space $C^{\mathrm{an}} \smallsetminus \mathfrak{V}$ which is not a general 
    Berkovich open ball. We show that if $D'$ is a connected component of the space $C'^{\mathrm{an}} \smallsetminus \mathfrak{V}'$
    such that $\phi^{\mathrm{an}}(D') = D$ then $D'$ cannot be a general Berkovich open ball.  
    Suppose that $D'$ was a general Berkovich open ball. There exists a point $q \in C'^{\mathrm{an}}$ such
     that $D' \cup \{q\}$ is compact. It follows that $D \cup \phi^{\mathrm{an}}(q)$ is 
    compact. The only elements in $\mathcal{C}$ for which this is possible are general Berkovich open disks which contradicts our assumption. 
   
    Let $D \in \mathcal{C}$ be a punctured open disk or open annulus. Let $D'$ be a connected component in 
    $C'^{\mathrm{an}} \smallsetminus \mathfrak{V}'$ such that $\phi^{\mathrm{an}}(D') = D$. There exists a finite set of points $P_{D'}$ such that 
    $D' \smallsetminus P_{D'}$ is the disjoint union of general Berkovich open disks and finitely many general open annuli   
    or punctured Berkovich disks. Let $\mathcal{C}'_{D'}$ denote the connected components of $D' \smallsetminus P_{D'}$. 
    Let $O$ be a Berkovich open disk in $D' \smallsetminus P_{D'}$. The image $\phi^{\mathrm{an}}(O)$ is a connected open 
     subset of $D$ for which there exists $p \in D$ such that $\phi^{\mathrm{an}}(O) \cup \{p\}$ is compact. It follows from Lemma 
     2.32 
     that $\phi^{\mathrm{an}}(O)$ must be a Berkovich open disk $D$ and hence
      lies in the complement of the skeleton $\Sigma(D)$. Let $S_{D'}$ be
     the set of open annuli or punctured open disks in $\mathcal{C}'_{D'}$. 
     The set $S_{D'}$ is finite. If $A \in S_{D'}$ then by Proposition 2.5 in \cite{BPR}, we must have that 
     $\phi^{\mathrm{an}}(\Sigma(A)) \subset \Sigma(D)$. We showed that if $O \in \mathcal{C}'_{D'} \smallsetminus S_{D'}$ then 
     $\phi^{\mathrm{an}}(O) \subset D \smallsetminus \Sigma(D)$. It follows that 
     $\Sigma(D) \smallsetminus \bigcup_{A \in S_{D'}} \phi^{\mathrm{an}}(\Sigma(A))$ is at most a finite 
     set of points. Let $\Sigma(D') := \bigcup_{A \in S_{D'}} \Sigma(A) \cup P_{D'}$. The set $\Sigma(D')$ is a closed connected subset of 
     $D'$. As $\phi^{\mathrm{an}}$ restricted to $D'$ is closed, its image 
     $\phi^{\mathrm{an}}(\Sigma(D')) = \{ \phi^{\mathrm{an}}(p) | p \in P_{D'}  \} \cup \bigcup_{A \in S_{D'}} \phi^{\mathrm{an}}(\Sigma(A))$          
     is a closed connected subset of $D$. Hence $\{ \phi^{\mathrm{an}}(p) | p \in P_{D'} \} \subset \Sigma(D)$ 
     and $\phi^{\mathrm{an}}(\Sigma(D')) = \Sigma(D)$. 
     
     For every $D \in S$, let $D'_D$ be a connected component of $C'^{\mathrm{an}} \smallsetminus \mathfrak{V}'$ such that 
     $\phi^{\mathrm{an}}(D'_D) = D$. We showed that there exists $\Sigma(D'_D) \subset D'_D$ such that 
     $\phi^{\mathrm{an}}(\Sigma(D'_D)) = \Sigma(D)$. If $H'_0 := \bigcup_{D \in S} \Sigma(D'_D)$ then 
     $H' := H'_0 \cup \mathfrak{V}'$ is a finite graph. We must have that $\phi^{\mathrm{an}}(H') = H$. 
     Let $G := \mathrm{Gal}(k(C')/k(C))$. The set $\bigcup_{g \in G} g(H')$ is a finite graph and since the 
     morphism $C' \to C$ is Galois, we must have that $(\phi^{\mathrm{an}})^{-1}(H) = \bigcup_{g \in G} g(H')$.  
     
  \end{proof}

 \subsection{The Non-Archimedean Poincaré-Lelong Theorem}  
The non-Archimedean \\ Poincaré-Lelong theorem is used in Sections 6.3 and 6.4. 
Our treatment follows that of \cite{BPR}. 

Let $C$ be a smooth projective irreducible $k$-curve.
Let $x \in C^{\mathrm{an}}$ be a point of type II. The field $\widetilde{\mathcal{H}(x)}$ (cf. 2.1) is an 
algebraic function field over $\tilde{k}$. Let $\tilde{C}_x$ denote the smooth projective 
$\tilde{k}$-curve that corresponds to the field $\widetilde{\mathcal{H}(x)}$. 
Let $\mathrm{Prin}(C)$ and 
$\mathrm{Prin}(\tilde{C}_x)$
 denote the 
group of principal divisors on the curves $C_{/k}$ and $\tilde{C}_{x /\tilde{k}}$ respectively. We define a map 
$\mathrm{Prin}(C) \to \mathrm{Prin}(\tilde{C}_x)$ as follows. 
Let $f \in k(C)$ be a rational function on $C$ and $c$ be any element in $k$ such that 
$|f(x)| = |c|$. This implies that $(c^{-1}f) \in \mathcal{H}(x)^0$. Let $f_x$ denote 
 the image of 
$c^{-1}f$ in $\widetilde{\mathcal{H}(x)}$. Although $f_x \in \widetilde{\mathcal{H}(x)}$
depends on the choice of $c \in k$, the divisor that $f_x$ defines on $\tilde{C}_x$ is independent 
of $c$. Hence we have a well defined map $\mathrm{Prin}(C) \to \mathrm{Prin}(\tilde{C}_x)$. It can be 
shown that this map is a homomorphism of groups. 
  
    A function $F : C^{\mathrm{an}} \to \mathbb{R}$ is piecewise affine if for any geodesic segment 
$\lambda : [a,b] \to \mathbf{H}(C^{\mathrm{an}})$, the composition $F \circ \lambda : [a,b] \to \mathbb{R}$ is 
piecewise affine. 
The outgoing slope of a piecewise affine function $F$ at a point $x \in \mathbf{H}(C^{\mathrm{an}})$ along 
a tangent direction $v \in T_x$ is defined to be 
\begin{align*} 
   \delta_vF(x) := \mathrm{Lim}_{\epsilon \to 0} (F \circ \lambda)'(\epsilon) .  
\end{align*}
       where $\lambda : [0,a] \hookrightarrow \mathbf{H}(C^{\mathrm{an}})$ is a representative of the element $v$. 
   It is evident from the definition that $\delta_vF(x)$ depends only on the equivalence class of $\lambda$ i.e. it depends only on the element $v \in T_x$. 

\begin{thm}\emph{(Non-Archimedean Poincaré-Lelong Theorem)} Let $f \in k(C)$ be a non-zero rational function on 
 the curve $C$ and $S$ denote the set of zeros and poles of $f$. 
Let $\mathfrak{W}$ be a weak semistable vertex set whose set of $k$-points is the set $S$. Let
 $\Sigma(C^{\mathrm{an}},\mathfrak{W})$ be the skeleton associated to $\mathfrak{W}$
 and 
$\lambda_{\Sigma(C^{\mathrm{an}},\mathfrak{W})} :[0,1] \times C^{\mathrm{an}} \to C^{\mathrm{an}}$ be the deformation retraction
 with image $\Sigma(C^{\mathrm{an}},\mathfrak{W})$.
We will use $\lambda_e$ to denote the morphism $\lambda_{\Sigma(C^{\mathrm{an}},\mathfrak{W})}(1,\_) : C^{\mathrm{an}} \to C^{\mathrm{an}}$ 
(cf. Proposition 2.21). If 
$F := -\mathrm{log}|f| : C^{\mathrm{an}} \smallsetminus S \to \mathbb{R}$. Then we have that 
\begin{enumerate}
 \item  $F = F \circ \lambda_e$.
 \item $F$ is piecewise affine with integer slopes and $F$ is affine on each edge of $\Sigma(C^{\mathrm{an}},\mathfrak{W})$.
 \item If $x$ is a type II point of $C^{\mathrm{an}}$ and $v$ is an element of the tangent space $T_x$, then 
         $\mathrm{ord}_{\tilde{v}}(f_x) := \delta_vF(x)$ defines a discrete valuation 
          $\mathrm{ord}_{\tilde{v}}$
 on the $\tilde{k}$-function field $\tilde{k}(\tilde{C}_x)$. 
  \item If $x \in C^{\mathrm{an}}$ is of type II or III then $\sum_{v \in T_x} \delta_vF(x) = 0$.  
 \item Let $x \in S$, $c$ be the ray in $\Sigma(C^{\mathrm{an}},\mathfrak{W})$ whose closure 
         in $C^{\mathrm{an}}$ contains $x$ and $y \in \mathfrak{W}$ the other end point of $e$. If $v \in T_y$ 
         is that element of the tangent space $T_y$ for which $c$ is a representative then
  $\delta_vF(y) = ord_x(f)$.  
\end{enumerate}
   \end{thm}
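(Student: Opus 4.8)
\section*{Proof proposal}

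The plan is to reduce every assertion to an explicit computation on the local models furnished by the semi-stable decomposition attached to $\mathfrak{W}$. Recall that $C^{\mathrm{an}} \smallsetminus \mathfrak{W}$ is a disjoint union of Berkovich open disks, open annuli and punctured open disks, and that $S \subset \mathfrak{W}$; consequently $f$ has neither zeros nor poles on any connected component of this complement. The two local facts I would establish first are the following. On a Berkovich open disk $\mathbf{O}(a,r)$ a function without zeros or poles is a unit whose absolute value is constant, which follows on each closed subdisk from the Newton-polygon description of units (valid since $k$ is algebraically closed). On an open annulus $\mathcal{A}(a;r,R)^-$, and likewise on a punctured disk $\mathbf{O}(a,r) \smallsetminus \{a\}$, such a function is, up to a unit of constant absolute value, a monomial $(T-a)^N$, so that
\begin{align*}
  -\log|f(\eta_{a,t})| = -N \log t - \log|u|
\end{align*}
is an affine function of $\log t$ with integer slope $-N$. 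In each case $|f(x)| = |f(\lambda_e(x))|$ because $\eta_{a,t}$ is the maximal point of the fibre of the retraction lying over it.

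These local models immediately yield (1) and (2). On a disk $F$ is constant and equal to its value at the unique boundary vertex of $\mathfrak{W}$ onto which the disk retracts; on an annulus or a punctured disk $F$ depends only on the coordinate $\log t$ of the retract. In both cases $F = F \circ \lambda_e$, which is (1). Since $F = F \circ \lambda_e$, piecewise linearity of $F$ along an arbitrary path reduces to piecewise linearity along $\Gamma(\mathfrak{W},C^{\mathrm{an}})$, where the annulus computation shows $F$ is affine with integer slope on each edge; this is (2).

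The crux is (3), which matches the metric datum $\delta_v F(x)$ with an algebraic datum on the reduction curve $\tilde C_x$. For a type II point $x$ I would choose $c \in k$ with $|f(x)| = |c|$, so that $c^{-1}f$ reduces to $f_x \in \widetilde{\mathcal{H}(x)} = \tilde k(\tilde C_x)$, and recall the canonical bijection between $T_x$ and the closed points of $\tilde C_x$ sending $v$ to a point $\tilde p_v$. The identity to establish is
\begin{align*}
   \delta_v F(x) = \mathrm{ord}_{\tilde p_v}(f_x),
\end{align*}
the right-hand side being the order of vanishing of $f_x$ at $\tilde p_v$ on the smooth curve $\tilde C_x$, which is manifestly a discrete valuation of $\tilde k(\tilde C_x)$. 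To verify the identity I would refine the decomposition so that $v$ is represented by the skeleton of an annulus emanating from $x$, choose a coordinate in which $f$ becomes $(\text{unit})\cdot(T-a)^N$, and compare the integer slope $-N$ computed above with the order of $f_x$ at $\tilde p_v$ by reducing $f$ and the local monomial. Establishing this compatibility between slope and reduction is the main obstacle, and is precisely the Poincaré--Lelong phenomenon; once it is in place, the valuation axioms and discreteness are inherited from $\mathrm{ord}_{\tilde p_v}$.

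Assertion (4) then splits by type. For $x$ of type III there are exactly two elements of $T_x$, and since $x$ is interior to an annulus skeleton on which $F$ is affine by (2), the two slopes are opposite and sum to zero. For $x$ of type II, (3) gives $\sum_{v \in T_x} \delta_v F(x) = \sum_{\tilde p} \mathrm{ord}_{\tilde p}(f_x) = \deg(\mathrm{div}(f_x))$, which vanishes because $f_x$ is a non-zero rational function on the smooth projective curve $\tilde C_x$, only finitely many directions contributing. Finally, for (5) the ray $c$ is the skeleton of a punctured open disk $\mathbf{O}(a,r)\smallsetminus\{a\}$ with $a = x$, on which $f = (\text{unit})\cdot(T-a)^m$ with $m = \mathrm{ord}_x(f)$. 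Parametrising the representative of $v$ at $y$ by arc length toward $x$, so that $\log t$ decreases, the first-step formula gives $(F\circ\lambda)'(\epsilon) = m$, whence $\delta_v F(y) = \mathrm{ord}_x(f)$.
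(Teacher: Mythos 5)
This theorem is not proved in the paper at all: the subsection containing it opens with the remark that it is ``essentially a copy of what happens in a part of [BPR]'', so the result is imported from Baker--Payne--Rabinoff and there is no in-paper argument to compare yours against. Judged on its own terms, your outline is the standard one and the local models you invoke are correct: on the components of $C^{\mathrm{an}} \smallsetminus \mathfrak{W}$ the function $f$ is zero- and pole-free, a nowhere-vanishing analytic function on a Berkovich open disk has constant absolute value, and on an annulus or punctured disk $-\log|f(\eta_{a,t})|$ is affine in $\log t$ with integer slope. Parts (1), (2), (4) and (5) do follow from these facts essentially as you say, including the sign check in (5) and the split of (4) into the type III case (two opposite slopes on an edge) and the type II case (degree of a principal divisor on the projective curve $\tilde{C}_x$ is zero, with only finitely many directions contributing).

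The genuine gap is in (3), and you have in effect named it rather than closed it. The identity $\delta_v F(x) = \mathrm{ord}_{\tilde{p}_v}(f_x)$ \emph{is} the Poincar\'e--Lelong statement; saying one would ``compare the integer slope $-N$ with the order of $f_x$ at $\tilde{p}_v$ by reducing $f$ and the local monomial'' postpones exactly the computation that has to be done, namely that for a coordinate $t$ on an annulus attached to $x$ in the direction $v$, the reduction $\tilde{t}$ is a uniformizer at $\tilde{p}_v$ and reduction is compatible with the factorization $f = u\cdot(T-a)^N$. There is also a circularity to guard against: in this paper the bijection $B : T_x \to T_x^{\mathrm{alg}}$, which you use to produce the point $\tilde{p}_v$, is itself \emph{defined} via slopes and justified by appeal to the Poincar\'e--Lelong theorem (Section 2.4.3). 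To make your argument self-contained you would need an independent construction of the correspondence between tangent directions at $x$ and closed points of $\tilde{C}_x$ (via the reduction map on a formal/affinoid neighbourhood of $x$, as in [BPR]), and only then verify that the slope of $-\log|g|$ along $v$ equals $\mathrm{ord}_{\tilde{p}_v}(\tilde{g})$ for $g$ with $|g(x)|=1$. Once that single local computation is in place, the rest of your sketch goes through.
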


\subsubsection{ An alternate description of the tangent space at a point $x$ of type II}
   Let $x \in C^{\mathrm{an}}$ be a point of type II. We define the algebraic tangent space at a point of type II and show how this 
notion reconciles nicely with the definition we introduced above. Recall that the field
$\widetilde{\mathcal{H}(x)}$ is of transcendence degree $1$ over $\tilde{k}$ and uniquely associated to this 
$\tilde{k}$-function field is a smooth, projective $\tilde{k}$-curve which is denoted $\tilde{C}_x$.   

 \begin{defi}
     The algebraic tangent space at $x$ \emph{denoted $T^\mathrm{alg}_x$ is the set of closed points of the 
curve $\tilde{C}_x$}. 
 \end{defi}
 
   We now write out a map $B: T_x \to T^\mathrm{alg}_x$. The closed points 
of the $\tilde{k}$-curve $\tilde{C}_x$ correspond to discrete valuations on the 
field $\widetilde{\mathcal{H}(x)}$. Given a germ $e_x \in T_x$ and $f \in \widetilde{\mathcal{H}(x)}$ there 
exists $g \in \mathcal{H}(x)$ such that $|g(x)| = 1$ and $\tilde{g} = f$. Let 
$B(e_x)(f)$ be the slope of the function $-\mathrm{log}|g|$ along the germ $e_x$ directed outwards.
By the Non-Archimedean Poincaré-Lelong Theorem,
$B(e_x)$ defines a discrete valuation on the function field $\widetilde{\mathcal{H}(x)}$ i.e. a closed point of the 
curve $\tilde{C}_x$. 
The map $B$ is a well defined bijection. 

   Let $C'$ be a smooth, projective, irreducible curve over the field $k$ and $\rho : C' \to C$ a finite morphism.
    If $x'$ is a preimage of the point $x$ then it must be of type II as well. The inclusion of 
non-Archimedean valued complete fields $\mathcal{H}(x) \hookrightarrow \mathcal{H}(x')$ 
induces an extension of $\tilde{k}$-function fields 
$\widetilde{\mathcal{H}(x)} \hookrightarrow \widetilde{\mathcal{H}(x')}$. This defines a morphism 
$d\rho_{x'}^{\mathrm{alg}} : T^\mathrm{alg}_{x'} \to T^\mathrm{alg}_x$
between the algebraic tangent space at $x'$ and the algebraic tangent space at $x$. 
Recall that we have in addition a map $d\rho_{x'} : T_{x'} \to T_x$. These maps are 
compatible in the sense that the following diagram is commutative. 

 \setlength{\unitlength}{1cm}
\begin{picture}(10,5)
\put(4,1){$T^\mathrm{alg}_{x'}$}
\put(6.9,1){$T^\mathrm{alg}_x$}
\put(4.3,3.5){$T_{x'}$}
\put(7,3.5){$T_x$}
\put(4.4,3.3){\vector(0,-1){1.75}}
\put(7.1,3.3){\vector(0,-1){1.75}}
\put(4.8,1.1){\vector(1,0){2}}
\put(4.8,3.6){\vector(1,0){2}}
\put(5.6,0.7){$d\rho_{x'}^\mathrm{alg}$}
\put(5.6,3.2){$d\rho_{x'}$}
\put(4.5,2.3){$B$}
\put(7.2,2.3){$B$} 
\end{picture}

 \subsection{Continuity of lifts}      
       
      Let $\phi : C' \to C$ be a finite morphism between irreducible smooth projective curves.  
      In Section 3, we construct a pair of deformation retractions 
     $\lambda' : [0,1] \times C'^{\mathrm{an}} \to C'^{\mathrm{an}}$ and 
     $\lambda : [0,1] \times C^{\mathrm{an}} \to C^{\mathrm{an}}$ which are compatible 
     for the morphism $\phi^{\mathrm{an}}$. Our method of proof is to first construct a suitable 
     deformation retraction $\lambda$ on $C^{\mathrm{an}}$ and then lift it to a function 
     $\lambda' : [0,1] \times C'^{\mathrm{an}} \to C'^{\mathrm{an}}$ such that for 
     every $q \in C'^{\mathrm{an}}$, the map ${\lambda'}^q : [0,1] \to C'^{\mathrm{an}}$ 
     defined by setting ${\lambda'}^{q}(t) = \lambda'(t,q)$ 
     is continuous. Our goal in this section is to show that 
     given a deformation retraction $\lambda$ and 
     a lift $\lambda'$ as above, 
     the 
     function $\lambda'$ is continuous.      
      
 \begin{lem} 
  Let $\phi : C' \to C$ be a finite morphism between $k$-curves and suppose in addition that $C$ is normal. Let 
  $\mathfrak{V} \subset C^{\mathrm{an}}$ be a weak semistable vertex set and suppose $\mathfrak{V}'$ is a weak 
  semistable vertex set for $C'^{\mathrm{an}}$ such that $\Sigma(C'^{\mathrm{an}},\mathfrak{V}') = (\phi^{\mathrm{an}})^{-1}(\Sigma(C^{\mathrm{an}},\mathfrak{V}))$.
   Let $\mathcal{D}$ denote the set of connected components of the space 
  $C^{\mathrm{an}} \smallsetminus \Sigma(C^{\mathrm{an}},\mathfrak{V})$ and likewise, $\mathcal{D}'$
   denote the set of connected components of the space $C'^{\mathrm{an}} \smallsetminus \Sigma(C'^{\mathrm{an}},\mathfrak{V}')$. 
If $D' \in \mathcal{D}'$ then there exists $D \in \mathcal{D}$ such that $\phi^{\mathrm{an}}(D') = D$. Furthermore, the 
restriction $\phi^{\mathrm{an}}_{|D'} : D' \to D$ is both closed and open. 
 \end{lem}      
     \begin{proof}
        Let $D' \in \mathcal{D}'$. As $D'$ is connected and $\phi^{\mathrm{an}}$ is continuous, we must have that 
        $\phi^{\mathrm{an}}(D')$ is connected. Furthermore, $\phi^{\mathrm{an}}(D') \subset C^{\mathrm{an}} \smallsetminus \Sigma(C^{\mathrm{an}},\mathfrak{V})$
        because
         $\Sigma(C'^{\mathrm{an}},\mathfrak{V}') = (\phi^{\mathrm{an}})^{-1}(\Sigma(C^{\mathrm{an}},\mathfrak{V}))$. 
       The open subspace $C^{\mathrm{an}} \smallsetminus \Sigma(C^{\mathrm{an}},\mathfrak{V})$ decomposes into 
       the disjoint union $\bigcup_{A \in \mathcal{D}} A$. It follows that there exists $D \in \mathcal{D}$ such that 
       $\phi^{\mathrm{an}}(D') \subset D$.   
         
            Let $A'$ be a connected component of the space $(\phi^{\mathrm{an}})^{-1}(D)$ that contains $D'$.
            As $A' \subset C'^{\mathrm{an}} \smallsetminus \Sigma(\mathfrak{W},C'^{\mathrm{an}})$ and 
         $C'^{\mathrm{an}} \smallsetminus \Sigma(\mathfrak{W},C'^{\mathrm{an}})$ decomposes into the disjoint union 
         $\bigcup_{U \in \mathcal{D}'} U$, we must have that $D' = A'$. The morphism $\phi^{\mathrm{an}}$ is a finite morphism and 
         hence closed. By Lemma 6.1, it is open as well. 
         It follows that $\phi^{\mathrm{an}}$ restricts to a morphism $D' \to D$ which is both open and closed. 
         As $D$ is connected, we must have that $\phi^{\mathrm{an}}(D') = D$. 
                          \end{proof}   
       
 \begin{rem}      
     \emph{Recall that for $a \in k$ such that $|a| < 1$ and $r < 1$, we used $\mathbf{O}(a,r)$ to denote the Berkovich 
  open disk around $a$ of radius $r$ and 
  $\mathbf{B}(a,r)$ to denote the Berkovich closed disk around 
  $a$ of radius $r$. 
     By Proposition 1.6 in \cite{BR}, a basis $\mathcal{B}$ for the open sets of $\mathbf{O}(0,1)$ is given by the
sets
  \begin{align*}   
  \mathbf{O}(a,r),\mbox{ } \mathbf{O}(a,r) \smallsetminus \bigcup_{i \in I} X_i, \mbox{ } \mathbf{O}(0,1) \smallsetminus  \bigcup_{i \in I} X_i 
    \end{align*}
where $I$ ranges over finite index sets, $a$ ranges over $O(0,1)$, where $r \in (0,1)$ and $X_i$ is either a Berkovich closed sub disk 
    of the form $\mathbf{B}(a_i,r_i)$ with $a_i \in O(0,1)$ and $r_i \in [0,1)$ or a point of type I or IV. 
     We classify the elements of this basis by referring to Berkovich open sub disks as sets of form 1, 
    Berkovich open disks from 
  which a finite number of closed disks have been removed as sets of form 2 and
   the complement of the union of a finite number of closed sub disks as sets of form 3.}
 \end{rem}

 \begin{lem} 
    Let $U \subset \mathbf{O}(0,1)$ be a connected open set. Then $U$ is of the form 
    $\mathbf{O}(a,r) \smallsetminus \bigcup_{j \in J} X_j$ where $a \in O(0,1)$, $r \in (0,1]$, 
    $J$ is an index set and the $X_j$ are Berkovich closed disks or points of type I or IV. In addition, the 
    $X_j$ can be taken to be disjoint from each other. 
 \end{lem}   
    Note that we do not claim every set of the form $\mathbf{O}(a,r) \smallsetminus \bigcup_{j \in J} X_j$
    is open, as this is false. For instance the set $\mathbf{H}(\mathbf{O}(0,1)) := \mathbf{O}(0,1) \smallsetminus O(0,1)$ is 
   not open in $\mathbf{O}(0,1)$ as $O(0,1)$ is dense in $\mathbf{O}(0,1)$. 
\begin{proof}
    Let $x \in U$ be a point of type I. We define $r_x$ to be the supremum of the set $\{r \in [0,1) | \eta_{x,r} \in U\}$ where the 
    notation $\eta_{x,r}$ was introduced in 2.2.1. We maintain that $\eta_{x,0} := 0$. 
    The point $\eta_{x,r_x}$ either does not belong to $\mathbf{O}(0,1)$ or belongs to the set $\bar{U} \smallsetminus U$ where 
    $\bar{U}$ denotes the closure of $U$ in $\mathbf{O}(0,1)$. Indeed, the interval $[0,1)$ can be identified with the 
    set $\{\eta_{x,r} | r \in [0,1)\}$ by $r \mapsto \eta_{x,r}$ and this map is a homeomorphism when $[0,1)$ is equipped with the linear topology 
    and $\{\eta_{x,r} | r \in [0,1)\} \subset \mathbf{O}(0,1)$ is given the subspace topology.
    We claim that if $x,y \in U$ are points 
    of type I and if $\eta_{x,r_x} \in \mathbf{O}(0,1)$ then $\eta_{x,r_x} = \eta_{y,r_y}$. 
   Since $\mathbf{O}(0,1) \smallsetminus \{\eta_{y,r_y}\}$ decomposes into the disjoint union of open sets, we must have that 
   $U \subset \mathbf{O}(y,r_y)$. Hence $r_x \leq r_y$ which implies that $r_x = r_y$. Furthermore, since 
   $x \in \mathbf{O}(y,r_y)$, we must have that $\eta_{x,r_x} = \eta_{y,r_y}$. 
   Likewise, it can be shown that if $x,y \in U$ are points of type I and $r_x = 1$ then 
   $r_y = 1$. Let $\eta := \eta_{x,r_x}$ where $x \in U$ is a point of type I.  
   
       Let $J := \bar{U} \smallsetminus \{U \cup \eta\}$ where $\bar{U}$ is closure of $U$ in $\mathbf{O}(0,1)$. 
    If $x \in J$ is not a point of type IV then it can be seen as $\eta_{a,r}$ for some $a \in O(0,1)$ and 
    $r \in [0,1)$. 
    Let $X_j$ denote the Berkovich closed disk $\mathbf{B}(a,r)$. 
    Let $y \in \mathbf{B}(a,r)$ be a point of type I. If $y \in U$ then we must have that 
    $U \subset \mathbf{O}(y,r)$ since  
    $\mathbf{O}(0,1) \smallsetminus \eta_{a,r}$ is the disjoint union of open sets
    and $U$ is connected. As $r_y$ is constant as $y$ varies along the set of type I points in $U$, it can be deduced 
    that $r_y = r$ and that $x = \eta_{y,r_y}$ which is not possible. Hence $U \subset \mathbf{O}(0,1) \smallsetminus X_j$.  
    
     Let $x \in U$ be a point of type I. 
    If $j \in J$ then since $j \neq \eta$ lies in the closure of $U$, we must have that $j \in \mathbf{O}(x,r_x)$ and 
    $X_j \subset \mathbf{O}(x,r_x)$. 
     It can be verified using the results above that 
    $U = \mathbf{O}(x,r_x) \smallsetminus \bigcup_{j \in J} X_j$ 
     which concludes the proof.           
    \end{proof}

 \begin{lem}
    Let $f : \mathbf{O}(0,1) \to \mathbf{O}(0,1)$ be a surjective, open and closed continuous function.
  \begin{enumerate}
  \item If $U \in \mathcal{B}$ is an open set of form $i$ where $i$ is 1 or 3 then
          $f(U)$ is of form $i$ as well. If $U \in \mathcal{B}$ is of form 2 then $f(U)$ is of 
          form 1 or 2. If we suppose in addition that $f$ is bijective and $U$ is of form 2  
      then $f(U)$ is also of form 2.
  \item If $Y \subset \mathbf{O}(0,1)$ is a Berkovich closed disk then $f(Y)$ is a Berkovich closed disk.
 \end{enumerate} 
 \end{lem}   
\begin{proof}
  \begin{enumerate} 
  \item  Let $U$ be a Berkovich open sub ball. 
  We show that $f(U)$ is a Berkovich open ball. 
  The closure $\overline{U}$ is a compact 
  subspace of $\mathbf{O}(0,1)$.
  Observe that $\overline{U} \smallsetminus U$ is a single point which we denote $p$. 
   As $f$ is continuous, $f(\overline{U}) = f(U) \cup \{f(p)\}$ must be compact as well. 
    As $U$ is connected, $f(U)$ must be a connected open set as well. By Lemma 2.32, it suffices to verify which 
    connected open sets in $\mathbf{O}(0,1)$
     are such that they can be compactified by adding a single point of $\mathbf{O}(0,1)$. It can be checked by hand that the only possibility 
  for $f(U)$ is a Berkovich open ball contained in $\mathbf{O}(0,1)$. 
 
         Let $\{D_1,\ldots,D_m\}$ be a finite number of Berkovich closed disks or points of types I or IV
          in $\mathbf{O}(0,1)$ and 
         $U := \mathbf{O}(0,1) \smallsetminus (\bigcup_i D_i)$.
         The set $U$ is of form 3 and we show that $f(U)$ is also of form 3. 
          As $U$ is connected, the image $f(U)$ is a connected open set as well. 
         By Lemma 2.32, $f(U)$ must be of the form 
         $\mathbf{O}(a,r) \smallsetminus \bigcup_{j \in J} X_j$ where $a \in O(0,1)$, $r \in (0,1]$, 
    $J$ is an index set and the $X_j$ are Berkovich closed disks or points of type I or IV. In addition, the $X_j$ can be taken to be disjoint.  
    We claim that $r = 1$. 
         Suppose $r < 1$. Then the closure of $f(U)$ in $\mathbf{O}(0,1)$ denoted $\overline{f(U)}$ 
        is compact. The $D_i$ are Berkovich closed disks or points of types I or IV and hence compact. As a result, the
         $f(D_i)$ are compact subsets of $\mathbf{O}(0,1)$. The surjectivity of $f$ implies that 
         $\mathbf{O}(0,1) = \bigcup_i f(D_i) \cup (\overline{f(U)})$ is compact. This is a contradiction and we must hence have 
         that $r =1$. We claim that the index set $J$ is finite. There exists a finite set of points
         $S := \{p_1,\ldots,p_r\}$ in $\mathbf{O}(0,1)$ such that 
         $U \cup S$ is closed in $\mathbf{O}(0,1)$. As the map $f$ is closed, $f(U \cup S)$ is closed in $\mathbf{O}(0,1)$. 
         Uniquely associated to each $j \in J$ is an element $x_j \in \mathbf{O}(0,1)$ that lies in the closure of $f(U)$. 
         Hence we must have that the index set $J$ is finite. This implies that $f(U) \in \mathcal{B}$ and is of form 3.

   Let $U \in \mathcal{B}$ be an open set of form 2. As $U$ is contained in a Berkovich open disk $U'$, we must have that its image 
   is a connected open set which is contained in the Berkovich open disk $f(U')$ strictly contained in $\mathbf{O}(0,1)$. 
   Repeating the arguments above, it can be shown that $f(U) \in \mathcal{B}$ is of form either 1 or 2.  
   Suppose that $f$ is bijective and let $U = U' \smallsetminus (\bigcup_i D_i)$ where 
   the $D_i \subset U'$ are Berkovich closed sub disks or points of type I or IV.
    It follows that $f(U) = f(U') \smallsetminus \bigcup_i f(D_i)$.
   As the only connected open subsets of $\mathbf{O}(0,1)$ which are open balls from which a
    finite number of closed subspaces have been removed are of form 2,
  we conclude that $f(U)$ is of 
   form 2.    
   
 \item  Let $Y$ be a closed disk of radius $r$ in $O(0,1)$. 
     The closed disk $Y$ can be seen as the union of a family of Berkovich open sub disks in $O(0,1)$ of radius $r$  
     and a point. Hence we can write $Y = (\bigcup_{i \in I} V_i) \cup \{q\}$ where $I$ is an index set, the $V_i$ are Berkovich 
     open disks of radius $r$ and $q$ is the unique point such that for every $i$, $V_i \cup \{q\}$ is compact. 
     It follows that $f(V_i \cup \{q\}) = f(V_i) \cup f(q)$ is a compact set. 
    Let $U_i := f(V_i)$ and $p = f(q)$. By part (1), the $U_i$ are Berkovich open balls contained in $\mathbf{O}(0,1)$. 
    The point $p$ of $\mathbf{O}(0,1)$
   such that $U_i \cup \{p\}$ is compact is uniquely determined by $U_i$. Furthermore, this 
    point $p$ determines the radius of the Berkovich open ball $U_i$. It follows that the radii of the Berkovich open balls $U_i$ 
    are the same. Let $t$ be the radius of Berkovich open balls $U_i$.
     Let $X$ denote the Berkovich closed ball corresponding to the point $p$. The radius of $X$ is $t$. 
     The tangent space at $p$ is in bijection with the set of Berkovich open balls of radius $t$ contained in 
     $X$ and the open annulus $O(0,1) \smallsetminus X$. 
     Likewise, the tangent space at $q$ is the set of Berkovich open balls $V_i$ of radius $r$ contained in $Y$ and 
     the open annulus $O(0,1) \smallsetminus Y$. 
     The tangent space at $q$ surjects onto the tangent space at $p$. Furthermore, for every $i \in I$, the image $U_i$ of the
     Berkovich open ball $V_i$ is a Berkovich open ball of radius $t$ contained in $X$.
    Hence 
    if $U \subset X$ is a Berkovich open disk of radius $t$ then there exists $j \in I$ such that $f(V_j) = U$. 
     It follows that $f(Y) = X$.  
  \end{enumerate} 
\end{proof} 

\begin{lem} 
   Let $a \in k$ and $r$ be a positive real number belonging to $|k^*|$. 
 Let $\mathbf{B}(a,r)$ denote the Berkovich closed disk around $a$ of radius $r$ and let 
 $\sigma : \mathbf{B}(a,r) \to \mathbf{B}(a,r)$ be an automorphism of analytic spaces. Suppose $W \subset \mathbf{B}(a,r)$ is a Berkovich open disk 
 of radius $0 < s < r$ then $\sigma(W)$ is a Berkovich open disk of radius $s$. Likewise, if $W$ is a Berkovich closed disk of radius $0 \leq s < r$ then
 $\sigma(W)$ is a Berkovich closed disk of radius $s$.      
\end{lem} 
\begin{proof} 
  As $r \in |k^*|$, we can suppose that $r = 1$, $a = 0$. 
We choose coordinates and write $\mathbf{B}(0,1) = \mathcal{M}(k\{T\})$.  
   The automorphism $\sigma$ induces an automorphism 
  $\sigma' : k\{T\} \to k\{T\}$ of affinoid algebras. By the Weierstrass preparation theorem, we must have that 
  $\sigma'(T) = f(T)u$ where $f(T) = c(T - a_1)(T - a_r)$ is a polynomial in $T$ with $c \in k^*$ and $u$ is an invertible element in 
  $k\{T\}$. As $\sigma'$ is an automorphism, we must have that $|c| = 1$ and  
  that $f(T)$ is of degree $1$. It follows that $f(T) = c(T - b)$ for some $b \in B(0,1)$. 
   We now show that if $W$ is a Berkovich open sub ball around a point $x \in B(0,1)$ of radius $s$ then 
   $\sigma(W)$ is a Berkovich open ball around $\sigma(x)$ of radius $s$. By definition, $W = \{p \in \mathbf{B}(0,1)| |(T-x)(p)| < s \}$. 
   It follows that $\sigma(W) = \{q \in \mathbf{B}(0,1) | |(1/(cu)T) - (x - b)| < s\}$. As $|cu| = 1$, it can be checked that the claim has been verified.
   The proof can be repeated when $W$ is the closed disk $\{p \in \mathbf{B}(0,1)| |(T-x)(p)| \leq s \}$.  
\end{proof} 
 
   We make use of the following notation in the statements that follow. Let $\phi : C' \to C$ be a finite morphism between 
   smooth projective curves. Let $\mathfrak{W}', \mathfrak{W}$ be weak semistable vertex sets for $C'^{\mathrm{an}}$ and 
   $C^{\mathrm{an}}$ respectively such that 
   $\Sigma(C'^{\mathrm{an}},\mathfrak{W}') = (\phi^{\mathrm{an}})^{-1}(\Sigma(C^{\mathrm{an}},\mathfrak{W}))$. 
   Let $\mathcal{D}'$ denote the set of connected components of the
space $C'^{\mathrm{an}} \smallsetminus \Sigma(C'^{\mathrm{an}},\mathfrak{W}')$.
If $D' \in \mathcal{D}'$ then $D'$ is isomorphic to the Berkovich unit ball $\mathbf{O}(0,1)$ and we identify $D'$ via this isomorphism.
Likewise, let $\mathcal{D}$ denote the set of connected components of the space 
$C^{\mathrm{an}} \smallsetminus \Sigma(C^{\mathrm{an}},\mathfrak{W})$.
 
 \begin{lem}
  Let $\phi : C' \to C$ be a finite morphism between irreducible projective smooth $k$-curves.
    Let 
  $\mathfrak{V} \subset \mathfrak{W}$ be weak semistable vertex sets of $C^{\mathrm{an}}$. Let
 $\mathfrak{W}' \subset C'^{\mathrm{an}}$ be a weak semistable 
  vertex set such that $\Sigma(C'^{\mathrm{an}},\mathfrak{W}') = (\phi^{\mathrm{an}})^{-1}(\Sigma(C^{\mathrm{an}},\mathfrak{W}))$. 
 Let $\lambda^\mathfrak{W}_{\Sigma(C^{\mathrm{an}},\mathfrak{V})} : [0,1] \times C^{\mathrm{an}} \to C^{\mathrm{an}}$ be the deformation retraction
 constructed in Proposition 2.21 
  whose image is $\Sigma(C^{\mathrm{an}},\mathfrak{W})$.
   Let $\lambda' : [0,1] \times C'^{\mathrm{an}} \to C'^{\mathrm{an}}$ be a 
  function such that 
   for every $q \in C'^{\mathrm{an}}$, the path ${\lambda'}^q : [0,1] \to C'^{\mathrm{an}}$  
 defined by $t \mapsto {\lambda'}(t,q)$ is continuous and 
 $\lambda'^{q}(1) \in \Sigma(C'^{\mathrm{an}},\mathfrak{W}')$. 
  Furthermore, if 
 $\phi^{\mathrm{an}}(q) = p$ then $\lambda'^q$ is the unique path starting from 
 $q$ such that $\phi^{\mathrm{an}} \circ \lambda'^q = (\lambda^\mathfrak{W}_{\Sigma(C^{\mathrm{an}},\mathfrak{V})})^{p}$.     
Let $D' \in \mathcal{D}'$ and $x_1,x_2 \in D'$. There exists $r \in [0,1]$ such that
$\lambda'^{x_1}(r), \lambda'^{x_2}(r) \in D'$ and 
 $\lambda'^{x_1}_{|[r,1]} = \lambda'^{x_2}_{|[r,1]}$.
 \end{lem} 
  We simplify notation and write $\lambda$ in place of $\lambda^\mathfrak{W}_{\Sigma(C^{\mathrm{an}},\mathfrak{V})}$. 
 \begin{proof}    
  Recall that when constructing the deformation retraction $\lambda$, 
  we identified every $D \in \mathcal{D}$ with the standard Berkovich open unit disk. Let $D'$ be as in the statement of the lemma. 
  By Lemma 2.30, there exists $D \in \mathcal{D}$ such that $\phi^{\mathrm{an}}(D') = D$.
    If $\overline{D}$ is the closure of $D$ in $C^{\mathrm{an}}$ then $\overline{D} \smallsetminus D$ is a single point $\eta$. 
  By construction of the deformation retraction $\lambda$ there exists $s \in [0,1]$ such that 
  for every $y \in D$, $\lambda(s,y) = \eta$ and 
  the restriction $[0,s) \times D \to D$ given by $(t,x) \mapsto \lambda(t,x)$ is well defined. 
  We must hence have that the restriction $\lambda' : [0,s) \times D' \to D'$ is well defined. 
  If $\overline{D'}$ is the closure of $D'$ in $C'^{\mathrm{an}}$ then $\overline{D'} \smallsetminus D'$ is a single point $\eta'$. 
  Furthermore, for every $x \in D'$, $\lambda'(s,x) = \eta'$. 
  If $U'$ is a simple neighborhood of the point $\eta'$ then 
  the tangent space $T_{\eta'}$ is in bijection with the connected components of the space 
  $U' \smallsetminus \eta'$. The set $D'$ corresponds to a single element of the tangent space at $\eta'$. 
  It follows that for some 
  $r \in [0,s)$, $\lambda'^{x_1}(r) \in D' \cap \lambda'^{x_2}([0,s])$. Let $q := \lambda'^{x_1}(r)$ and 
  $p := \phi^{\mathrm{an}}(q)$. By construction of the deformation retraction 
  $\lambda$, $\lambda(t,p) = p$ for every $t \in [0,r]$. Also, the deformation $\lambda$ satisfies the following property. 
  For every $a < b \in [0,1]$ and $y \in C^{\mathrm{an}}$, $\lambda(b,(\lambda(a,y)) = \lambda(b,y)$. It follows that 
  $\lambda'^q_{|[r,1]}$, $\lambda'^{x_1}_{|[r,1]}$ and $\lambda'^{x_2}_{|[r,1]}$ are all lifts of the path $\lambda^p_{|[r,1]}$. As the lift of the path 
  starting from $p$ is unique, we must have that $\lambda'^{q}_{[r,1]} = \lambda'^{x_1}_{[r,1]} = \lambda'^{x_2}_{[r,1]}$. 
 \end{proof}

\begin{prop}
  Let $\phi : C' \to C$ be a finite morphism between irreducible projective smooth $k$-curves.
  Assume in addition that the extension of function fields $k(C) \hookrightarrow k(C')$ is a finite Galois extension and 
  let $G := \mathrm{Gal}(k(C')/k(C))$.   
    Let 
  $\mathfrak{V} \subset \mathfrak{W}$ be weak semistable vertex sets of $C^{\mathrm{an}}$. Let
 $\mathfrak{W}' \subset C'^{\mathrm{an}}$ be a weak semistable 
  vertex set such that $\Sigma(C'^{\mathrm{an}},\mathfrak{W}') = (\phi^{\mathrm{an}})^{-1}(\Sigma(C^{\mathrm{an}},\mathfrak{W}))$. 
 Let $\lambda^\mathfrak{W}_{\Sigma(C^{\mathrm{an}},\mathfrak{V})} : [0,1] \times C^{\mathrm{an}} \to C^{\mathrm{an}}$ be the deformation retraction
 as constructed in Proposition 2.21 
  whose image is $\Sigma(C^{\mathrm{an}},\mathfrak{W})$.
   Let $\lambda' : [0,1] \times C'^{\mathrm{an}} \to C'^{\mathrm{an}}$ be a 
  function such that 
   for every $q \in C'^{\mathrm{an}}$, the path ${\lambda'}^q : [0,1] \to C'^{\mathrm{an}}$  
 defined by $t \mapsto {\lambda'}(t,q)$ is continuous and also that  
  the following diagram commutes.   
  
    \setlength{\unitlength}{1cm}
\begin{picture}(10,5)
\put(3.5,1){$[0,1] \times C^{\mathrm{an}}$}
\put(8,1){$C^{\mathrm{an}}$}
\put(3.5,3.5){$[0,1] \times C'^{\mathrm{an}}$}
\put(8,3.5){$C'^{\mathrm{an}}$}
\put(4.4,3.3){\vector(0,-1){1.75}}
\put(8.1,3.3){\vector(0,-1){1.75}}
\put(5.4,1.1){\vector(1,0){2.55}}
\put(5.4,3.6){\vector(1,0){2.55}}
\put(5.7,0.7){$\lambda^\mathfrak{W}_{\Sigma(\mathfrak{V},C^{\mathrm{an}})}$}
\put(6.2,3.2){$\lambda'$}
\put(4.5,2.3){$id \times \phi^{\mathrm{an}}$}
\put(8.2,2.3){$\phi^{\mathrm{an}}$}.
\end{picture}  

We suppose in addition that for every $q \in C'^{\mathrm{an}}$, the path 
$\lambda'^{q}$ is the unique lift
starting from $q$
 of the path $(\lambda^\mathfrak{W}_{\Sigma(\mathfrak{V},C^{\mathrm{an}})})^{\phi^{\mathrm{an}}(q)}$ and
 also that
 $\lambda'$ is $G$-invariant i.e. for every 
$g \in G$, $t \in [0,1]$ and $x \in C'^{\mathrm{an}}$, 
$g(\lambda'(t,x)) = \lambda'(t,g(x))$. 
The following statements are then true. 
\begin{enumerate} 
\item   Let $\mathcal{D}'$ denote the set of connected components of the space $C'^{\mathrm{an}} \smallsetminus \Sigma(C'^{\mathrm{an}},\mathfrak{W}')$.
 If $D' \in \mathcal{D}'$ then $D'$ is isomorphic to the Berkovich unit ball $\mathbf{O}(0,1)$ and we identify $D'$ via this isomorphism. 
 By Lemma 2.30, the group $G$ has a well defined action on the set $\mathcal{D}'$.
Let $H \subset G$ be the sub group which fixes $D'$. Let $W$ be a Berkovich closed or open ball strictly contained in $D'$. 
 There exists a Berkovich closed sub ball $\mathbf{B}(0,r) \subset D'$ with $r \in |k^*|$ such that $H$ stabilizes $\mathbf{B}(0,r)$ and 
 $W \subset \mathbf{B}(0,r)$.  
\item  The map $\lambda' : [0,1] \times C'^{\mathrm{an}} \to C'^{\mathrm{an}}$ is continuous. 
\end{enumerate}   
  \end{prop} 
   Over the course of the proof, we simplify notation and write $\lambda$ in place of $\lambda^\mathfrak{W}_{\Sigma(C^{\mathrm{an}},\mathfrak{W})}$. 
 The hypothesis that $\lambda'$ is $G$-invariant is redundant as it can be deduced from the uniqueness of the lifts.   
\begin{proof}
\begin{enumerate}
\item     Let $\mathcal{D}$ denote the set of connected components of the space $C^{\mathrm{an}} \smallsetminus \Sigma(C^{\mathrm{an}},\mathfrak{W})$. 
     In the proof of Proposition 2.21, we constructed the deformation retraction $\lambda$ by identifying every $D \in \mathcal{D}$ with 
     Berkovich open unit disks centered at $0$. By Lemma 6.1, the morphism $\phi^{\mathrm{an}}$
is open. As $\phi$ is a finite morphism, the morphism $\phi^{\mathrm{an}}$ is closed as well. Let $D' \in \mathcal{D}'$. By Lemma 2.30, there 
exists $D \in \mathcal{D}$ such that $\phi^{\mathrm{an}}(D') = D$. We also showed in 2.30, that $D'$ is a connected component of 
the space $(\phi^{\mathrm{an}})^{-1}(D)$ and hence the morphism $\phi^{\mathrm{an}}$ restricts to a closed and open morphism from 
$D'$ onto $D$. We use $\phi^{\mathrm{an}}_{D'}$ to 
denote the restriction of $\phi^{\mathrm{an}}$ to $D'$. 
Let $x \in D$ and $R := (\phi_{D'}^{\mathrm{an}})^{-1}(x) = \{ y_1,\ldots,y_m\}$. Recall that $H$ is the sub group of $G$ which 
stabilizes $D'$. It follows that $R = \{h(y_1)\}_{h \in H}$. There exists $s \in [0,1]$ such that for every $z \in D$, $\lambda(t,z) \in D$ for $t \in [0,s)$ and 
$\lambda(s,z) \in \overline{D} \smallsetminus D$.
 For every $i$, let $p_{y_i}$ denote the path $[0,s] \to \overline{D'}$ defined by $t \mapsto \lambda'(t,y_i)$.       
The paths $p_{y_i}$ are each lifts of the path $\lambda^x : [0,s] \to \overline{D}$ defined by $t \mapsto \lambda(t,x)$. 
Observe that $p_{y_i}(s) = \overline{D'} \smallsetminus D'$. By Lemma 2.35, there exists $r \in [0,s)$ such that 
for every $y,y' \in R$, ${p_{y'}}_{|[r,s)} = {p_{y}}_{|[r,s)}$. Let $r' \in [r,s)$ be such that 
$\lambda^x(r') = \eta_{0,u}$ (cf. 2.1.1) for some $u \in |k^*|$ and $\mathbf{B}(0,u)$ contains $\phi_{D'}^{\mathrm{an}}(W)$.  
Since $p_{y_i}(r') = p_{y_j}(r')$ for every $y_i,y_j \in R$ and the paths $p_y$ for $y \in R$ are Galois conjugates of each other, we must have that 
$q := p_{y}(r')$ is fixed by $H$ and hence $q = (\phi_{D'}^{\mathrm{an}})^{-1}(\lambda(r',x))$. We simplify notation and use $X$ to denote the closed disk  
$\mathbf{B}(0,u)$. 
The group $H$ restricts to an action of the space $Y := (\phi^{\mathrm{an}}_{D'})^{-1}(X)$. 
 We claim that $Y$ is a Berkovich closed disk in $D'$. Let $Y'$ denote the complement of $Y$ in $D'$. 
The image $\phi^{\mathrm{an}}(Y')$ is the complement of $X$ in $D$ as $\phi^{\mathrm{an}}_{D'}$ is surjective.
Let $X' := D \smallsetminus X$.  
 We claim
 that the space $Y'$ is a connected open set.
 Suppose that $Y'$ is not connected. The morphism $\phi^{\mathrm{an}}_{D'}$ is clopen and hence maps each connected component of $Y'$
onto the complement of $X$ in $D$. Let $Z$ be a connected component of $Y'$. 
Lemmas 2.32 and 2.33 can be used to show that $Z$ must be of the form $\mathbf{O}(0,1) \smallsetminus \bigcup_{j \in J} X_j$ where 
the $X_j$ are Berkovich closed disks or points of type I or IV. As the morphism $\phi^{\mathrm{an}}$ restricts to a finite morphism from 
$Z$ onto $X'$, it can be deduced that there can be only a finite number of points in $\overline{Y'} \smallsetminus Y'$ where $\overline{Y'}$ denotes the closure
of $Y'$ in $D'$. We must hence have that $Y' \in \mathcal{B}$ and is of form 3. 
As the union of open sets in $\mathcal{B}$ of form 3 is connected, we conclude that 
$Y'$ is a connected open set in $\mathcal{B}$ of form 3.  
 It must be the complement of the
union of a finite number of Berkovich closed disks or points of type I or IV.
The complement of $Y'$ is the space $Y$
 and $Y = (\phi^{\mathrm{an}}_{D'})^{-1}(X)$. 
We showed that there exists a point $q \in Y$ which 
 is $H$-invariant. As $\phi^{\mathrm{an}}_{D'}$ is clopen, every connected component of $Y$
 must contain the point $q$ and hence $Y$ is connected. If the union of a finite number of Berkovich closed disks and points of types I or IV  
in $D'$ is connected then that union must be a Berkovich closed disk as well. Hence, $Y$ is a Berkovich closed disk. The morphism $\phi^{\mathrm{an}}_{D'}$ 
restricts to a finite morphism from $Y$ onto $X$. 
As $k$ is algebraically closed, the radius of $Y$ belongs to the group $|k^*|$. This proves the first part of the proposition
\item 
We make use of the notation introduced in the proof of part 1 of the proposition. 
Let $W \subset C'^{\mathrm{an}}$ be a connected open set.
We must show that $\lambda'^{-1}(W)$ is an open subset of $[0,1] \times C'^{\mathrm{an}}$. 
 We divide the proof into two cases - when $W \cap \Sigma(C'^{\mathrm{an}},\mathfrak{W}')$ is empty and 
 when $W \cap \Sigma(C'^{\mathrm{an}},\mathfrak{W}')$ is non-empty. 
 
 We treat the first case - $W \cap \Sigma(C'^{\mathrm{an}},\mathfrak{W}') = \emptyset$. 
  As $W$ is connected, there exists $D' \in \mathcal{D}'$ such that $W \subset D'$. 
By 2.30, there exists $D \in \mathcal{D}$ such that $\phi^{\mathrm{an}}(D') = D$. 
We may suppose further that $W$ belongs to $\mathcal{B}$. 
It must be of form 1, 2 or 3. Suppose that 
$W$ is a Berkovich open disk contained in $D'$. Let $V := \phi^{\mathrm{an}}(W) \subset D$. 
By Lemma 2.33, $V$ is a Berkovich open disk in $D$. By construction of $\lambda$, we must have 
that there exists $s \in [0,1]$ such that $\lambda^{-1}(V) = [0,s) \times V$. By assumption, $\lambda'$ is 
compatible with $\lambda$ in that the following diagram is commutative.

   \setlength{\unitlength}{1cm}
\begin{picture}(10,5)
\put(3.5,1){$[0,1] \times C^{\mathrm{an}}$}
\put(8,1){$C^{\mathrm{an}}$}
\put(3.5,3.5){$[0,1] \times C'^{\mathrm{an}}$}
\put(8,3.5){$C'^{\mathrm{an}}$}
\put(4.4,3.3){\vector(0,-1){1.75}}
\put(8.1,3.3){\vector(0,-1){1.75}}
\put(5.4,1.1){\vector(1,0){2.55}}
\put(5.4,3.6){\vector(1,0){2.55}}
\put(6.2,0.7){$\lambda$}
\put(6.2,3.2){$\lambda'$}
\put(4.5,2.3){$id \times \phi^{\mathrm{an}}$}
\put(8.2,2.3){$\phi^{\mathrm{an}}$}.
\end{picture}

 It follows that 
$\lambda'^{-1}(W) \subset [0,s) \times (\phi^{\mathrm{an}})^{-1}(V)$. Let $A_1,\ldots,A_m$ denote the connected components 
of $(\phi^{\mathrm{an}})^{-1}(V)$.   
Observe that $(\phi^{\mathrm{an}})^{-1}(V) = \bigcup_{\sigma \in G} \sigma(W)$. 
We suppose without loss of generality that $W \subset A_1$.
As $A_1$ is connected, we must have that $A_1 \subset D'$. 
 We claim that $W = A_1$. 
By 2.30, if $\sigma \in G$ then $\sigma(D') \in \mathcal{D}'$. Let $H := \{ \sigma \in G | \sigma(D') = D' \}$.
We must have that $A_1 \subset \bigcup_{\sigma \in H} \sigma(W)$ and $A_1 \cap \sigma(W) = \emptyset$ if 
$\sigma \notin H$. By part (1) of the proposition and Lemma 2.34, if $\sigma \in H$ then 
$\sigma(W)$ is a Berkovich open ball whose radius is equal to that of $W$. It follows that one of the connected components 
of $\bigcup_{\sigma \in H} \sigma(W)$ is the ball $W$. Hence $W = A_1$. Observe that if $A_i$ is a connected component and $x \in A_i$ then 
for every $t \in [0,s)$, we must have that $\lambda'(t,x) \in A_i$. Indeed, the path $\lambda'^x : [0,s) \to C'^{\mathrm{an}}$ defined 
by $t \mapsto \lambda'(t,x)$ is contained in $(\phi^{\mathrm{an}})^{-1}(V)$. As $\{A_j\}$ is the set of connected components of 
$(\phi^{\mathrm{an}})^{-1}(V)$ we must have that $\lambda'(t,x) \in A_i$ for every $t \in [0,s)$. It follows from this 
observation that $\lambda'^{-1}(W) = [0,s) \times W$.  
        
    Let $W \subseteq D$ be a Berkovich open ball and $Y_1,\ldots,Y_m$ be disjoint Berkovich closed sub disks of $W$ or points of type I or IV.
    Let $Z := \bigcup_{1 \leq i \leq n} Y_i$. 
     We show 
    that $\lambda'^{-1}(W \smallsetminus Z)$ is an open subset of $[0,1] \times C'^{\mathrm{an}}$. We have already shown that there 
    exists $s \in [0,1]$ such that  
    $\lambda'^{-1}(W) = [0,s) \times W$. Hence $\lambda'^{-1}(W \smallsetminus Z) = ([0,s) \times W) \smallsetminus \lambda'^{-1}(Z)$. 
    As $Z$ is the disjoint union of the $Y_i$, we must have that $\lambda'^{-1}(Z) = \bigcup_i \lambda'^{-1}(Y_i)$. 
    It suffices hence to show that if $Y$ is a Berkovich closed disk contained in $D'$ then there exists $t \in [0,1]$ such that 
     $\lambda'^{-1}(Y) = [0,t] \times Y$. By Lemma 2.33, the image of $Y$ for 
     the morphism $\phi^{\mathrm{an}}_{D'}$ is a Berkovich closed disk or a point of type I or IV. We can
     use essentially the same argument above wherein we showed that the preimage of a Berkovich open disk $O$ 
     in $D'$ for the function $\lambda'$ is an open subset of $[0,1] \times C'^{\mathrm{an}}$ of the form $[0,s') \times O$ 
     to show that $\lambda'^{-1}(Y) = [0,t] \times Y$.   
    
    To conclude the  proof, we treat the case when $W \subset C'^{\mathrm{an}}$ is a 
    connected open set that intersects $\Sigma(C'^{\mathrm{an}},\mathfrak{W}')$. 
    Let $S := W \cap \Sigma(C'^{\mathrm{an}},\mathfrak{W}')$ and let $\mathcal{D}'_S := \{D' \in \mathcal{D} | \overline{D'} \cap S \neq \emptyset \}$. 
    As $W$ is connected, for every 
    $D' \in \mathcal{D}'_S$, $W \cap D'$ is a non empty connected open neighborhood which is the union of open sets in $\mathcal{B}$ of 
    form 3. It suffices to prove $\lambda'^{-1}(W)$ is open under the assumption that $W \cap D'$
    for $D' \in \mathcal{D}'_S$ 
     belongs to $\mathcal{B}$ and is of form 3.  
    We have the equality 
    $W = \bigcup_{D' \in \mathcal{D}'_S} (W \cap D') \cup (W \cap \Sigma(C'^{\mathrm{an}},\mathfrak{W}'))$. It follows that 
    $\lambda'^{-1}(W) = \bigcup_{D' \in \mathcal{D}'_S} \lambda'^{-1}(W \cap D') \cup \lambda'^{-1}(\Sigma(C'^{\mathrm{an}},\mathfrak{W}') \cap W)$. 
     We showed that for every $D' \in \mathcal{D}'_S$, $\lambda'^{-1}(W \cap D')$ is an open set of $[0,1] \times D'$. Furthermore, it can be checked that 
     $[0,1] \times W \subset \lambda'^{-1}(W)$. 
     By construction, 
   $\lambda'^{-1}(\Sigma(C'^{\mathrm{an}},\mathfrak{W}') \cap W) = 
   ([0,1] \times (\Sigma(C'^{\mathrm{an}},\mathfrak{W}') \cap W)) \bigcup (\{1\} \times \bigcup_{D' \in \mathcal{D}'_S} D')$. 
     The set $[0,1] \times W$ is an open subset of $[0,1] \times C'^{\mathrm{an}}$
     that is contained in $\lambda'^{-1}(W)$
      and is a neighborhood of every point 
     of $[0,1] \times (\Sigma(C'^{\mathrm{an}},\mathfrak{W}') \cap W)$.
    It remains to show that
    if $x \in D'$ for some $D' \in \mathcal{D}'_S$ then
     there exists an open subset of $[0,1] \times C'^{\mathrm{an}}$ 
     contained in $\lambda'^{-1}(W)$ 
     that is a neighborhood of $(1,x)$. We proceed as follows. As $W \cap D'$ is a connected open subset in $\mathcal{B}$
      of form 3, 
      it can be verified using the arguments above (case 1) that there exists $r_{W \cap D'} \in [0,1)$ such that 
      $(r_{W \cap D'},1] \times D' \subset \lambda'^{-1}(W)$. As $(r_{W \cap D'},1] \times D'$ is an open subset of $[0,1] \times C'^{\mathrm{an}}$ that 
      contains $1 \times D'$ we conclude the claim and the proof.     
   \end{enumerate}  
\end{proof}

\section{ Compatible deformation retractions}

      Our goal in this section is to prove the existence of a pair of compatible deformation retractions in the case of a finite morphism between curves. 
      The precise statement is the following. 
 \begin{thm}      
           Let $C$ and $C'$ be smooth projective irreducible $k$-curves and $\phi : C' \to C$ be a finite morphism. There exists a 
 pair of deformation retractions 
\begin{align*}
 \psi : [0,1] \times C^{\mathrm{an}} \to C^{\mathrm{an}}
 \end{align*}
 and 
\begin{align*}
  \psi' : [0,1] \times C'^{\mathrm{an}} \to C'^{\mathrm{an}}
\end{align*}
     with the following properties. 
 \begin{enumerate} 
 \item  The images 
 $\Upsilon_{C'^{\mathrm{an}}} := \psi'(1,C'^{\mathrm{an}})$
 and $\Upsilon_{C^{\mathrm{an}}} := \psi(1,C^{\mathrm{an}})$ are closed subspaces
of $C'^{\mathrm{an}}$ and $C^{\mathrm{an}}$ respectively, each with the structure of a 
connected, finite metric graph. Furthermore, we have that $\Upsilon_{C'^{\mathrm{an}}} = (\phi^{\mathrm{an}})^{-1}(\Upsilon_{C^{\mathrm{an}}})$.
\item 
There exists weak semistable vertex sets $\mathfrak{A}' \subset C'^{\mathrm{an}}$
and $\mathfrak{A} \subset C^{\mathrm{an}}$ such that 
$\Upsilon_{C'^{\mathrm{an}}} = \Sigma(C'^{\mathrm{an}},\mathfrak{A}')$ 
and $\Upsilon_{C^{\mathrm{an}}} = \Sigma(C^{\mathrm{an}},\mathfrak{A})$.
\item The deformation retractions $\psi$
and $\psi'$ are \textbf{compatible} i.e.
the following diagram is commutative. 
 
\setlength{\unitlength}{1cm}
\begin{picture}(10,5)
\put(2.5,1){$[0,1] \times C^{\mathrm{an}}$}
\put(6,1){$C^{\mathrm{an}}$}
\put(2.5,3.5){$[0,1] \times C'^{\mathrm{an}}$}
\put(6,3.5){$C'^{\mathrm{an}}$}
\put(3.4,3.3){\vector(0,-1){1.75}}
\put(6.1,3.3){\vector(0,-1){1.75}}
\put(4.4,1.1){\vector(1,0){1.55}}
\put(4.4,3.6){\vector(1,0){1.55}}
\put(4.7,0.7){$\psi$}
\put(4.7,3.2){$\psi'$}
\put(3.5,2.3){$id \times \phi^{\mathrm{an}}$}
\put(6.2,2.3){$\phi^{\mathrm{an}}.$}
\end{picture}   
\end{enumerate} 
\end{thm} 
 \begin{rem}   In \cite{HL}, Hrushovski and 
Loeser construct compatible 
 deformation retractions in greater generality. 
Let $\pi : V' \to V$ be a finite morphism between quasi-projective varieties $V'$ and $V$ 
over a non-Archimedean non-trivially real valued field $F$. There exists 
  a generalised real interval [\cite{HL}, Section 3.9] $I$ and 
 a pair of deformation retractions $H : I \times V^{\mathrm{an}} \to V^{\mathrm{an}}$ and 
 $H' : I \times V'^{\mathrm{an}} \to V'^{\mathrm{an}}$ which are compatible in the sense defined above. 
 This result follows from Remark 10.1.2 (2) and Corollary 13.1.6 in loc.cit. 
 \end{rem} 

   To prove Theorem 3.1, we adapt the   
  strategy employed in Section 7 of \cite{HL}. 
 Our method of proof is as follows. We begin by proving the theorem 
 for a finite morphism $\phi : C \to \mathbb{P}^1_k$ where 
 $C$ is a smooth, projective curve and the extension of function fields $k(\mathbb{P}^1_k) \hookrightarrow k(C)$ induced by the morphism
 $\phi$ is Galois. 
 We then use this result to prove the theorem 
 for a finite morphism $\phi : C' \to C$ between smooth projective curves. 
 We begin with the following lemma which provides us compatible weak semistable vertex sets for a finite morphism 
 between smooth projective irreducible curves.  
 
 \begin{lem}
     Let $\phi : C' \to C$ be a finite morphism between smooth projective irreducible curves. Let $S \subset C^{\mathrm{an}}$ be a finite set of points none 
     of which are of type IV.
      There exists a weak semistable vertex set $\mathfrak{W} \subset C^{\mathrm{an}}$ such that
      $\Sigma(C^{\mathrm{an}},\mathfrak{W})$ contains $S$, 
       $\mathfrak{V} := (\phi^{\mathrm{an}})^{-1}(\mathfrak{W})$ is a weak semistable vertex set of $C'^{\mathrm{an}}$ and 
     $\Sigma(C'^{\mathrm{an}}, \mathfrak{V}) = (\phi^{\mathrm{an}})^{-1}(\Sigma(C^{\mathrm{an}},\mathfrak{W}))$. 
     \end{lem}
 \begin{proof}
      Let $\mathfrak{W}_0$ be a weak semistable vertex set for $C^{\mathrm{an}}$ such that $\Sigma(C^{\mathrm{an}},\mathfrak{W}_0)$
      contains $S$. As the morphism $\phi^{\mathrm{an}}$ is finite, the preimage $(\phi^{\mathrm{an}})^{-1}(\Sigma(C^{\mathrm{an}},\mathfrak{W}_0))$ is 
      a finite graph which does not 
      contain a point of type IV (Proposition 2.27).
      Let $\mathfrak{V}_0$ be a weak semistable vertex set such that $\Sigma(C'^{\mathrm{an}},\mathfrak{V}_0)$ contains 
      $(\phi^{\mathrm{an}})^{-1}(\Sigma(C^{\mathrm{an}},\mathfrak{W}_0))$. Let $\mathfrak{W}_1$ be a weak semistable vertex set 
      such that the skeleton $\Sigma(C^{\mathrm{an}},\mathfrak{W}_1)$ contains $\phi^{\mathrm{an}}(\Sigma(C'^{\mathrm{an}},\mathfrak{V}_0))$.    
      We claim that the preimage $A := (\phi^{\mathrm{an}})^{-1}(\Sigma(C^{\mathrm{an}},\mathfrak{W}_1))$ is connected. Let 
      $A_1,\ldots,A_m$ denote the connected components of $A$ such that 
      $A_1$ contains $\Sigma(C'^{\mathrm{an}},\mathfrak{V}_0)$. 
      The morphism $\phi^{\mathrm{an}}$ is an open and closed morphism (cf. Lemma 6.1). It follows that 
      $\phi^{\mathrm{an}}$ restricts to a surjective map from each of the $A_i$ onto $\Sigma(C^{\mathrm{an}},\mathfrak{W}_1)$. 
       However since $A_1$ contains the set $(\phi^{\mathrm{an}})^{-1}(\Sigma(C^{\mathrm{an}},\mathfrak{W}_0))$
        we must have that $A = A_1$. It follows that $A$ is a connected graph 
       that contains the skeleton $\Sigma(C'^{\mathrm{an}},\mathfrak{V}_0)$, using which it can be checked
        that $C'^{\mathrm{an}} \smallsetminus A$ is the disjoint union of 
       sets each of which are isomorphic to Berkovich open balls. We claim that these open balls have radii belonging to $|k^*|$. 
       Let $D'$ be a connected component of $C'^{\mathrm{an}} \smallsetminus A$. 
       As the morphism $\phi^{\mathrm{an}}$ is 
       clopen and $A = (\phi^{\mathrm{an}})^{-1}(\Sigma(C^{\mathrm{an}},\mathfrak{W}_1))$, there exists a connected component $D$
       of $C^{\mathrm{an}} \smallsetminus \Sigma(C^{\mathrm{an}},\mathfrak{W}_1)$ such that the morphism 
      $\phi^{\mathrm{an}}$ restricts to a finite morphism from 
      $D'$ onto $D$. As $D$ is isomorphic to a Berkovich open ball of radius belonging to $|k^*|$, we must have that $D'$ is a Berkovich open 
      ball whose radius belongs to $|k^*|$. It follows that there exists a weak semistable vertex set $\mathfrak{V}_1$ in $C'^{\mathrm{an}}$ such that 
      $\Sigma(C'^{\mathrm{an}},\mathfrak{V}_1) = A$. The set $\mathfrak{W} := \mathfrak{W}_1 \cup \phi^{\mathrm{an}}(\mathfrak{V}_1)$ is a weak 
      semistable vertex set for $C^{\mathrm{an}}$ and $\Sigma(C^{\mathrm{an}},\mathfrak{W}) = \Sigma(C^{\mathrm{an}},\mathfrak{W}_1)$. Let 
      $\mathfrak{V} := (\phi^{\mathrm{an}})^{-1}(\mathfrak{W})$. As $\mathfrak{V}$ contains $\mathfrak{V}_1$ and is contained in 
      $\Sigma(C'^{\mathrm{an}},\mathfrak{V})$, we must have that $\mathfrak{V}$ is a weak semistable vertex set and 
      $\Sigma(C'^{\mathrm{an}},\mathfrak{V}) = \Sigma(C'^{\mathrm{an}},\mathfrak{V}_1)$. The pair 
      $\mathfrak{V},\mathfrak{W}$ satisfy the claims made in the lemma.

 \end{proof}

 \subsection{Lifting paths} 
 
       Let $\phi : C' \to C$ be a finite morphism between $k$-curves. A path in $C^{\mathrm{an}}$ is a continuous function 
       $u : [a,b] \to C^{\mathrm{an}}$ where $[a,b]$ is a real interval. To construct deformation retractions 
       which are compatible for the morphism $\phi$, we must understand to what extent certain paths on $C^{\mathrm{an}}$ can be lifted. 
       By a lift of a path, we mean the following. 
       
\begin{defi}
\emph{Let $a < b$ be real numbers and $u : [a,b] \to C^{\mathrm{an}}$ be a continuous function.} A lift of the path 
$u$ \emph{is a path $u' :[a,b] \to C'^{\mathrm{an}}$ such that $u = \phi^{\mathrm{an}} \circ u'$.}  
\end{defi}

\begin{lem} 
  Let $\phi : C' \to C$ be a finite separable morphism between irreducible projective smooth $k$-curves such that 
  the extension of function fields induced by $\phi$ is separable.    
    Let 
  $\mathfrak{V} \subset \mathfrak{W}$ be weak semistable vertex sets of $C^{\mathrm{an}}$.
  Assume that $\mathfrak{W}$ contains the set of $k$-points of $C$ over which the morphism $\phi$ is ramified. 
  Let $\mathfrak{W}' \subset C'^{\mathrm{an}}$ be a weak semistable 
  vertex set such that $\Sigma(C'^{\mathrm{an}},\mathfrak{W}') = (\phi^{\mathrm{an}})^{-1}(\Sigma(C^{\mathrm{an}},\mathfrak{W}))$. 
 Let $\lambda^\mathfrak{W}_{\Sigma(C^{\mathrm{an}},\mathfrak{V})} : [0,1] \times C^{\mathrm{an}} \to C^{\mathrm{an}}$ be the deformation retraction
 constructed in Proposition 2.21 
  whose image is $\Sigma(C^{\mathrm{an}},\mathfrak{W})$. We simplify notation 
  and  write $\lambda$ in place of $\lambda^\mathfrak{W}_{\Sigma(C^{\mathrm{an}},\mathfrak{V})}$. 
Let $p \in C^{\mathrm{an}}$. Let $\lambda^p : [0,1] \to C^{\mathrm{an}}$ be the path 
defined by $t \mapsto \lambda(t,p)$. Let $q \in (\phi^{\mathrm{an}})^{-1}(p)$. There exists a unique path 
$u : [0,1] \to C'^{\mathrm{an}}$ such that $u(0) = q$ and $\phi^{\mathrm{an}} \circ u = \lambda^p$.  
\end{lem} 
\begin{proof} 
We split the proof into two cases. 
\begin{enumerate}
\item \emph{Let $p \in C^{\mathrm{an}}$ be a point which is not of type IV.} 
 We can suppose that $p \notin \Sigma(C^{\mathrm{an}},\mathfrak{W})$ since when 
 $p \in \Sigma(C^{\mathrm{an}},\mathfrak{W})$, the path $\lambda^p$ is constant and hence can always be lifted. 
We show firstly that for every $t \in [0,1)$, there exists $\epsilon > 0$ such that 
  if $z' \in (\phi^{\mathrm{an}})^{-1}(\lambda^p(t))$ then 
  $\lambda^p_{|[t,t + \epsilon]}$ lifts \emph{uniquely} to a path starting from $z'$.  
  
  Suppose $z := \lambda^p(t)$ was a point of type I. 
  By construction of $\lambda$, we must have that $z = p$ and $t = 0$. 
   Our choice of weak semistable 
  vertex sets implies that $\phi$ is étale over $\lambda^p(t)$. It follows from Hensel's lemma that there 
  exists neighborhoods $V_{z'}$ in $C'^{\mathrm{an}}$ around $z'$ and $V_p$ around $p$ such that 
  $\phi^{\mathrm{an}}$ restricts to a homeomorphism from $V_{z'}$ onto $V_p$. We conclude from this fact that 
  there does indeed exist $\epsilon > 0$ such that $\lambda^p_{|[0,\epsilon]}$ lifts uniquely to a path starting from $z'$.
 
 Let $z := \lambda^p(t)$ be a point of type II or III. 
 By construction, for every $s \in [t,1]$, $\lambda^p(s) = \lambda^z(s)$ where 
 $\lambda^z : [0,1] \to C^{\mathrm{an}}$ is the path defined by $s \mapsto \lambda(s,z)$. 
 Furthermore, for every $s \in [0,t]$, $\lambda^z(s) = z$. 
 It suffices to show that there exists $\epsilon > 0$ such that the 
 path $\lambda^z_{|[t,t + \epsilon]}$ lifts uniquely to a path starting from $z'$. 
 If there exists $\epsilon > 0$ such that $\lambda^z_{|[t,t + \epsilon]}$ is constant
 then our claim is obviously true. Let us hence suppose no such $\epsilon$ exists. 
   
  By assumption, $z \notin \Sigma(C^{\mathrm{an}},\mathfrak{W})$. 
  It follows that $z' \in C'^{\mathrm{an}} \smallsetminus \Sigma(C'^{\mathrm{an}},\mathfrak{W}')$.  
  Recall that we used $\mathcal{D}$ to denote the set of connected components of the space 
  $C^{\mathrm{an}} \smallsetminus \Sigma(C^{\mathrm{an}},\mathfrak{W})$ and
  when constructing the 
  deformation $\lambda$
  we identified each $D \in \mathcal{D}$ with a Berkovich open ball whose radius belongs to the value group $|k^*|$.
   Likewise, let $\mathcal{D}'$ denote the set of connected components of the space 
  $C'^{\mathrm{an}} \smallsetminus \Sigma(C'^{\mathrm{an}},\mathfrak{W}')$. We identify each $D' \in \mathcal{D}'$ 
  with a Berkovich open ball of unit radius.
  Let $D \in \mathcal{D}$ be such that $z \in D$ and $D' \in \mathcal{D}'$ such that 
  $z' \in D'$. By Lemma 2.30, we have that $\phi^{\mathrm{an}}(D') = D$ and in addition
  $\phi^{\mathrm{an}}$ restricts to an open and closed map on $D'$.  
   By construction of the deformation retraction $\lambda$, there exists 
  $\beta \in [0,1]$ such that for every $x \in D$,
  $\lambda(s,x) \in D$ when $s \in [0,\beta)$ i.e. $\lambda : [0,\beta) \times D \to D$ is well defined and 
  $\lambda(s,x) = \lambda(\beta,x)$ when $s \in [\beta,1]$. 
Our assumption that there does not exist $\epsilon > 0$ such that 
$\lambda^z_{|[t,t + \epsilon]}$ is constant and the construction of $\lambda$ imply that
 the path $\lambda^z_{|[t,\beta]}$ is injective and that
  $\lambda^z([t,\beta]) \subset \mathbf{H}(C^{\mathrm{an}})$. Furthermore, the composition 
  $\lambda^z \circ (-\mathrm{exp}) : [-\mathrm{log}(\beta),-\mathrm{log}(t)] \to C^{\mathrm{an}}$ is an isometry (cf. Remark 2.22).
  
      Let $r \in |k^*|$ denote the radius of the ball $D$. 
   The point $z$ must be of the form $\eta_{a,t}$ for some $a \in D$. 
   Likewise, the point $z' \in D'$ must be of the form $\eta_{b,t'}$ for some $b \in D'$ and 
   $t' \in (0,1)$.
     We may choose $b$ so that $\phi^{\mathrm{an}}(b) = a$. 
    We show now that we can reduce to the case when $a = b = 0$. 
   The translation automorphism
   $t_{-a}: O(0,r) \to O(0,r)$ defined by
    $x \mapsto x - a$ induces an automorphism $t_{-a}^{\mathrm{an}} : D \to D$ that
    maps the point $z$ to 
   $\eta_{0,t}$. We have that $\lambda : [0,\beta) \times D \to D$ is well defined and 
   it can be checked that $\lambda$ is $t_{-a}^{\mathrm{an}}$ invariant i.e. 
   for every $s \in [0,\beta)$ and $x \in D$, 
   $t_{-a}^{\mathrm{an}}(\lambda(s,x)) = \lambda(s,t_{-a}^{\mathrm{an}}(x))$.   
  Similarly, let $t_{-b} : O(0,1) \to O(0,1)$ denote the translation morphism 
  $y \mapsto y - b$. The map $t_{-b}$ induces an automorphism $t_{-b}^{\mathrm{an}} : D' \to D'$ that 
  maps $z' = \eta_{b,t'}$ to $\eta_{0,t'}$.
  Let $\phi^{\mathrm{an}}_{D'}$ denote the restriction of $\phi^{\mathrm{an}}$ to $D'$. As $t_{-a}^{\mathrm{an}}$ and 
  $t_{-b}^{\mathrm{an}}$ are automorphisms,
  there 
  exists a morphism $f : D' \to D$ such that the following diagram commutes. 
  
  \setlength{\unitlength}{1cm}
\begin{picture}(10,5)
\put(3.3,1){$D$}
\put(6,1){$D$}
\put(3.3,3.5){$D'$}
\put(6,3.5){$D'$}
\put(3.4,3.3){\vector(0,-1){1.75}}
\put(6.1,3.3){\vector(0,-1){1.75}}
\put(3.9,1.1){\vector(1,0){1.9}}
\put(3.9,3.6){\vector(1,0){1.9}}
\put(4.7,0.7){$t^{\mathrm{an}}_{-a}$}
\put(4.7,3.2){$t^{\mathrm{an}}_{-b}$}
\put(3.5,2.3){$\phi_{D'}^{\mathrm{an}}$}
\put(6.2,2.3){$f$}
\end{picture}   
  
   Suppose there exists $\epsilon > 0$ and a unique path 
   $u' : [t,t+\epsilon]$ starting from $\eta_{0,t'}$ such that 
   $f \circ u' = \lambda^{\eta_{0,t}}_{|[t,t+\epsilon]}$. The commutativity of the above diagram
   and the fact that $t_{-a}^{\mathrm{an}}$ and $t_{-b}^{\mathrm{an}}$ are automorphisms 
   imply that there exists a unique path $u : [t,t+\epsilon]$ starting at $z'$ such that $\phi^{\mathrm{an}} \circ u = \lambda^z_{|[t,t+\epsilon]}$.  
    We may hence assume that $z = \eta_{0,t}$ and $z' = \eta_{0,t'}$ and that $\phi^{\mathrm{an}}(0) = 0$.   
   
   Let $F \subset D'$ denote the set $(\phi^{\mathrm{an}})^{-1}(0)$. Let $\mathfrak{A} \subset D'$ be 
   a finite set of points of type II with the following property. 
   Let $\mathcal{C}$ denote the set of connected components of
    $D' \smallsetminus (\mathfrak{A} \cup F)$. 
    There exists a finite set $\Upsilon \subset \mathcal{C}$ such that if $A \in \Upsilon$ then 
    $A$ is isomorphic to a standard open annulus or a standard punctured Berkovich open disk and 
    if $A \in \mathcal{C} \smallsetminus \Upsilon$ then $A$ is isomorphic to 
    Berkovich open ball with radius in $|k^*|$.
    Let 
   $\Sigma(D')$ be the union of $\mathfrak{A} \cup F$ and the skeleton of every element $A \in \Upsilon$.  
  By assumption, we must have that $z' \in \Sigma(D')$. Suppose, $z'$ is not a vertex of the finite graph $\Sigma(D')$. It follows that there 
  exists a standard open annulus $A' \in \Upsilon$ that contains $z$ and in particular does not intersect $F$.
   The map $\phi^{\mathrm{an}}$ restricts to a morphism 
  $ A' \to D \smallsetminus \{0\}$. By [\cite{BPR}, Proposition 2.5], 
  $\phi^{\mathrm{an}}(A') \subset D \smallsetminus \{0\}$ must be a standard open annulus $A$ as well and
  $\phi^{\mathrm{an}}(\Sigma(A')) = \Sigma(A)$. By assumption, we have that $z \in \Sigma(A)$ and 
  for small enough $\epsilon > 0$, the path $\lambda^z_{|[t,t + \epsilon]}$ is contained in 
  $\Sigma(A)$. Recall that we defined sections $\sigma : \mathrm{trop}(A) \to \Sigma(A)$ and 
  $\sigma : \mathrm{trop}(A') \to \Sigma(A')$ 
   of the tropicalization maps 
   $\mathrm{trop} : \Sigma(A) \to \mathrm{trop}(A)$ and $\mathrm{trop} : \Sigma(A') \to \mathrm{trop}(A')$ (cf. 2.2.2).
   By definition of the tropicalization map, 
   we must have that $[-\mathrm{log}(t),-\mathrm{log}(t + \epsilon)] \subset \mathrm{trop}(A)$.  
  By construction, $\lambda^z \circ -\mathrm{exp} : [-\mathrm{log}(t), -\mathrm{log}(t + \epsilon)] \to \Sigma(A)$ coincides with 
  $\sigma_{|[-\mathrm{log}(t), -\mathrm{log}(t + \epsilon)]}$. As $\sigma$ is a homeomorphism, the morphism 
  $\phi^{\mathrm{an}}_{|\Sigma(A')} : \Sigma(A') \to \Sigma(A)$
   induces a map $\phi^{\mathrm{trop}} : \mathrm{trop}(A') \to \mathrm{trop}(A)$. By loc.cit, 
  there exists a non zero $d \in \mathbb{Z}$ such that $\phi^{\mathrm{trop}}$ is of the form 
  $d(.) + -\mathrm{log}(|\delta|)$ for some $\delta \in k^*$. Let $(\phi^{\mathrm{trop}})^{-1}$ denote the inverse of $\phi^{\mathrm{trop}}$. 
  It can be verified that $u := \sigma \circ (\phi^{\mathrm{trop}})^{-1} \circ -\mathrm{log} : [t,t+\epsilon] \to D'$ is a lift of 
  $\lambda^z_{|[t,t+\epsilon]}$ starting from $z'$.
  In fact, $u$ is the unique lift of $\lambda^z_{|[t,t+\epsilon]}$ starting from $z'$. Indeed, by Proposition 2.5 in loc.cit.,
   it can be deduced that $\phi^{\mathrm{an}}(A' \smallsetminus \Sigma(A')) \subset A \smallsetminus \Sigma(A)$ 
  and furthermore, the map $\phi^{\mathrm{an}}$ restricted to $\Sigma(A')$ is a bijection onto $\Sigma(A)$. As 
  $\lambda^z_{|[t,t+\epsilon]}$ is a path along $\Sigma(A)$, $u$ must be a lift along $\Sigma(A')$ and hence unique.

   Suppose $z' = \eta_{0,t'} \in \Sigma(D')$ is a vertex. We must have that $z'$ is a type II point.
  It follows that there exists a standard open annulus $A' \in \Upsilon$ with inner radius $t'$ and outer radius belonging to $|k^*|$.
  By \cite{BPR},
   the image $\phi^{\mathrm{an}}(A')$ is a standard open annulus $A \subset D$ and we choose $\epsilon > 0$ small enough so that 
   $\lambda^z_{|(t,t+\epsilon]}$ is a path contained in $\Sigma(A)$. We now proceed as above to 
   obtain a lift $u$ of $\lambda^z_{|(t,t+\epsilon]}$ starting from $z'$ and for reasons mentioned above it is the unique lift contained 
   in $\Sigma(A)$. It remains to show that 
    $u$ is the unique lift in $C'^{\mathrm{an}}$ and this requires an additional argument for the following reason. 
   Observe that the annulus $A$ contains exactly one element of the tangent space $T_{z'}$, whereas previously 
   when $z'$ was in the interior of $A$, every tangent direction was contained in $A$.  
   It follows that although $u$ might be the unique lift in $A$, it might not be the only lift in $C'^{\mathrm{an}}$. 
   However, it is the only possible lift since by Lemma 2.33, the Berkovich closed ball $\mathbf{B}(0,t')$ maps 
   onto the closed ball $\mathbf{B}(0,t)$ and the path $\lambda^z_{|(t,t + \epsilon]}$ is contained in the complement 
   of the $\mathbf{B}(0,t)$. Hence any lift of the $\lambda^z_{|(t,t + \epsilon]}$ must be contained entirely in $A$. 
   This proves that the lift $u$ is unique.  
   
    Let $L$ denote the set of  
    $s \in [0,1]$ for which there exists a lift of the path $\lambda^p_{|[0,s]}$ starting from $q$. 
    Let $s \in L$. We claim that there exists a unique lift of the path $\lambda^p_{|[0,s]}$ starting from $q$. 
    Let $u',u$ be two lifts of the path $\lambda^p_{|[0,s]}$ starting from $q$. Let $s_0 \in [0,s]$ be the largest 
    real number such that $u'_{|[0,s_0]} = u_{|[0,s_0]}$. Let $q' := u'(s_0) = u(s_0)$ and $p' := \phi^{\mathrm{an}}$. By 
    the first part of the proof, there exists
     $\epsilon > 0$ and
     a unique lift $v$ of the path $\lambda^p_{|[s_0,s_0 + \epsilon]}$. As $u'_{|[s_0,s_0 + \epsilon]}$ and 
     $u_{|[s_0,s_0 + \epsilon]}$ are both lifts of $\lambda^p_{|[s_0,s_0 + \epsilon]}$, we must have that 
     $v = u_{|[s_0,s_0 + \epsilon]} = u'_{|[s_0,s_0 + \epsilon]}$. This implies that 
     $u_{|[0,s_0 + \epsilon]} = u'_{|[0,s_0 + \epsilon]}$ which contradicts our assumption on $s_0$. We have thus verified our claim. It
     can be deduced from this that the set $L$ contains its supremum.    
    Let $t$ 
    be the largest real number in $[0,1]$
    such that the path $\lambda^p_{|[0,t]}$ lifts to a path $u : [0,t] \to C'^{\mathrm{an}}$ starting from $q$. 
    Suppose $t < 1$.
    By the first half of the
    proof, there exists
     a small real number $\epsilon$ such that $\lambda^p_{|[t,t + \epsilon]}$ lifts to a path starting from $u' : [t,t+\epsilon] \to C'^{\mathrm{an}}$ starting from 
    $u(t)$. Glueing $u$ and $u'$ gives a path $u'' : [0,t+\epsilon] \to C'^{\mathrm{an}}$ which lifts $\lambda^p_{|[0,t+\epsilon]}$. 
    Hence we have a contradiction to our assumption that $t < 1$. Therefore there exists a unique lift of the path $\lambda^p$.

\item  \emph{Let $p$ be a point of type IV.}   
    We must have that
   $p \notin \Sigma(C^{\mathrm{an}},\mathfrak{W})$.
   We make use of the notation introduced in part (1) of the proof. 
   There exists $D \in \mathcal{D}$ such that $p \in D$. 
   The path $\lambda^p$ is injective on $[a,b]$.
   Let $U$ be a connected neighborhood in $D$ 
   of the point $p$ such that $(\phi^{\mathrm{an}})^{-1}(U)$ decomposes into the disjoint union of 
   connected open sets $\{U_1,\ldots,U_m\}$ and each $U_i$ contains exactly one preimage of the point $p$.
   We claim that we can shrink $U$ and choose $a < t'_1 < b$ such that
    $\lambda^p([0,t'_1]) \subset U$ and
     for every $x \in \lambda^p([0,t'_1])$ there exists 
   exactly one preimage of $x$ in each of the $U_i$. 
   This can be accomplished as follows. We show that there exists $t' \in (a,b)$ such that for every 
   element $x \in \lambda^p([a,t'))$ the cardinality of the set $(\phi^{\mathrm{an}})^{-1}(x)$ is constant. We can then shrink
    $U$ so that it does not intersect $\lambda^p([t',b])$ and choose $t'_1 \in (a,t')$ suitably small.
    Such a $U$ must satisfy the claim since the morphism $\phi^{\mathrm{an}}$ being closed and open (cf. Lemma 6.1)
    is surjective from each of the $U_i$ onto $U$. 
    Observe that if $t_1,t_2 \in (a,s]$ are such that $t_1 < t_2$ then 
   the number of preimages of $\lambda^p(t_1)$ is greater than or equal to the number of preimages of 
   $\lambda^p(t_2)$. This follows from the uniqueness of lifts from part (1) and 
   that if $P$ is a lift of the path $\lambda^p_{|[t_1,s]}$ then $P_{|[t_2,s]}$ is a lift of the path $\lambda^p_{|[t_2,s]}$. 
   As the morphism $\phi^{\mathrm{an}}$ is finite, there exists $t' \in (a,b)$ such that the number of preimages of 
   every point $x \in \lambda^p((a,t'))$ is constant. The preimages in $C'^{\mathrm{an}}$ of the point $p$ are of type IV and 
   the tangent space at any such point is a single element. It follows that the number of preimages of every point in      
   $\lambda^p([a,t'))$ is a constant. Let $t'_1 \in (a,t')$ be such that $\lambda^p(t') \in U$. This verifies the claim.       
   We suppose without loss of generality
   that $q \in U_1$.  
   We show firstly that the path $\lambda^p_{[0,t'_1]}$ can be lifted to a path in $C^{\mathrm{an}}$ starting from 
   $q$. It suffices to show that the path $\lambda^p_{[a,t'_1]}$ can be lifted to a path in $C^{\mathrm{an}}$ starting from 
   $q$.
    Let $I$ denote the set of real numbers $r \in [a,t'_1]$ for which there exists a lift $P_r$ of the path 
    $\lambda^p_{|[r,t'_1]}$ contained in $U_1$. As $U_1$ contains exactly 
    one preimage of the point $\lambda^p(r)$, the uniqueness of lifts from part (1) of the proposition implies that the set $I$ is closed. 
    Let $t_0$ denote the smallest element of the set $I$. 
        Suppose $t_0 > a$. 
   Let $a' \in (a,t_0)$ and $p' := \lambda^p(a')$. By construction, $p'$ is not a point of type IV.
   Let $q'$ be the unique preimage of $p'$ in $U_1$. 
    By part (1), there exists a lift $P'$ of the path $\lambda^{p'}_{|[a',t'_1]}$ starting from $q'$. By construction, 
    $\lambda^{p'}_{|[t_0,t'_1]}$ coincides with $\lambda^p_{|[t_0,t'_1]}$. As the lifts are unique, we must have that 
    $P'_{|[t_0,t'_1]} = P$. This is a contradiction to our assumption that $t_0 > a$. It follows that there exists a lift of the path 
    $\lambda^p_{|[a,t'_1]}$ in $U_1$ which in turn implies that there exists a lift of the path $\lambda^p_{|[0,t'_1]}$ in $U_1$ as 
    $\lambda^p_{|[0,a]}$ is constant. 
      We abuse notation and 
    refer to this lift as $P'$ as well. 
    Let $p'' := \lambda^p(t'_1)$. By construction, $\lambda^{p''}_{|[t'_1,1]}$ coincides with the path $\lambda^p_{|[t'_1,1]}$. Let 
    $q'' := P'(t'_1)$. By part (1), there exists a lift $P''$ of the path $\lambda^{p''}_{|[t'_1,1]}$ starting from $q''$. Glueing the paths 
    $P'$ and $P''$ results in a lift of the path $\lambda^p$ starting from 
    $q$.        
   \end{enumerate} 
\end{proof}

 \subsection{Finite morphisms to $\mathbb{P}^1_k$}
   
         Let $C$ be a smooth projective irreducible $k$-curve. Let $\phi : C \to \mathbb{P}^1_k$ be a finite morphism such that 
         the extension of function fields $k(\mathbb{P}^1_k) \hookrightarrow k(C)$ induced by $\phi$ is separable.  
          Let $R$ be the finite set of $k$-points of $\mathbb{P}^1_k$ over which the morphism 
$\phi$ is ramified. Let
$\mathfrak{W}$ be a weak semistable vertex set that contains $R$ such that 
$\mathfrak{V} := (\phi^{\mathrm{an}})^{-1}(\mathfrak{W})$ is a weak semistable vertex set for $C^{\mathrm{an}}$ and 
$\Sigma(C^{\mathrm{an}},\mathfrak{V}) = (\phi^{\mathrm{an}})^{-1}(\Sigma(\mathbb{P}^{1,\mathrm{an}}_k,\mathfrak{W}))$.
By Proposition 2.21, there exists a deformation retraction 
\begin{align*}
\lambda_{\Sigma(\mathbb{P}_k^{1,\mathrm{an}},\mathfrak{W})} : [0,1] \times \mathbb{P}_k^{1,\mathrm{an}} \to \mathbb{P}_k^{1,\mathrm{an}} 
 \end{align*}
 whose image is the skeleton $\Sigma(\mathbb{P}_k^{1,\mathrm{an}},\mathfrak{W})$.   
 Recall that for a point $p \in \mathbb{P}^{1,\mathrm{an}}_k$, the deformation retraction $\lambda_{\Sigma(\mathbb{P}_k^{1,\mathrm{an}},\mathfrak{W})}$
 defines a path $\lambda^p_{\Sigma(\mathbb{P}_k^{1,\mathrm{an}},\mathfrak{W})} : [0,1] \to \mathbb{P}^{1,\mathrm{an}}_k$ by 
 $t \mapsto \lambda_{\Sigma(\mathbb{P}_k^{1,\mathrm{an}},\mathfrak{W})}(t,p)$.   
          We are now in a position to prove Theorem 3.1 for the morphism $\phi : C \to \mathbb{P}^1_k$. We suppose in addition that the 
       extension of function fields $k(\mathbb{P}^1_k) \hookrightarrow k(C)$ induced by $\phi$ is Galois.

 \begin{prop} 
 Let $C$ be a smooth projective irreducible curve and let $\phi : C \to \mathbb{P}^1_k$
  be a finite  morphism such that the extension of function fields $k(\mathbb{P}^{1}_k) \hookrightarrow k(C)$ is Galois.  
  Let $\mathfrak{W}$ be a weak semistable vertex set for $\mathbb{P}^{1,\mathrm{an}}_k$ that contains the 
 closed points over which the morphism is ramified such that 
$\mathfrak{V} := (\phi^{\mathrm{an}})^{-1}(\mathfrak{W})$ is a weak semistable vertex set for $C^{\mathrm{an}}$ and 
$\Sigma(C^{\mathrm{an}},\mathfrak{V}) = (\phi^{\mathrm{an}})^{-1}(\Sigma(\mathbb{P}^{1,\mathrm{an}}_k,\mathfrak{W}))$.
  There exists a pair of compatible deformation retractions 
$\psi' : [0,1] \times C^{\mathrm{an}} \to C^{\mathrm{an}}$ 
and $\psi : [0,1] \times \mathbb{P}_k^{1,\mathrm{an}} \to \mathbb{P}_k^{1,\mathrm{an}}$ 
whose images are the connected finite graphs $\Sigma(C^{\mathrm{an}},\mathfrak{V})$ and 
$\Sigma(\mathbb{P}_k^{1,\mathrm{an}},\mathfrak{W})$ respectively. 
 \end{prop}
\begin{proof}
 Let $\psi := \lambda_{\Sigma(\mathbb{P}^{1,\mathrm{an}}_k,\mathfrak{W})}$. 
 We define the deformation retraction $\psi' : [0,1] \times C^{\mathrm{an}} \to C^{\mathrm{an}}$ as follows. Let 
 $q' \in C^{\mathrm{an}}$ and $q := \phi^{\mathrm{an}}(q')$. By Lemma 3.5, there exists a 
 unique lift $\psi'^{q'}$ of the path $\psi^q$ starting at $q'$. 
For $t \in [0,1]$ and $q' \in C'^{\mathrm{an}}$, we set 
$\psi'(t,q') := \psi'^{q'}(t)$. The uniqueness of the lifts $\psi'^{q'}$ imply that $\psi'$ is well defined. Let 
$G = \mathrm{Gal}(k(C)/k(\mathbb{P}^1_k))$. The uniqueness of the lift implies that 
for every $g \in G$, $g \circ \psi'^{q'} = \psi'^{g(q')}$.
 It follows that for every $t \in [0,1]$, 
$g(\psi'(t,q')) = \psi'(t,g(q'))$.
The compatibility of $\psi'$ and $\psi$ implies that $\psi'(1,C^{\mathrm{an}})$ is equal to 
 $(\phi^{\mathrm{an}})^{-1}(\Sigma(\mathbb{P}_k^{1,\mathrm{an}},\mathfrak{W})) = \Sigma(C^{\mathrm{an}},\mathfrak{V})$.    
 The continuity of $\psi'$ follows from 2.36. 
\end{proof}

   We show that Theorem 3.1 can be deduced from Proposition 3.6. 
   
\begin{proof} 
   Let $\phi : C' \to C$ be a finite morphism between smooth projective irreducible $k$-curves. 
It suffices to prove the theorem when the extension of function fields
$k(C) \hookrightarrow k(C')$
 induced by the morphism 
$\phi$ is separable. Indeed, the extension $k(C) \hookrightarrow k(C')$ can be decomposed into a
separable field extension $k(C)  \hookrightarrow L$ and a purely inseparable extension $L \hookrightarrow k(C')$. 
Let $C''$ denote the smooth projective irreducible $k$-curve that corresponds to the function field $L$. 
The corresponding morphism of curves $C' \to C''$ and its analytification 
$C'^{\mathrm{an}} \to C''^{\mathrm{an}}$ are homeomorphisms. If $\mathfrak{V}''$ is a weak semistable vertex set for 
$C''^{\mathrm{an}}$ then its preimage $\mathfrak{V}'$ in $C'^{\mathrm{an}}$ is a weak semistable vertex set as well and 
a deformation retraction of $C''^{\mathrm{an}}$ with image $\Sigma(C''^{\mathrm{an}},\mathfrak{V}'')$ lifts to a deformation 
retraction on $C'^{\mathrm{an}}$ with image $\Sigma(C'^{\mathrm{an}},\mathfrak{V}')$. 
 
  Let $a : C \to \mathbb{P}^1_k$ be a finite separable morphism and let $K$ be a finite Galois extension 
  of $k(\mathbb{P}^1_k)$ that contains $k(C')$. Let $C''$ denote the smooth projective irreducible curve 
  corresponding to the function field $K$. By construction we have the following sequence of morphisms : 
  $C'' \to C' \to C \to \mathbb{P}^1_k$. Let $c : C'' \to \mathbb{P}^1_k$ denote this composition.
Using Lemma 3.3, it can be checked that there exists a weak semistable vertex set
  $\mathfrak{A}$ for $\mathbb{P}^{1,\mathrm{an}}_k$ that contains the points over which the morphism $c : C'' \to \mathbb{P}^1_k$ 
  is ramified  
  and in addition that $(a^{\mathrm{an}})^{-1}(\mathfrak{A})$ is a weak semistable vertex set 
  of $C^{\mathrm{an}}$, ${(a \circ \phi)^{\mathrm{an}}}^{-1}(\mathfrak{A})$ is a weak semistable vertex set of 
  $C'^{\mathrm{an}}$ and $(c^{\mathrm{an}})^{-1}(\mathfrak{A}) = \mathfrak{A}''$ is a weak semistable vertex set 
  for $C''^{\mathrm{an}}$. Furthermore, 
  $\Sigma(C^{\mathrm{an}},(a^{\mathrm{an}})^{-1}(\mathfrak{A})) = (a^{\mathrm{an}})^{-1}(\Sigma(\mathbb{P}^{1,\mathrm{an}}_k,\mathfrak{A}))$,  
  $\Sigma(C'^{\mathrm{an}},{(a \circ \phi)^{\mathrm{an}}}^{-1}(\mathfrak{A})) = {(a \circ \phi)^{\mathrm{an}}}^{-1}(\Sigma(\mathbb{P}^{1,\mathrm{an}}_k,\mathfrak{A}))$ 
  and $\Sigma(C''^{\mathrm{an}},\mathfrak{A}'') = (c^{\mathrm{an}})^{-1}(\Sigma(\mathbb{P}^{1,\mathrm{an}}_k,\mathfrak{A}))$.  
  
       The deformation retraction $\lambda_{\Sigma(\mathbb{P}^{1,\mathrm{an}}_k,\mathfrak{A})}$
       has image $\Sigma(\mathbb{P}^{1,\mathrm{an}}_k,\mathfrak{A})$ and
        lifts to a deformation 
       retraction $\lambda''^{\mathfrak{A}''}$ with image $\Sigma(C''^{\mathrm{an}},\mathfrak{A}'')$. 
       Let $G := \mathrm{Gal}(k(C'')/k(\mathbb{P}^1_k))$. 
       The deformation retraction $\lambda''^{\mathfrak{A}''}$ is $G$-invariant. There exists sub groups $H_{C'} \subset G$ and 
       $H_{C} \subset G$ such that $C'' \to C'$ and $C'' \to C$ are the quotient morphisms $C'' \to C''/H_{C'}$ and 
       $C'' \to C''/H_{C}$. As $\lambda''^{\mathfrak{A}''}$ is $H_{C}$ and $H_{C'}$ invariant, it must induce 
       deformations $\lambda_{C}$ and $\lambda_{C'}$ whose images are 
      $(a^{\mathrm{an}})^{-1}(\Sigma(\mathbb{P}^{1,\mathrm{an}}_k,\mathfrak{A}))$ and
       $((a \circ \phi)^{\mathrm{an}})^{-1}(\Sigma(\mathbb{P}^{1,\mathrm{an}}_k,\mathfrak{A}))$ 
        respectively. 
      This proves the theorem. 
 \end{proof}

\section{Calculating the genera $g^{\mathrm{an}}(C')$ and $g^{\mathrm{an}}(C)$} 

     In the previous sections we showed that given a $k$-curve, there exists a deformation retraction 
     of the curve onto a closed subspace which is a finite metric graph.
     We called such subspaces skeleta.  
      In Definition 2.26, we introduced the genus of a skeleton and by Proposition 2.25 it is independent of the weak semistable vertex sets that 
      define it, implying that it is in fact an invariant of the curve. 
       In what follows we study how these invariants relate to each other given a finite morphism between the spaces they are associated to. 
     
          The theorem that follows is analogous to the Riemann-Hurwitz formula in algebraic geometry. We 
          introduce the notation involved in the statement of 4.1. 
  Let $\phi : C' \to C$ be a finite separable morphism between smooth projective curves over the field $k$.      
\subsection{Notation}      
We define the genus of a point $p \in C^{\mathrm{an}}$ as follows. If $p \in C^\mathrm{an}$ is of type II then let $g_p$ denote
 the genus of the smooth projective curve $\tilde{C}_p$ which corresponds to the
  $\tilde{k}$-function field $\widetilde{\mathcal{H}(p)}$ and if $p \in C^{\mathrm{an}}$ is not of type II then we set $g_p = 0$. 
  
       Let $p' \in C'^{\mathrm{an}}$ which is of type II and let $p := \phi^{\mathrm{an}}(p')$. The $\tilde{k}$-function fields $\widetilde{\mathcal{H}(p')}$, 
       $\widetilde{\mathcal{H}(p)}$ define smooth projective $\tilde{k}$-curves $\tilde{C}'_{p'}$, $\tilde{C}_p$ respectively. The morphism 
       $\phi^{\mathrm{an}}$ induces an injection $\widetilde{\mathcal{H}(p)} \hookrightarrow \widetilde{\mathcal{H}(p')}$ which implies a morphism 
       $\tilde{C'}_{p'} \to \tilde{C}_p$. This morphism is not necessarily separable. The extension $\widetilde{\mathcal{H}(p)} \hookrightarrow \widetilde{\mathcal{H}(p')}$
       can be decomposed so that there exists an intermediate $\tilde{k}$-function field $\widetilde{I(p',p)}$ and the extension 
       $\widetilde{\mathcal{H}(p)} \hookrightarrow \widetilde{I(p',p)}$ is purely inseparable while 
       $\widetilde{I(p',p)} \hookrightarrow \widetilde{\mathcal{H}(p')}$ is separable of degree $s(p',p)$. Such a decomposition exists by \cite{MOU}. 
       Let $\tilde{C}_{p',p}$ denote the smooth projective $\tilde{k}$-curve which corresponds to the field 
       $\widetilde{I(p',p)}$. By construction, the genus of the curve $\tilde{C}_{p',p}$ is equal to $g_p$. 
       
             The finite separable morphism $\tilde{C'}_{p'} \to \tilde{C}_{p',p}$ can be 
             used to relate the genera of the two curves via the Riemann-Hurwitz formula. 
        As in [\cite{hart}, IV.2], 
         let 
         \begin{align*}
         R_{p',p} :=  \Sigma_{P \in \tilde{C'}_{p'}} \mathrm{length} (\Omega_{\tilde{C'}_{p'}/\tilde{C}_{p',p}})_P .P      
        \end{align*}
             and 
           \begin{align*}
         R :=  \Sigma_{P \in C'} \mathrm{length} (\Omega_{C'/C})_P .P.     
        \end{align*}   
        
            We define invariants on the points of $C^{\mathrm{an}}$ which relate the values 
            $g^{\mathrm{an}}(C'), g^{\mathrm{an}}(C)$ from Definition 2.26. For $p \in C^{\mathrm{an}}$ of type II, let
          \begin{align*} 
          s(p) &:= \Sigma_{p' \in (\phi^{\mathrm{an}})^{-1}(p)} s(p',p), \\ 
          R^1_{p',p} &:= \mathrm{deg}(R_{p',p}) - (2s(p',p) - 2), \\ 
          R^1_p &:=  \Sigma_{p' \in (\phi^{\mathrm{an}})^{-1}(p)} R^1_{p',p}.
          \end{align*}  
          
                  When $p$ is not of type II, let $s(p)$ be the cardinality of the fibre $(\phi^{\mathrm{an}})^{-1}(p)$ and $R^1_p := 0$. 
                  
   \subsection{A Riemann-Hurwitz formula for the analytic genus}                 
            
            \begin{thm} 
      Let $\phi : C' \to C$ be a finite separable morphism between
       smooth projective curves over the field $k$. Let $g^{\mathrm{an}}(C'), g^{\mathrm{an}}(C)$ be as in Definition 2.26.
  We have the following equation. 
  \begin{align*}
     2g^{\mathrm{an}}(C') - 2 = \mathrm{deg}(\phi)(2g^{\mathrm{an}}(C) - 2) +
      \Sigma_{p \in C^{\mathrm{an}}} 2 s(p) g_p  + \mathrm{deg}(R)  - \Sigma_{p \in C^{\mathrm{an}}} R^1_p.
                              \end{align*}
     \end{thm}     
\begin{proof} 
          In order to prove Theorem 4.1, we make use of the fact that there exists a pair of 
          deformation retractions $\psi' : [0,1] \times C'^{\mathrm{an}} \to C'^{\mathrm{an}}$ and
           $\psi : [0,1] \times C^{\mathrm{an}} \to C^{\mathrm{an}}$ 
          which are compatible with the morphism $\phi^{\mathrm{an}}$ (Theorem 3.1). Let 
          $\Upsilon_{C'^{\mathrm{an}}}$ and $\Upsilon_{C^{\mathrm{an}}}$ denote the 
          images of the deformation retractions $\psi'$ and $\psi$ respectively. We can assume that $\Upsilon_{C^{\mathrm{an}}}$ contains the 
          ramification locus of the morphism $\phi$. 
                    Furthermore, there exists weak semistable vertex sets
         $\mathfrak{A} \subset C^{\mathrm{an}}$ and $\mathfrak{A}' \subset C'^{\mathrm{an}}$ such that 
        $\Upsilon_{C^{\mathrm{an}}} = \Sigma(C^{\mathrm{an}},\mathfrak{A})$ and 
          $\Upsilon_{C'^{\mathrm{an}}} = \Sigma(C'^{\mathrm{an}},\mathfrak{A}')$. 
             
             We identify a  set of vertices $V(\Upsilon_{C^{\mathrm{an}}}), V(\Upsilon_{C'^{\mathrm{an}}})$ 
              for the skeleta $\Upsilon_{C^{\mathrm{an}}}$
and $\Upsilon_{C'^{\mathrm{an}}}$ which satisfy the following conditions. 

\begin{enumerate}
 \item  $V(\Upsilon_{C'^{\mathrm{an}}}) = (\phi^{\mathrm{an}})^{-1}(V(\Upsilon_{C^{\mathrm{an}}})).$
  \item  
         $\mathfrak{A} \subset V(\Upsilon_{C^{\mathrm{an}}})$ and 
$\mathfrak{A'} \subset V(\Upsilon_{C'^{\mathrm{an}}})$.  
\item  If $p$ (resp. $p'$) is a point on the skeleton $\Upsilon_{C^{\mathrm{an}}}$ (resp. $\Upsilon_{C'^{\mathrm{an}}}$) for which there exists a sufficiently     
small open neighbourhood $U \subset \Upsilon_{C^{\mathrm{an}}}$
 (resp. $U' \subset \Upsilon_{C'^{\mathrm{an}}}$)
such that $U \smallsetminus \{p\}$ (resp. $U' \smallsetminus \{p'\}$) has atleast three connected components then 
$p \in V(\Upsilon_{C^{\mathrm{an}}})$ (resp. $p' \in V(\Upsilon_{C'^{\mathrm{an}}})$).                
\end{enumerate}

       It can be verified that a pair $(V(\Upsilon_{C^{\mathrm{an}}}),V(\Upsilon_{C'^{\mathrm{an}}}))$
satisfying these properties does indeed exist. 
We define the set of edges $E(\Upsilon_{C^{\mathrm{an}}})$ (resp. $E(\Upsilon_{C'^{\mathrm{an}}})$)
for the skeleton $\Upsilon_{C^{\mathrm{an}}}$ $(\Upsilon_{C'^{\mathrm{an}}})$ to be the collection of all paths contained in 
$\Upsilon_{C^{\mathrm{an}}}$ (resp. $\Upsilon_{C'^{\mathrm{an}}}$) connecting any two vertices. 
Since $\Upsilon_{C^{\mathrm{an}}}$ (resp. $\Upsilon_{C'^{\mathrm{an}}}$) is the skeleton
 associated to a weak semistable vertex set, the edges of the skeleton 
are identified with real intervals. This defines a length function on the set of edges. 

By definition, $g^{\mathrm{an}}(C) = g(\Upsilon_{C^{\mathrm{an}}})$ and 
$g^{\mathrm{an}}(C') = g(\Upsilon_{C'^{\mathrm{an}}})$
The genus formula [\cite{aminibaker}, 4.5] implies 
that 
\begin{align*}
  g(C) = g(\Upsilon_{C^{\mathrm{an}}}) + \Sigma_{p \in V(\Upsilon_{C^{\mathrm{an}}})} g_p 
\end{align*}
     and
\begin{align*}
  g(C') =  g(\Upsilon_{C'^{\mathrm{an}}}) + \Sigma_{p' \in V(\Upsilon_{C'^{\mathrm{an}}})} g_{p'}. 
\end{align*}
       
    By definition, the spaces $C'^{\mathrm{an}} \smallsetminus \mathfrak{A}$ and $C'^{\mathrm{an}} \smallsetminus \mathfrak{A'}$ 
  decompose into the disjoint union of Berkovich open balls and open annuli. It follows that if $p \notin \mathfrak{A}$ or $p' \notin \mathfrak{A'}$ then 
  $g_p = 0$ and $g_{p'} = 0$.  
 As $\mathfrak{A} \subset V(\Upsilon_{C^{\mathrm{an}}})$ and 
$\mathfrak{A'} \subset V(\Upsilon_{C'^{\mathrm{an}}})$, the equations above can be rewritten as 
\begin{align}
  g(C) = g(\Upsilon_{C^{\mathrm{an}}}) + \Sigma_{p \in C^{\mathrm{an}}} g_p 
\end{align}
     and
\begin{align}
  g(C') =  g(\Upsilon_{C'^{\mathrm{an}}}) + \Sigma_{p' \in C'^{\mathrm{an}}} g_{p'}. 
\end{align}
 
   The morphism $\phi : C' \to C$ is a finite separable morphism between smooth, projective curves. 
The Riemann-Hurwitz formula  [\cite{hart}, Corollary IV.2.4]
 enables us to relate the genera of the curves $C'$ and $C$. Precisely, 
\begin{align}
  2g(C') - 2 = \mathrm{deg}(\phi)(2g(C) - 2) + \mathrm{deg}(R)
\end{align}
   where $R$ is a divisor on the curve $C'$ such that if $\phi$ is tamely ramified at $x' \in C'$ then 
$\mathrm{ord}_{x'}(R) = \mathrm{ram}(x',x) -1$. Using the above, we obtain the following equation relating 
$g^{\mathrm{an}}(C')$ and $g^{\mathrm{an}}(C)$. 

\begin{align*}
 2g^{\mathrm{an}}(C') - 2 + 2(\Sigma_{p' \in C'^{\mathrm{an}}} g_{p'} )
= \mathrm{deg}(\phi)(2g^{\mathrm{an}}(C) -2) +  \\ 
   \mathrm{deg}(\phi)(2\Sigma_{p \in C^{\mathrm{an}}} g_p) + \mathrm{deg}(R).
  \end{align*}
  
The only points $p' \in C'^{\mathrm{an}}$ for which $g_{p'} \neq 0$ belong to $\mathfrak{A'}$ and are of type II. 
Let $p'$ be such a point and $p := \phi^{\mathrm{an}}(p')$. 
 Applying the Riemann-Hurwitz formula to the 
extension $\widetilde{I(p',p)} \hookrightarrow \widetilde{\mathcal{H}(p')}$ relates $g_{p'}$ and $g_p$ by the following equation. 
\begin{align*}
  2g_{p'} - 2 =  s(p',p)(2g_p - 2) + \mathrm{deg}(R_{p',p}).
\end{align*}
  
     This equation holds for all points of type II. When $p'$ and $p$ are not of type II, we set $s(p',p) := 1$ and $R_{p',p} := 0$. 
These invariants imply the following equation.      
\begin{align*}
2g^{\mathrm{an}}(C') - 2
= \mathrm{deg}(\phi)(2g^{\mathrm{an}}(C) -2) - \Sigma_{p' \in C'^{\mathrm{an}}} [s(p',p)(2g_p -2) + \mathrm{deg}(R_{p',p}) + 2] + \\
         \mathrm{deg}(\phi)\Sigma_{p \in C^{\mathrm{an}}}(2g_p) + \mathrm{deg}(R). 
\end{align*}
 
    Let $s(p) := \Sigma_{p' \in (\phi^{\mathrm{an}})^{-1}(p)} s(p',p)$, $R^1_{p',p} := \mathrm{deg}(R_{p',p}) - (2s(p',p) - 2)$ 
    and $R^1_p :=  \Sigma_{p' \in (\phi^{\mathrm{an}})^{-1}(p)} R^1_{p',p}$.  
These invariants further simplify the equation above to the following form. 
\begin{align*}
2g^{\mathrm{an}}(C') - 2
= \mathrm{deg}(\phi)(2g^{\mathrm{an}}(C) -2) - \Sigma_{p \in C^{\mathrm{an}}} 2s(p)g_p  - \Sigma_{p \in C^{\mathrm{an}}} R^1_{p} 
           + \mathrm{deg}(R).  
      \end{align*}

\end{proof} 

   The rest of this section is dedicated to studying the invariant $s(p)$
    arising in the equation above. 

\subsubsection{Calculating $i(p')$ and the defect}
  Let $M$ be a non-Archimedean valued field with valuation $v$. Let $|M^*|$ denote the value group and 
$\tilde{M}$ denote the residue field. Let $M'$ be a finite extension of the field $M$ such that the valuation $v$ extends 
uniquely to $M'$. By Ostrowski's lemma, we have the following equality.

 \begin{align*} 
  [M':M] = (|M'^{*}| : |M^*|)[\tilde{M'}:\tilde{M}]c^r.
  \end{align*}

   Here $c$ is the characteristic of the residue field if it is positive and one otherwise. 
The value $d(M',M) := c^r$ is called the \emph{defect} of the extension. 
If $r=0$ then we call the extension $M'/M$ \emph{defectless}.
  
    We now relate this definition to the situation we are dealing with.
 Let $p$ be a point
 of type II belonging to $C^{\mathrm{an}}$ and $p' \in (\phi^{\mathrm{an}})^{-1}(p)$. 
Since the field $k$ is algebraically closed non-Archimedean valued 
and the points $p,p'$ are of type II, the value groups of the fields $\mathcal{H}(p)$ 
and $\mathcal{H}(p')$ remain the same. We have the following equality
\begin{align*}
  [\mathcal{H}(p') : \mathcal{H}(p)] = [\widetilde{\mathcal{H}(p')} : \widetilde{\mathcal{H}(p)}]d(p',p)
\end{align*}
 where $d(p',p)$ is the defect of the extension ${\mathcal{H}(p')} / {\mathcal{H}(p)}$. 

\begin{lem}
  Let $p \in C^{\mathrm{an}}$ and $p' \in (\phi^\mathrm{an})^{-1}(p)$. 
The extension $\mathcal{H}(p) \hookrightarrow \mathcal{H}(p')$ is defectless i.e.
$d(p',p) = 1$. 
\end{lem}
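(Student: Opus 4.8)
The plan is to deduce the triviality of the defect from a global degree count over the entire fibre $(\phi^{\mathrm{an}})^{-1}(p)$, combined with the fundamental inequality $d(p',p) \geq 1$. Throughout I take $p$ of type II (the substantive case for which the Ostrowski identity preceding the lemma was set up; for type I the field $\mathcal{H}(p) = k$ and the statement is immediate), write $n = [k(C'):k(C)]$, and set $f_{p'} := [\widetilde{\mathcal{H}(p')} : \widetilde{\mathcal{H}(p)}]$ for each $p' \in (\phi^{\mathrm{an}})^{-1}(p)$. Because $k$ is algebraically closed the value group $|k^*|$ is divisible, so $p$ and all of its preimages share the same value group, and the displayed identity just before the lemma reads
\begin{align*}
 [\mathcal{H}(p') : \mathcal{H}(p)] = f_{p'} \cdot d(p',p).
\end{align*}

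First I would establish the local--global degree formula
\begin{align*}
 \sum_{p' \in (\phi^{\mathrm{an}})^{-1}(p)} [\mathcal{H}(p') : \mathcal{H}(p)] = n.
\end{align*}
This is the standard behaviour of a finite separable extension under completion: as $p$ is of type II it corresponds to a multiplicative norm on $k(C)$, and, as recalled in Section 3, its preimages are exactly the extensions of this norm to $k(C')$. Since $k(C')/k(C)$ is separable, $k(C') \otimes_{k(C)} \mathcal{H}(p)$ is the product $\prod_{p'} \mathcal{H}(p')$ of the corresponding completions, and comparing $\mathcal{H}(p)$-dimensions gives the displayed equality. Substituting the Ostrowski identity yields $\sum_{p'} f_{p'}\, d(p',p) = n$. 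As each $d(p',p) \geq 1$, the lemma becomes equivalent to the purely residual statement
\begin{align*}
 \sum_{p' \in (\phi^{\mathrm{an}})^{-1}(p)} f_{p'} = n,
\end{align*}
and establishing this residual identity is the heart of the matter.

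To prove it I would invoke the stability of the base field. The field $k$ is algebraically closed and complete, hence stable in the sense that $[L:k] = e \cdot f$ for every finite extension $L/k$; by the propagation of stability along analytic points (the completed residue field of a point of a good analytic space over a stable field is again stable), the field $\mathcal{H}(p)$ is itself stable. Consequently every finite extension of $\mathcal{H}(p)$, and in particular each $\mathcal{H}(p')$, is defectless, so $d(p',p) = 1$. Feeding this back into $\sum_{p'} f_{p'}\, d(p',p) = n$ simultaneously recovers the residual identity and proves the lemma. I regard this as the essential input: the conclusion is false for general finite extensions of valued fields, and it is precisely the algebraic closedness of $k$ — equivalently, that $\tilde{k}$ is algebraically closed and $|k^*|$ is divisible — that forces the defect to vanish.

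I expect the main obstacle to be exactly this last step. The sum $\sum_{p'} f_{p'}$ is in general strictly smaller than $n$, the discrepancy being measured by the defects, so no formal manipulation of degrees can close the gap; the hypothesis on $k$ must be used in an essential way, and one should note that $\mathcal{H}(p)$ has residue field a function field rather than an algebraically closed field, which is why stability must be \emph{propagated} from $k$ rather than read off from $\widetilde{\mathcal{H}(p)}$ directly. If one prefers a self-contained argument to citing a stability theorem, I would instead localise the computation on the annulus or disk of the semi-stable decomposition containing the germ at $p$: a finite morphism between Berkovich annuli over an algebraically closed field is of monomial type along the skeleton, and a direct computation of the residue field extension it induces at $p$ shows that the residue degrees $f_{p'}$ account for the full degree $n$ with no defect. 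Either route isolates the same phenomenon, namely that over an algebraically closed non-archimedean field the reduction of a finite morphism of curves loses no degree.
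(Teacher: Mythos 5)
Your argument is correct, but it reaches the conclusion by a genuinely different route than the paper. Both proofs share the same skeleton: the Ostrowski identity $[\mathcal{H}(p'):\mathcal{H}(p)] = [\widetilde{\mathcal{H}(p')}:\widetilde{\mathcal{H}(p)}]\cdot d(p',p)$ (with trivial ramification index because $|k^{*}|$ is divisible), the local--global degree formula $\sum_{p'}[\mathcal{H}(p'):\mathcal{H}(p)] = \deg\phi$ coming from $k(C')\otimes_{k(C)}\mathcal{H}(p)\cong\prod_{p'}\mathcal{H}(p')$, and the observation that $d(p',p)\geq 1$ reduces everything to the residual identity $\sum_{p'}[\widetilde{\mathcal{H}(p')}:\widetilde{\mathcal{H}(p)}]=\deg\phi$. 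Where you diverge is in how that identity is obtained: you import it (indeed, you import $d(p',p)=1$ outright) from the stability theorem for $\mathcal{H}(p)$, i.e.\ the fact that stability propagates from the algebraically closed base $k$ to completed residue fields of points of curves. The paper instead proves the residual identity by hand: it chooses an unramified $k$-point $x$ retracting to $p$ and a uniformisant $t_x$ with $|t_x(p)|=1$ whose reduction is a uniformisant at the germ $e_x\in\tilde{C}_p$ and which has no other zeros or poles on the relevant residue disk; the non-archimedean Poincar\'e--Lelong theorem then identifies $\sum_{e'\in E(e_x,p')}\mathrm{ord}_{e'}(\tilde{t}'_x)=[\widetilde{\mathcal{H}(p')}:\widetilde{\mathcal{H}(p)}]$ with the number of preimages of $x$ retracting to $p'$, and summing over $p'$ recovers $\deg\phi$ since $\phi$ is unramified over $x$. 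Your route is shorter but leans on a substantial external theorem; if you cite it, make sure the version you invoke actually covers the case needed here, namely type II points of curves (equivalently Shilov boundary points of affinoid neighbourhoods) --- the blanket statement for arbitrary points of arbitrary good analytic spaces is stronger than what the standard references give. The paper's route is self-contained modulo Poincar\'e--Lelong and, as a by-product, yields the explicit identity $\sum_{p'}[\widetilde{\mathcal{H}(p')}:\widetilde{\mathcal{H}(p)}]=\deg\phi$ together with its refinement germ by germ, which are reused in Propositions 4.2, 5.3 and 5.5. One presentational remark: in your write-up the reduction to the residual identity is logically idle, since stability gives $d(p',p)=1$ directly and the ``feeding back'' step is only a consistency check.
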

\begin{proof}
 We make use of the Poincaré-Lelong theorem and our construction in Section 3
of the pair of compatible deformation retractions $\psi$ and $\psi'$.  
Let $r \in [0,1]$ be the smallest real number such that
$p \in \psi(r,C(k)) = \{\psi(r,x) | x \in C(k)\}$. Since the deformation retractions are compatible it follows that 
if $p' \in (\phi^{\mathrm{an}})^{-1}(p)$ then $p' \in \psi'(r,C'(k))$. 

  Let $x \in C(k)$ be such that $\psi(r,x) = p$. Observe that our choice of $\Upsilon_{C^{\mathrm{an}}}$ implies 
  that the morphism $\phi$ is unramified over $x$. Let $P_x$ denote the path $\psi(\_,x) : [0,r] \to C^{\mathrm{an}}$. 
Given a simple neighborhood [\cite{BPR}, Definition 4.28], $U$ of $p$, the germ of the path $P_x$ at $p$ lies 
in a connected component of $U \smallsetminus \{p\}$ and hence defines an element of the tangent space which we refer to as $e_x$. 
Equivalently, for some $a > 0$, the path $(P_x)_{|[a,r]} \circ -\mathrm{exp} : [-\mathrm{log}(a), -\mathrm{log}(r)] \to C^{\mathrm{an}}$
 is a geodesic segment and its germ defines the element $e_x$ of the tangent space $T_p$ (cf. Remark 2.22). 
Let $t_x$ be a uniformisant of 
the local ring $O_{C,x}$ such that $|t_x(p)| = 1$ and $\tilde{t}_x$- the image of $t_x$
in the residue field $\widetilde{\mathcal{H}(p)}$ is a uniformisant at the point 
$e_x$ in $\widetilde{\mathcal{H}(p)}$. This can be accomplished by choosing $t_x$ so that it has no zeros or poles at any 
$k$-point $y$ for which the path $\psi(\_,y) : [0,r] \to C^{\mathrm{an}}$ coincides with $e_x$
in the tangent space and using the Poincar\'e-Lelong theorem. Such a choice is possible by 
the semistable decomposition associated to 
the skeleton $\Upsilon_{C^{\mathrm{an}}}$. Let $t'_x$ denote the image of $t_x$ in the function field $k(C')$. 
Our choice of $t_x$ implies that for every $p' \in (\phi^{\mathrm{an}})^{-1}(p)$, $|t'_x(p')| = 1$.   
  
  The inclusion $\mathcal{H}(p) \hookrightarrow \mathcal{H}(p')$ induces an inclusion 
of $\tilde{k}$-function fields $\widetilde{\mathcal{H}(p)} \hookrightarrow \widetilde{\mathcal{H}(p')}$. 
As before, let $\tilde{C}_p$ and $\tilde{C}'_{p'}$ denote the smooth projective curves associated to these function fields. 
As explained above, the path $P_x$ defines a $\tilde{k}$-point $e_x$ of the curve $\tilde{C}_p$. 
Let $E(e_x,p')$ denote the set of preimages of this point on the curve $\tilde{C}'_{p'}$. 

  Let $S:= \{x'_1,...x'_k\}$ denote the preimages of the point $x$ and 
$\mathrm{ram}(x'_i,x)$ denote the ramification index of the morphism $\phi$ at the point $x'_i$.
Since the skeleton $\Upsilon_{C^{\mathrm{an}}}$ contains the set of $k$-points over which 
the morphism is ramified, we have that $\mathrm{ram}(x'_i,x) = 1$ for all $i$. 
If $x' \in S$ then the path $P_{x'} := \psi'(\_,x') : [0,r] \to C^{\mathrm{an}}$ defines an element of the tangent space at $\psi'(r,x')$. 
Indeed, if $U'$ is a simple neighborhood of the point $\psi'(r,x')$ then there exists $a \in [0,r)$ such that 
${P_{x'}}_{|[a,r)}$ is contained in exactly one connected component of the space $U' \smallsetminus \psi'(r,x')$. 

The set of elements of the tangent spaces $T_{p'}$ for $p' \in (\phi^{\mathrm{an}})^{-1}(p)$
that are defined by the paths
 $\{\psi'(\_,x'_i) : [0,r] \to C^{\mathrm{an}}\}$ coincides with the set  
$E(e_x) := \cup_{p' \in (\phi^{\mathrm{an}})^{-1}(p)} E(e_x,p')$. 
Our choice of $t_x$ implies that if $y' \in C'(k) \smallsetminus  \phi^{-1}(x)$ and 
$\psi'(\_,y') : [0,r] \to C'^{\mathrm{an}} \in E(e_x)$ then  
$t'_x$ cannot have a zero or pole at $y'$.  

  For $p' \in (\phi^{\mathrm{an}})^{-1}(p)$ and $e' \in E(e_x,p')$, let $S_{e',p'}$ be the collection of those  
$x' \in S$ such that $\psi'(r,x') = p'$ and $\psi'(\_,x') : [0,r] \to C^{\mathrm{an}} = e'$. 
The non-Archimedean Poincaré-Lelong theorem implies that 
\begin{align*}
\delta_{e'}(-|\mathrm{log}(t'_x)|)(p') = \Sigma_{x' \in S_{e',p'}} \mathrm{ram}(x',x) = \mathrm{card}(S_{e',p'}).
\end{align*}
   The second equality follows from the fact that $\mathrm{ram}(x',x) = 1$. Furthermore,
\begin{align*}
 \delta_{e'}(-|\mathrm{log}(t'_x)|)(p') = \mathrm{ord}_{e'}(\tilde{t}'_x). 
\end{align*}
   Since $\Sigma_{e' \in E(e_x,p')} \mathrm{ord}_{e'}(\tilde{t'}_x) = [\widetilde{\mathcal{H}(p')} : \widetilde{\mathcal{H}(p)}]$, we must have that 
   \begin{align*}
 \Sigma_{p' \in (\phi^{\mathrm{an}})^{-1}(p)} [\widetilde{\mathcal{H}(p')} : \widetilde{\mathcal{H}(p)}] = \Sigma_{x' \in S} \mathrm{ram}(x',x)
= \Sigma_{e',p'} \mathrm{card}(S_{e',p'}).
\end{align*}
  Hence 
\begin{align*}
  \Sigma_{p' \in (\phi^{\mathrm{an}})^{-1}(p)} [\widetilde{\mathcal{H}(p')} : \widetilde{\mathcal{H}(p)}] = \mathrm{card}(S). 
\end{align*}

   As the field $k$ is algebraically closed, the expression on the right is equal to the degree of the morphism $\phi$ and 
 we have that 
\begin{align*}
  \Sigma_{p' \in (\phi^{\mathrm{an}})^{-1}(p)} 
[\widetilde{\mathcal{H}(p')} : \widetilde{\mathcal{H}(p)}] = \Sigma_{p' \in (\phi^{\mathrm{an}})^{-1}(p)} [\mathcal{H}(p') : \mathcal{H}(p)].  
\end{align*}
 This implies that for every $p' \in (\phi^{\mathrm{an}})^{-1}(p)$ the extension 
$\mathcal{H}(p) \hookrightarrow \mathcal{H}(p')$ is defectless.      
\end{proof}

    The result above follows from the more general fact that the residue field $\mathcal{H}(p)$ associated to a point $p$ of type II on the analytification $C^{\mathrm{an}}$ of 
     a $k$-curve 
    $C$ is stable [\cite{TEM}, Corollary 6.3.6], \cite{DUC}. 
    Lemma 4.2 can in fact be used to prove this result. 
        Propositions 2 and 4 of Section 3.6 in \cite{BGR} allow us to give the following definition of a stable field which is complete. 
 \begin{defi} 
  \emph{A complete field $K$ is} stable \emph{if and only if for every finite separable field extension $L/K$ 
the following equality holds 
\begin{align*}
    [L:K] = [|L^*| : |K^*|][\widetilde{L} : \widetilde{K}]. 
\end{align*}} 
 \end{defi}    
      
 \begin{prop} 
 Let $S$ be a $k$-curve. Let $p \in S^{\mathrm{an}}$ be a point of type II. The complete 
 field $\mathcal{H}(p)$ is stable. 
 \end{prop}
\begin{proof} 
Let $S_i$ be an irreducible component of $S$ such that $p \in S_i^{\mathrm{an}}$. 
     Let $S'_i$ denote the normalisation of $S_i$. There exists a finite set of $k$-points $F'$ and $F$ in 
     $S'_i$ and $S_i$ respectively such that ${S'}_i^{\mathrm{an}} \smallsetminus F' = S_i^{\mathrm{an}} \smallsetminus F$. 
     It follows that we may reduce to the case when $S$ is smooth, projective and integral.  
     
      Let $L$ be a finite separable extension of $\mathcal{H}(p)$. By definition, $\mathcal{H}(p)$ is the completion of the function 
      field of the curve $S$ with respect to the valuation associated to $p$. Let $L_0$ denote the integral closure of 
      $k(S)$ in $L$. By construction, $L_0$ is a finite separable field extension of $k(S)$. Hence there exists a smooth projective $k$-curve $S'$ such that 
      $k(S') = L_0$. 
      Let $\widehat{L_0}$ denote the completion of $L_0$ induced by the restriction of the valuation 
      of $L$. We claim that $\widehat{L_0} = L$. Let $\alpha \in L$. By construction $\alpha$ is algebraic over $\widehat{L_0}$. 
     Let $g \in \widehat{L_0}[X]$ be the minimal polynomial of $\alpha$. Suppose that $g$ is not a monomial.  
     As $\widehat{L_0}$ is the completion of 
     $L_0$,
      there exists a sequence $(f_i)_i$ of polynomials of $\mathrm{deg}(g)$ in $L_0[X]$ which converge to $g$ with respect to the Gauss norm i.e.
      the coefficients of the $(f_i)_i$ converge to the coefficients of $g$. 
      Let $\alpha_i$ denote a root of $f_i$ for each $i$. 
      By Corollary 2 of [\cite{BGR}, Section 3.4], there exists a sub sequence of $(\alpha_i)_i$ which converges to a root $\alpha'$ of $g$. 
      As the $\alpha_i$ are algebraic over $L_0$ they must be algebraic over $k(S)$. 
      Furthemore, by Proposition 3 in [\cite{BGR}, Section 3.4], for large enough $i$ we must have that $\alpha_i \in L$. 
       Hence by definition of $L_0$,
      $\alpha_i \in L_0$. Hence $\alpha' \in \widehat{L_0}$. This implies a contradiction to our assumption that $g$ is irreducible 
      and of degree greater than or 
      equal to $2$. 
      Consequently, $\alpha \in \widehat{L_0}$ and $\widehat{L_0} = L$. 
       Hence there exists a point $p'$ of type II on $S'^{\mathrm{an}}$ such that 
       $\mathcal{H}(p') = L$. The proposition follows from Lemma 4.2.   
       
\end{proof}

We study the invariants $s(p)$ and $s(p',p)$ of Theorem 4.1 using the deformation retractions $\psi$ and $\psi'$. 

\begin{defi} (The equivalence relation $\sim_{i(r)}$)
\emph{Let $r \in [0,1]$. We define an equivalence relation $\sim_{i(r)}$ on 
$C'(k)$ as follows. We set $x'_1 \sim_{i(r)} x'_2$ 
if and only if 
$\phi(x'_1) = \phi(x'_2)$, $\psi'(r,x'_1) = \psi'(r,x'_2)$ and 
the elements of the tangent space $T_{\psi'(r,x'_1)} = T_{\psi'(r,x'_2)}$  
defined by the 
 paths $\psi'(\_,x'_1) : [0,r] \to C'^{\mathrm{an}}$, $\psi'(\_,x'_2) : [0,r] \to C'^{\mathrm{an}}$ coincide. 
 For $x' \in C'(k)$, let $\mathrm{card} [x']_{i(r)}$ be the cardinality of the equivalence class 
which contains $x'$. }
\end{defi}

\begin{defi} (The real number $r_p$, the set $Q_{p',p}$ and the invariant $i(p',p)$)
\emph{Let $p \in C^{\mathrm{an}}$ be a point which is not
 of type IV and $p' \in (\phi^{\mathrm{an}})^{-1}(p)$. 
\begin{enumerate}
\item  We define $r_p \in [0,1]$ to be the smallest real number for which 
$p \in \psi(r_p,C(k))$ where $\psi(r_p,C(k)) := \{\psi(r_p,x) | x \in C(k)\}$. 
\item We 
define
 $Q_{p',p} := \{x' \in C'(k)|\psi'(r_p,x') = p'\}$. 
\item Let $i(p',p) := \mathrm{min}_{x' \in Q_{p',p}} \{\mathrm{card} [x']_{i(r_p)}\}$. 
\end{enumerate}  
}
 \end{defi} 
  
 Recall that if $p$ is a point of type II then we used $s(p',p)$ to denote the separable degree of 
the field extension $\widetilde{\mathcal{H}(p)} \hookrightarrow \widetilde{\mathcal{H}(p')}$ and we set 
$s(p',p) = 1$ otherwise. 

\begin{prop} 
   Let $p' \in C'^{\mathrm{an}}, p := \phi^{\mathrm{an}}(p')$.
  \begin{enumerate}
  \item  When $p$ is of type II,   
  the number $i(p',p)$ (cf. Definition 4.6) is the degree of 
inseparability of the extension $\widetilde{\mathcal{H}(p)} \hookrightarrow \widetilde{\mathcal{H}(p')}$. Hence 
$s(p',p) = [\mathcal{H}(p'):\mathcal{H}(p)]/i(p',p)$. 
  \item  When $p$ is not of type II or IV, $s(p) := \Sigma_{p' \in (\phi^{\mathrm{an}})^{-1}(p)} s(p',p)$ is the number of $\sim_{i(r(p))}$
equivalence classes in $\phi^{-1}(x)$ for any $x \in C(k)$ such that $\psi(r_p,x) = p$. 
  \end{enumerate}   
 \end{prop}
 \begin{proof}  The second assertion can be verified directly and we restrict to proving the proposition for points of type II. 
   Let $\tilde{C}_p$ and $\tilde{C}'_{p'}$ denote the smooth projective curves corresponding to 
the function fields $\widetilde{\mathcal{H}(p)}$ and $\widetilde{\mathcal{H}(p')}$ respectively.   
For a point $e \in \tilde{C}_p$, let $s_e$ denote the uniformisant of the local ring $O_{\tilde{C}_p,e}$. 

   Let $x \in C(k)$ be such that $\psi(r_p,x) = p$. Since $\Upsilon_{C^{\mathrm{an}}}$ contains 
every $k$-point over which the morphism $\phi$ is ramified, $\phi$ is unramified over $x$. 
Let $e_x \in \tilde{C}_p$ correspond to the path 
$\psi(\_,x) : [0,r_p] \to C^{\mathrm{an}}$. 
Let $t_x$ be a uniformisant 
of $x$ such that $|t_x(p)| = 1$
and it does not have any zeros or poles at any $y$ for which the element of the tangent space $T_p$ defined by 
$\psi(\_,y) : [0,r_p] \to C^{\mathrm{an}}$ coincides with $e_x$.

 It follows that the image of $t_x$ in the field $\widetilde{{\mathcal{H}(p)}}$
is a uniformisant at the point $e_x$. We can hence assume $\tilde{t}_x = s_e$. 
 Let $e' \in \tilde{C}'_{p'}$ map to $e_x$ and 
$y' \in C'(k)$ be such that the element of $T_{p'}$ defined by the path $\psi'(\_,y') : [0,r_p] \to C'^{\mathrm{an}}$ coincides with $e'$. 

   By the Non-Archimedean Poincaré-Lelong Theorem,
the order of vanishing of the uniformisant $\tilde{t}_x$ at $e'$ is equal
to the cardinality of the equivalence class $[y']_{i(r_p)}$. 
The inseparable degree of $\widetilde{\mathcal{H}(p')}/\widetilde{\mathcal{H}(p)}$
is equal to 
$\mathrm{min}_{\{(e',e)|e' \in \tilde{C}'_{p'}, e' \mapsto e\}}  \{\mathrm{ord}_{e'}(s_e)\}$ i.e. 
$\mathrm{min}_{\{(e',e)|e' \in \tilde{C}'_{p'}, e' \mapsto e\}}  \{\mathrm{ord}_{e'}(\tilde{t}_x)\}$.
Hence $i(p',p) = \mathrm{min}_{x' \in Q_{p',p}} \{\mathrm{card} [x']_{i(r(p))}\}$ is the 
degree of inseparability of the extension $\widetilde{\mathcal{H}(p')}/\widetilde{\mathcal{H}(p)}$. 
The equality
$s(p',p) = [\mathcal{H}(p'):\mathcal{H}(p)]/i(p',p)$ follows from Lemma 4.2.  
 \end{proof}
 
 \begin{defi} \emph{
  Let $p \in C^{\mathrm{an}}$. 
  We define $i(p) := \Sigma_{p' \in  (\phi^{\mathrm{an}})^{-1}(p)} [\mathcal{H}(p'):\mathcal{H}(p)]/i(p',p)$ 
  where $i(p',p)$ is as in Definition 4.6.}  
 \end{defi}    
      Proposition 4.7,
     Theorem 4.1 and the fact that $g_p = 0$ when $p$ is not of type II imply the following corollary. 
      
        \begin{cor} 
      Let $\phi : C' \to C$ be a finite separable 
      morphism between smooth projective curves over the field $k$. Let $g^{\mathrm{an}}(C'), g^{\mathrm{an}}(C)$ be as in Definition 2.26.
  We have the following equation. 
  \begin{align*}
     2g^{\mathrm{an}}(C') - 2 = \mathrm{deg}(\phi)(2g^{\mathrm{an}}(C) - 2) + \Sigma_{p \in C^{\mathrm{an}}} 2 i(p) g_p  + \mathrm{deg}R  - \Sigma_{p \in C^{\mathrm{an}}} R^1_p.
                              \end{align*}
     \end{cor}  
 
\section{A second calculation of $g^{\mathrm{an}}(C')$}

       Let $\phi : C' \to C$ be a finite morphism between smooth projective curves over the field $k$. 
       Our results in Section 3 imply the existence of a pair of deformation retractions $\psi'$, $\psi$ on 
       $C'^{\mathrm{an}}$ and $C^{\mathrm{an}}$ which are compatible with the morphism 
       $\phi^{\mathrm{an}}$. We choose $\psi$ and $\psi'$ as in the proof of Theorem 4.1. 
       Let $\Upsilon_{C'^{\mathrm{an}}}$ and $\Upsilon_{C^{\mathrm{an}}}$ denote the images of the retractions $\psi'$ and $\psi$ respectively. 
       The deformation retractions $\psi, \psi'$ can be constructed so that $\Upsilon_{C^{\mathrm{an}}}$ contains 
       those points of $C(k)$ over which the morphism $\phi$ is ramified and does not contain any point of type IV. 
       We have that $g^{\mathrm{an}}(C') = g(\Upsilon_{C'^{\mathrm{an}}})$ and $g^{\mathrm{an}}(C) = g(\Upsilon_{C^{\mathrm{an}}})$. 
        
  \begin{defi}
    \emph{A} divisor \emph{on a finite metric graph is an element of the free abelian group generated by the points of the graph.} 
  \end{defi}  
    
       As outlined in the introduction, in this section we introduce a divisor $w$ on the skeleton
$\Upsilon_{C^{\mathrm{an}}}$ and relate the degree of this divisor to the genus of the skeleton $\Upsilon_{C'^{\mathrm{an}}}$. 
The point of doing so is to study how $g(\Upsilon_{C'^{\mathrm{an}}})$ can be calculated in terms of $g(\Upsilon_{C^{\mathrm{an}}})$ 
and the
behaviour of the morphism 
between the sets of vertices. 
   
   We preserve our choices of vertex sets and edge sets for the two skeleta from the proof of Theorem 4.1. 

 \begin{defi} (The invariant $n_p$ and the sets of tangent directions $E_p$, $L(e_p,p')$.) 
\emph{Let $p \in \Upsilon_{C^{\mathrm{an}}}$.
 \begin{enumerate}
 \item Let $n_p$ denote the number of preimages of $p$ for the morphism $\phi^{\mathrm{an}}$. 
 \item Let $T_p$ denote the 
    tangent space at the point $p$ (cf. 2.2.3, 2.4.1).
     We define $E_{p,\Upsilon_{C^{\mathrm{an}}}} \subset T_p$ to be those elements for which there exists a representative
 starting from $p$ and contained completely in 
$\Upsilon_{C^{\mathrm{an}}}$. When there is no ambiguity concerning the graph $\Upsilon_{C^{\mathrm{an}}}$, we simplify 
notation and write $E_p$. 
 \item For any $p' \in C'^{\mathrm{an}}$ such that 
 $\phi^{\mathrm{an}}(p') = p$, 
the morphism $\phi^{\mathrm{an}}$ induces a map $d\phi_{p'}$ between the tangent spaces $T_{p'}$ and $T_{p}$ (cf. 2.2.3, 2.4.1).
For $p' \in (\phi^{\mathrm{an}})^{-1}(p)$
and $e_p \in E_p$, we define $L(e_p,p') \subset T_{p'}$ to be the set of preimages of $e_p$ for the map $d\phi_{p'}$
and $l(e_p,p')$ to be the cardinality of the set $L(e_p,p')$. 
   \end{enumerate}} 
 \end{defi}

Observe that   
as $\Upsilon_{C'^{\mathrm{an}}} = (\phi^{\mathrm{an}})^{-1}(\Upsilon_{C^{\mathrm{an}}})$, any
element of $L(e_p,p')$ can be represented by a geodesic segment that is contained completely in 
$\Upsilon_{C'^{\mathrm{an}}}$. 

\begin{defi} (The divisor $w$ of $\Upsilon_{C^{\mathrm{an}}}$)
\emph{Let the notation be as in Definition 5.2. For a point $p \in \Upsilon_{C^{\mathrm{an}}}$, 
let $w(p) := (\sum_{e_p \in E_p, p' \in (\phi^{\mathrm{an}})^{-1}(p)} l(e_p,p')) - 2n_p$.
We define $w$ to be the divisor $\Sigma_{p \in \Upsilon_{C^{\mathrm{an}}}} w(p)p$.} 
\end{defi}

\begin{prop}
   The degree of the divisor $w$ is equal to $2g(\Upsilon_{C'^{\mathrm{an}}}) - 2$. 
\end{prop}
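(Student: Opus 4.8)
The plan is to reduce the computation of $\deg(w)$ to the handshake lemma on the graph $\Upsilon_{C'^{\mathrm{an}}}$, after first showing that $w$ is supported on the vertex set $V(\Upsilon_{C^{\mathrm{an}}})$. The central observation is a valence identity. For $p \in \Upsilon_{C^{\mathrm{an}}}$ and $p' \in \phi^{-1}(p)$, write $\mathrm{val}(p')$ for the valence of $p'$ in the finite metric graph $\Upsilon_{C'^{\mathrm{an}}}$, that is, the number of germs of $\Upsilon_{C'^{\mathrm{an}}}$ starting at $p'$ (a loop counted twice). Since $\Upsilon_{C'^{\mathrm{an}}} = (\phi^{\mathrm{an}})^{-1}(\Upsilon_{C^{\mathrm{an}}})$, a germ at $p'$ lies in $\Upsilon_{C'^{\mathrm{an}}}$ precisely when it is a lift of some $e_p \in E_p$; the tangent directions of $\Upsilon_{C'^{\mathrm{an}}}$ at $p'$ are therefore partitioned according to which germ of $E_p$ they cover, and the number covering a given $e_p$ is exactly $l(e_p,p')$. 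Hence
\begin{align*}
 \mathrm{val}(p') = \sum_{e_p \in E_p} l(e_p,p'),
\end{align*}
and summing over $p' \in \phi^{-1}(p)$ rearranges the defining double sum so that $w(p) = \sum_{p' \in \phi^{-1}(p)}\mathrm{val}(p') - 2n_p$.

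Next I would verify that $w(p)=0$ whenever $p$ lies in the interior of an edge of $\Upsilon_{C^{\mathrm{an}}}$, which exhibits $w$ as a genuine divisor supported on the finitely many vertices. For such a $p$ we have $\sharp E_p = 2$. As $\phi^{-1}(V(\Upsilon_{C^{\mathrm{an}}})) = V(\Upsilon_{C'^{\mathrm{an}}})$, no preimage $p'$ is a vertex, so property $(3)$ forces $\mathrm{val}(p') \leq 2$; conversely the tangent map $d\phi_{p'}$ is surjective (as invoked in the proof of Proposition 3.3), so each of the two germs of $E_p$ lifts to at least one germ at $p'$ and $\mathrm{val}(p') \geq 2$. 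Thus $\mathrm{val}(p') = 2$ for every $p' \in \phi^{-1}(p)$, whence $w(p) = 2n_p - 2n_p = 0$.

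It remains to sum over the vertices. At any vertex $p \in V(\Upsilon_{C^{\mathrm{an}}})$, using $n_p = \sharp\,\phi^{-1}(p)$, the expression above reads $w(p) = \sum_{p' \in \phi^{-1}(p)}(\mathrm{val}(p') - 2)$, an identity independent of the type of $p$. Summing over all vertices of $\Upsilon_{C^{\mathrm{an}}}$ and using once more that the fibres $\phi^{-1}(p)$ partition $V(\Upsilon_{C'^{\mathrm{an}}})$ gives
\begin{align*}
 \deg(w) = \sum_{p' \in V(\Upsilon_{C'^{\mathrm{an}}})} (\mathrm{val}(p') - 2) = \Big(\sum_{p'} \mathrm{val}(p')\Big) - 2\,\sharp V(\Upsilon_{C'^{\mathrm{an}}}).
\end{align*}
The handshake lemma $\sum_{p'} \mathrm{val}(p') = 2\,\sharp E(\Upsilon_{C'^{\mathrm{an}}})$ together with the genus formula $g(\Upsilon_{C'^{\mathrm{an}}}) = 1 - \sharp V(\Upsilon_{C'^{\mathrm{an}}}) + \sharp E(\Upsilon_{C'^{\mathrm{an}}})$ then gives $\deg(w) = 2(\sharp E(\Upsilon_{C'^{\mathrm{an}}}) - \sharp V(\Upsilon_{C'^{\mathrm{an}}})) = 2g(\Upsilon_{C'^{\mathrm{an}}}) - 2$. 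I expect the finite-support step to be the only delicate point, since it is where the set-theoretic equality $\Upsilon_{C'^{\mathrm{an}}} = (\phi^{\mathrm{an}})^{-1}(\Upsilon_{C^{\mathrm{an}}})$, the surjectivity of $d\phi_{p'}$, and property $(3)$ must be combined; everything after is elementary graph-theoretic bookkeeping.
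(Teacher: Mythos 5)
Your argument is correct and is essentially the paper's own proof: the identity $\sum_{p'\in\phi^{-1}(p)}\mathrm{val}(p')=\sum_{e_p\in E_p,\,p'\in\phi^{-1}(p)}l(e_p,p')$ is exactly the paper's identification of $w$ with the pushforward of the valence divisor $\sum_{p'}(t_{p'}-2)p'$ on $\Upsilon_{C'^{\mathrm{an}}}$, whose degree is $2g(\Upsilon_{C'^{\mathrm{an}}})-2$. You merely make explicit two points the paper leaves as cited facts, namely the handshake-lemma computation of that degree and the vanishing of $w$ at non-vertex points.
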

\begin{proof}
    We begin by stating the following fact concerning connected, finite metric graphs. Let $\Sigma$ be
 a connected, finite metric graph. Let $p \in \Sigma$. Let $U$ be a simply connected neighborhood 
 of $p$ in $\Sigma$. We define $t_p$ to be the cardinality of the set of connected components of the space 
 $U \smallsetminus \{p\}$ and 
$D_\Sigma := \sum_{p \in \Sigma} (t_p - 2)p$. It can be verified that
 $D_\Sigma$ is a divisor on the finite graph $\Sigma$ whose degree is equal to 
$2g(\Sigma) - 2$. 
 
  The connected, finite graphs $\Upsilon_{C'^{\mathrm{an}}}$ and $\Upsilon_{C^{\mathrm{an}}}$ are the images of 
a pair of compatible deformation retractions. Hence the morphism $\phi^{\mathrm{an}}$ restricts 
to a continuous map $\Upsilon_{C'^{\mathrm{an}}} \to \Upsilon_{C^{\mathrm{an}}}$. This map induces a homomorphism 
$\phi_* : \mathrm{Div}(\Upsilon_{C'^{\mathrm{an}}}) \to \mathrm{\mathrm{Div}}(\Upsilon_{C^{\mathrm{an}}})$ defined as follows. We define $\phi_*$ only
on the generators of the group $\mathrm{Div}(\Upsilon_{C'^{\mathrm{an}}})$. If $1.p' \in \mathrm{Div}(\Upsilon_{C'^{\mathrm{an}}})$ then 
we set $\phi_*(1.p') = 1.\phi(p')$. Note that for any divisor $D' \in \mathrm{Div}(\Upsilon_{C'^{\mathrm{an}}})$,
$\mathrm{deg}(\phi_*(D')) = \mathrm{deg}(D')$.      

    We will show that $w = \phi_*(D_{\Upsilon_{C'^{\mathrm{an}}}})$. By definition, 
\begin{align*}
 \phi_*(D_{\Upsilon_{C'^{\mathrm{an}}}})(p) = (\sum_{p' \in (\phi^{\mathrm{an}})^{-1}(p)} t_{p'}) - 2n_p .
\end{align*}

  Let $p' \in \Upsilon_{C'^{\mathrm{an}}}$ and $p = \phi^{\mathrm{an}}(p')$.  
 We must have that the number of distinct germs of geodesic segments starting from 
 $p'$ and contained in $\Upsilon_{C'^{\mathrm{an}}}$ is $t_{p'}$. 
  We have a map $d\phi_{p'} : T_{p'} \to T_p$ which maps 
 germs of geodesic segments starting at $p'$ to germs of geodesic segments starting at $p$. 
 As $\Upsilon_{C'^{\mathrm{an}}} = (\phi^{\mathrm{an}})^{-1}(\Upsilon_{C^{\mathrm{an}}})$, we must have that 
the image via $d\phi_{p'}$ of a germ 
for which there exists a representative contained in $\Upsilon_{C'^{\mathrm{an}}}$
and starting from $p'$ must be a germ starting at $p$ for which there exists a representative 
contained in $\Upsilon_{C^{\mathrm{an}}}$. Likewise, if $e_p$ is a germ starting at $p$
which has a representative contained in $\Upsilon_{C^{\mathrm{an}}}$
then its preimage for the map $d\phi_{p'}$ is a germ starting at $p'$ for which there exists
a representative contained in $\Upsilon_{C'^{\mathrm{an}}}$.
It follows that 
$\sum_{p' \in (\phi^{\mathrm{an}})^{-1}(p)}t_{p'} =  \sum_{e_p \in E_p, p' \in \phi^{-1}(p)} l(e_p,p')$. 
Hence $\phi_*(D_{\Upsilon_{C^{\mathrm{an}}}}) = w$.  
\end{proof}

\subsection{Calculating $n_p$}

We extend the invariant $n_p$ of Definition 5.2 to all points of $C^{\mathrm{an}}$. 
\begin{defi}(The invariant $n_p$)
\emph{Let $p \in C^{\mathrm{an}}$. 
 Let $n_p$ denote the number of preimages of $p$ for the morphism $\phi^{\mathrm{an}}$.}
\end{defi} 

  In this section we study $n_p$ for $p \in C^{\mathrm{an}}$ with the added restriction 
  that the extension of function fields $k(C) \hookrightarrow k(C')$
associated to the morphism $\phi$ is Galois.

\begin{defi}(The invariant $\mathrm{ram}(p)$ for $p \in C^{\mathrm{an}}$)
\emph{ 
Let $p \in C^{\mathrm{an}}$.
\begin{enumerate}
\item Let $p$ be a point of type I i.e. $p \in C(k)$. 
 Let $p' \in C'(k)$ such that $\phi(p') = p$. Let 
$\mathrm{ram}(p',p)$ denote the ramification degree associated to the extension of the discrete 
valuation rings $O_{C,p} \hookrightarrow O_{C',p'}$. 
Since the morphism $\phi$ is Galois, 
for $p \in C(k)$, the ramification degree $\mathrm{ram}(p',p)$ is a constant as $p'$ varies along the set of preimages 
of the point $p$. The ramification degree depends only on the point $p \in C(k)$ and we denote it $\mathrm{ram}(p)$. 
As $k$ is algebraically closed we have that
\begin{align*}
[k(C') : k(C)] =  n_p \mathrm{ram}(p).
 \end{align*}
 \item When $p$ is not of type I, we define $\mathrm{ram}(p) := 1$.  
\end{enumerate}}
\end{defi}

    Let $p$ be a point of $C^{\mathrm{an}}$ which is not of type IV. Recall that $r_p$ is the smallest real number in 
the real interval $[0,1]$ such that $p$ belongs to $\psi(r_p,C(k)) = \{\psi(r_p,x) | x \in C(k)\}$. Since the pair of deformation retractions 
$\psi'$ and $\psi$ are compatible with the morphism $\phi^{\mathrm{an}}$,
 we must have that $(\phi^{\mathrm{an}})^{-1}(p) \subset \psi'(r_p,C'(k))$.    

\begin{defi}(The equivalence relation $\sim_{c(r)}$ on $C(k)$) 
  \emph{ For $r \in [0,1]$, we define an equivalence relation $\sim_{c(r)}$ on the set of $k$-points of the curve $C'$. Let 
$x'_1,x'_2 \in C'(k)$. We set $x'_1 \sim_{c(r)} x'_2$ if $\phi(x'_1) = \phi(x'_2)$ and   
$\psi'(r,x'_1) = \psi'(r,x'_2)$. Observe that each equivalence class is finite.   
For $x' \in C'(k)$, let $[x']_{c(r)}$ denote that equivalence class containing the point $x'$.} 
\end{defi} 

\begin{lem}
  If $x'_1, x'_2 \in C'(k)$ such that $\phi(x'_1) = \phi(x'_2)$ then 
\begin{align*} 
 \mathrm{card} [x'_1]_{c(r)} =  \mathrm{card} [x'_2]_{c(r)} 
\end{align*}  
    for all $r \in [0,1]$. 
\end{lem}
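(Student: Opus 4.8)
The plan is to exploit the Galois-equivariance of $\psi'$ established in Proposition 3.6 together with the transitivity of the Galois action on fibres. Set $x := \phi(x'_1) = \phi(x'_2)$. First I would record that for each $\sigma \in G$ the induced homeomorphism of $C'^{\mathrm{an}}$ satisfies $\phi^{\mathrm{an}} \circ \sigma = \phi^{\mathrm{an}}$, since $\sigma$ fixes $k(C)$ pointwise; in particular $\sigma$ permutes each fibre of $\phi^{\mathrm{an}}$. Because the extension $k(C) \hookrightarrow k(C')$ is Galois and $k$ is algebraically closed, $G$ acts transitively on the fibre $\phi^{-1}(x) \subset C'(k)$. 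Hence we may choose $\sigma \in G$ with $\sigma(x'_1) = x'_2$.

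Next I would show that this $\sigma$ carries the class $[x'_1]_{r(s_p)}$ into the class $[x'_2]_{r(s_p)}$. Let $y' \in [x'_1]_{r(s_p)}$, so that $\phi(y') = x$ and $\psi'(s_p,y') = \psi'(s_p,x'_1)$. Applying $\phi \circ \sigma = \phi$ gives $\phi(\sigma(y')) = x$, and Proposition 3.6 yields
\begin{align*}
  \psi'(s_p,\sigma(y')) &= \sigma(\psi'(s_p,y')) = \sigma(\psi'(s_p,x'_1)) \\
                        &= \psi'(s_p,\sigma(x'_1)) = \psi'(s_p,x'_2).
\end{align*}
Therefore $\sigma(y') \in [x'_2]_{r(s_p)}$, so $\sigma([x'_1]_{r(s_p)}) \subseteq [x'_2]_{r(s_p)}$. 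Running the same argument with $\sigma^{-1} \in G$ (which sends $x'_2$ to $x'_1$) gives the reverse inclusion, and since $\sigma$ is a bijection of $C'(k)$ it restricts to a bijection between the two classes. This forces $\sharp [x'_1]_{r(s_p)} = \sharp [x'_2]_{r(s_p)}$.

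The only real content is Proposition 3.6 — the compatibility of $\psi'$ with the Galois action — which guarantees that $\sigma$ intertwines the retraction at time $s_p$; granting this, the argument is a short bijection count and no genuine obstacle remains. The one point deserving care is the transitivity of $G$ on $\phi^{-1}(x)$: this is where the Galois hypothesis on $\phi$, rather than mere separability, is used, and it is precisely what supplies a single $\sigma$ realising $x'_1 \mapsto x'_2$ and thereby a bijection of classes rather than an indirect comparison of cardinalities.
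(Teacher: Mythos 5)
Your proof is correct and follows essentially the same route as the paper: both arguments choose, by transitivity of the Galois action on a fibre, an element $\sigma \in G$ intertwining the two points, and then use the equivariance $\psi' \circ (\mathrm{id} \times \sigma) = \sigma \circ \psi'$ to show $\sigma$ bijects the two equivalence classes. The only cosmetic difference is that you pick $\sigma$ with $\sigma(x'_1) = x'_2$ on the fibre of $k$-points, whereas the paper picks $\sigma$ sending $\psi'(s_p,x'_1)$ to $\psi'(s_p,x'_2)$ in the fibre over $p$; the bijection count is the same.
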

\begin{proof}
   The lemma is tautological when $x'_1 \sim_{c(r)} x'_2$. Let us hence assume that
$\psi'(r,x'_1) = p'_1$ and $\psi'(r,x'_2) = p'_2$ where $p'_1$ and $p'_2$ are 
two points on $C'^{\mathrm{an}}$. 
Observe that since $\Upsilon_{C'^{\mathrm{an}}}$ and 
$\Upsilon_{C^{\mathrm{an}}}$ are the images of a pair of compatible deformation retractions, 
$\phi^{\mathrm{an}}(p'_1) = \phi^{\mathrm{an}}(p'_2)$. Let $p := \phi^{\mathrm{an}}(p'_1)$.  
 The Galois group $G := \mathrm{Gal}(k(C')/k(C))$ 
acts trasitively on the set of preimages $\phi^{-1}(p)$. Let $\sigma \in G$ 
be an element of the Galois group such that $\sigma(p'_1) = p'_2$. By construction, the deformation 
retraction $\psi'$ is Galois invariant i.e. if 
$t \in [0,1], q \in C'^{\mathrm{an}}$ and $g \in \mathrm{Gal}(k(C')/k(C))$ then 
$\psi'(t,g(q)) = g(\psi'(t,q))$. 
It follows that
 if $a \sim_{c(r)} x'_1$ then
$\sigma(a) \sim_{c(r)} x'_2$. As $\sigma$ is bijective, $\mathrm{card} [x'_1]_{c(r)} \leq  \mathrm{card} [x'_2]_{c(r)}$.  
By symmetry we conclude that the lemma is true.  
  \end{proof} 
 
 \begin{defi} (The invariant $c_r(x)$ for $x \in C(k)$) 
 \emph{ Let $x \in C(k)$ and $x' \in C'(k)$ such that $\phi(x') = x$. We define
\begin{align*}
   c_r(x) := \mathrm{card} [x']_{c(r)}.  
\end{align*}
  Lemma 5.8 implies that $c_r(x)$ is well defined.}  
\end{defi}

\begin{prop}
  Let $p \in C^{\mathrm{an}}$ be a point which is not of type IV. 
We have the following equality.  
\begin{align*}
    n_p = [k(C') : k(C)]/ (c_{r_p}(x)\mathrm{ram}(x))
\end{align*}
     for any $x \in C(k)$ such that $\psi(r_p,x) = p$. 
\end{prop}
\begin{proof}
    When $p \in C(k)$, we must have that if $x \in C(k)$ is such that $\psi(r_p,x) = p$ then 
    $x = p$ and $r_p = 0$. Hence $c_{r_p}(p) = 1$ and the proposition amounts to showing that 
    $[k(C') : k(C)] = n_p\mathrm{ram}(p)$ which is a well known calculation. 
     
     Suppose $p \in C^{\mathrm{an}} \smallsetminus C(k)$.  
       Let $x \in C(k)$ be such that $\psi(r_p,x) = p$. As the deformation retractions $\psi$ and $\psi'$ are 
      compatible we must have that $\psi'(r_p,y) \in (\phi^{\mathrm{an}})^{-1}(p)$ for every       
       $y \in \phi^{-1}(x)$. Furthermore, given $q \in C'^{\mathrm{an}}$ which maps to $p$ via 
       $\phi^{\mathrm{an}}$, there exists a $y \in \phi^{-1}(x)$ such that $\psi'(r_p,y) = q$.
       This can be deduced from the Galois invariance of the deformation retraction $\psi'$.
        As $\psi$ fixes the points of 
       $C(k)$ which are ramified, we must have that if $x \in C(k)$ such
        that $\psi(r_p,x) = p$ then $n_x = [k(C') : k(C)]$ and $\mathrm{ram}(x) = 1$.  
       The proposition can be deduced from these observations. 
       \end{proof}
       
 Observe that if $x \in C(k)$ is such that $\psi(r_p,x) = p$ then 
 $c_{r_p}(x) = c_s(x)$ for any $s \in [r_p,1]$.  
This observation and Proposition 5.10 allow 
us to define the following invariant - $c_{1}(p)$ for $p \in \Upsilon_{C^{\mathrm{an}}}$.     
   
\begin{defi}(The invariant $c_1(p)$ for $p \in \Upsilon_{C^{\mathrm{an}}}$)
\emph{Let $p \in \Upsilon_{C^{\mathrm{an}}}$. 
The function $c_1 : C(k) \to \mathbb{Z}_{\geq 0}$ factors through $\Upsilon_{C^{\mathrm{an}}}$ via the retraction $\psi(1,\_)$.
Hence we have $c_1 : \Upsilon_{C^{\mathrm{an}}} \to  \mathbb{Z}_{\geq 0}$. By Proposition 5.10,} 
\begin{align*}
    n_p = [k(C') : k(C)]/ (c_1(p)\mathrm{ram}(p))
\end{align*}
    \emph{where $\mathrm{ram}(p)$ is as in Definition 5.6.} 
  \end{defi} 

\subsection{Calculating $l(e_p,p')$}

\begin{lem}
   Let $p \in \Upsilon_{C^{\mathrm{an}}}$ and $e_p \in E_p$. Then $l(e_p,p')$ is a constant as $p'$ varies through the set of preimages of
   $p$ for 
the morphism $\phi^{\mathrm{an}}$. 
\end{lem}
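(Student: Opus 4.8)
The plan is to exploit the Galois symmetry, since we are under the standing hypothesis that $k(C) \hookrightarrow k(C')$ is Galois with group $G := \mathrm{Gal}(k(C')/k(C))$. The two facts I would rely on are, first, that every $\sigma \in G$ induces a homeomorphism of $C'^{\mathrm{an}}$ which commutes with $\phi^{\mathrm{an}}$ (because $\sigma$ fixes $k(C)$ pointwise, so $\phi^{\mathrm{an}} \circ \sigma = \phi^{\mathrm{an}}$), and second, that $G$ acts transitively on the fibre $(\phi^{\mathrm{an}})^{-1}(p)$ -- precisely the transitivity already invoked in the proof of Lemma 5.2. First I would fix two preimages $p'_1, p'_2 \in (\phi^{\mathrm{an}})^{-1}(p)$ and use transitivity to choose $\sigma \in G$ with $\sigma(p'_1) = p'_2$. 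Since $\Upsilon_{C'^{\mathrm{an}}} = (\phi^{\mathrm{an}})^{-1}(\Upsilon_{C^{\mathrm{an}}})$ and $\phi^{\mathrm{an}} \circ \sigma = \phi^{\mathrm{an}}$, the map $\sigma$ restricts to a homeomorphism of $\Upsilon_{C'^{\mathrm{an}}}$ onto itself, so it preserves the condition of being contained in the skeleton.

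Next I would check that $\sigma$ carries lifts of the germ $e_p$ starting at $p'_1$ to lifts starting at $p'_2$. Fixing a representative $u : [0,\epsilon] \to \Upsilon_{C^{\mathrm{an}}}$ of $e_p$, any lift $u'$ starting at $p'_1$ is a path with $u'(0) = p'_1$ and $\phi^{\mathrm{an}} \circ u' = u$; then $\sigma \circ u'$ starts at $\sigma(p'_1) = p'_2$ and satisfies $\phi^{\mathrm{an}} \circ (\sigma \circ u') = \phi^{\mathrm{an}} \circ u' = u$, so it is again a lift of the same germ, now based at $p'_2$. Because $\sigma$ is a homeomorphism it respects the equivalence relation defining germs (two paths agreeing on an initial segment are sent to two paths agreeing on that segment), so $\sigma$ descends to a well-defined map from the set of lifts of $e_p$ at $p'_1$ to the set of lifts at $p'_2$. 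Applying the identical construction to $\sigma^{-1} \in G$ supplies a two-sided inverse, so the map is a bijection and hence $l(e_p,p'_1) = l(e_p,p'_2)$. As $p'_1$ and $p'_2$ were arbitrary, $l(e_p,p')$ is independent of $p' \in (\phi^{\mathrm{an}})^{-1}(p)$.

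The step I expect to require the most care -- and which I regard as the main obstacle -- is the passage from the obvious bijection between \emph{representatives} to a genuine bijection between \emph{germs}: one must be sure that distinct germs at $p'_1$ are sent by $\sigma$ to distinct germs at $p'_2$ and that every germ at $p'_2$ is hit. This is exactly what the presence of the two-sided inverse $\sigma^{-1}$ guarantees, so the point is really bookkeeping rather than a new idea. It is worth noting that, unlike the proof of Lemma 5.2, this argument does not need the full strength of Proposition 3.7 relating $\psi'$ to the Galois action; only the transitivity of $G$ on the fibre and the commutation $\phi^{\mathrm{an}} \circ \sigma = \phi^{\mathrm{an}}$ are used, since lifts of germs are defined purely through $\phi^{\mathrm{an}}$ and paths.
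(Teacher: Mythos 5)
Your proposal is correct and follows essentially the same route as the paper: choose $\sigma \in \mathrm{Gal}(k(C')/k(C))$ with $\sigma(p'_1) = p'_2$ by transitivity on the fibre, note that $\sigma$ is a homeomorphism commuting with $\phi^{\mathrm{an}}$ and hence carries lifts of $e_p$ at $p'_1$ to lifts at $p'_2$, and conclude by symmetry (your explicit two-sided inverse $\sigma^{-1}$ is just a cleaner phrasing of the paper's ``by symmetry'' step).
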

\begin{proof}
   Let $p'_1,p'_2 \in (\phi^{\mathrm{an}})^{-1}(p)$. The Galois group $\mathrm{Gal}(k(C')/k(C))$ acts transitively on the set
of preimages of the point $p$. 
As $\Upsilon_{C'^{\mathrm{an}}} = (\phi^{\mathrm{an}})^{-1}(\Upsilon_{C^{\mathrm{an}}})$,  
the elements of the Galois group are homeomorphisms on $C'^{\mathrm{an}}$ which restrict 
to homeomorphisms on $\Upsilon_{C'^{\mathrm{an}}}$.   
It follows that if $\sigma \in \mathrm{Gal}(k(C')/k(C))$ is such that $\sigma(p'_1) = p'_2$ then $\sigma$ maps the set
of germs $L(e_p,p'_1)$ injectively to the set $L(e_p,p'_2)$. 
By symmetry, we conclude that our proof is complete. 
\end{proof}

\begin{defi} (The invariants $l(e_p)$ and $\widetilde{\mathrm{ram}}(e_p)$ for $p \in \Upsilon_{C^{\mathrm{an}}}$ and $e_p \in E_p$)
\emph{Let $p \in \Upsilon_{C^{\mathrm{an}}}$ and $e_p \in E_p$ (Definition 5.2).
\begin{enumerate}
\item We define $l(e_p) := l(e_p,p')$ for any $p' \in (\phi^{\mathrm{an}})^{-1}(p)$. Lemma 5.12 implies that 
$l(e_p)$ is well defined.    
\item 
\begin{enumerate} 
\item By Section 2.4.3, when $p$ is a point of type II,
$e_p$ corresponds to a discrete valuation of the $\tilde{k}$-function field $\widetilde{\mathcal{H}(p)}$.
For any $p' \in (\phi^{\mathrm{an}})^{-1}(p)$,
the extension of fields $\widetilde{\mathcal{H}(p)} \hookrightarrow \widetilde{\mathcal{H}(p')}$ can be decomposed into the composite of a 
purely inseparable extension and a Galois extension. Hence the ramification 
degree ${\mathrm{ram}}(e'/e_p)$ is constant as $e'$ varies through the set of preimages of $e_p$
for the morphism $d\phi^{alg}_{p'} : T_{p'} \to T_p$ (cf. 2.4.1). Let 
$\widetilde{\mathrm{ram}}(e_p)$ be this number.
\item
 When $p$ is of type I,
the set $E_p$ contains only one element and
 we set $\widetilde{\mathrm{ram}}(e_p) := \mathrm{ram}(p)$.
\item When $p$ is of type III, 
let $\widetilde{\mathrm{ram}}(e_p) := c_1(p)$. 
\end{enumerate}
\end{enumerate}}
\end{defi} 

  Applying Propositions 5.4 and 5.12, the value $2g^{\mathrm{an}}(C') - 2$ can be calculated in terms of 
    $l(e_p)$ as follows. 
       
 \begin{prop}
    Let the notation be as in Definition 5.13. We have that 
   \begin{align*}
      2g^{\mathrm{an}}(C') - 2 = \Sigma_{p \in \Upsilon_{C^{\mathrm{an}}}} n_p ((\Sigma_{e_p \in E_p} l(e_p)) - 2).   
   \end{align*} 
\end{prop}

\begin{prop} Let $p \in \Upsilon_{C^{\mathrm{an}}}$ and $e_p \in E_p$. The following equality holds.
   \begin{align*}
      l(e_p) = [k(C'):k(C)]/(n_p\widetilde{\mathrm{ram}}(e_p)). 
   \end{align*}
\end{prop}
\begin{proof}
     
     When $p$ is a point of type I or III, we must have that $l(e_p)$ is $1$
     and hence the proposition can be easily verified by applying Proposition 5.10. 
      Let us suppose that $p$ is a point of type II. 
    The morphism $\phi : C' \to C$ corresponds to an extension of function fields $k(C) \hookrightarrow k(C')$ which is Galois. 
As $p \in C^{\mathrm{an}}$ is of type II, it corresponds to 
a multiplicative norm on the function field $k(C)$. The set of preimages $\phi^{-1}(p)$ corresponds to 
those multiplicative norms on $k(C')$ which extend the multiplicative norm $p$ on $k(C)$. For every $p' \in (\phi^{\mathrm{an}})^{-1}(p)$, 
$\mathcal{H}(p')$ is the completion of $k(C')$ for $p'$ and is a finite extension of the non-Archimedean valued complete field 
$\mathcal{H}(p)$. The Galois group 
$\mathrm{Gal}(k(C')/k(C))$ acts transitively on the set $(\phi^{\mathrm{an}})^{-1}(p)$.
It follows that degree of the extension $[\mathcal{H}(p'):\mathcal{H}(p)]$ is a constant as $p'$ varies through the set
$(\phi^{\mathrm{an}})^{-1}(p)$. We denote this number $f(p)$. Hence we have that 
 \begin{align*} 
  [k(C'):k(C)] = n_pf(p).   
     \end{align*} 
  By Lemma 4.2, $f(p) = [\widetilde{\mathcal{H}(p')}:\widetilde{\mathcal{H}(p)}]$.
Uniquely associated to the $\tilde{k}$-function fields $\mathcal{H}(p)$ and $\mathcal{H}(p')$
 are smooth, projective    
$\tilde{k}$-curves denoted $\tilde{C}_p$ and $\tilde{C}'_{p'}$.
 The germ $e_p$ corresponds to a closed point on the former of these curves. The number $l(e_p)$   
is the cardinality of the set 
of preimages of the closed point $e_p$ for the morphism $\tilde{C}_{p'} \to \tilde{C}_p$ 
induced by $\phi^{\mathrm{an}}$. The result now follows from 
[\cite{Liu}, Theorem 7.2.18] applied to the $\tilde{k}$-function fields $\widetilde{\mathcal{H}(p)}, \widetilde{\mathcal{H}(p')}$ and 
the divisor $e_p$.  
\end{proof}

The results of this section can be compiled so that the value $2g^{\mathrm{an}}(C') - 2$ can be computed in terms of the
invariant $\widetilde{\mathrm{ram}}$ introduced below and the invariants $\mathrm{ram}$ and $c_1$ from Definition 5.11.

\begin{defi}(The invariant $\widetilde{\mathrm{ram}}(p)$ for $p \in \Upsilon_{C^{\mathrm{an}}}$)
\emph{Let $p \in \Upsilon_{C^{\mathrm{an}}}$. We define 
 $\widetilde{\mathrm{ram}}(p) := \Sigma_{e_p \in E_p} (1/ \widetilde{\mathrm{ram}}(e_p))$.}
\end{defi} 

    The following theorem can be verified using 5.14 and 5.10. 
 
\begin{thm}
    Let $\phi : C' \to C$
     be a finite morphism between smooth projective irreducible $k$-curves such that 
the extension of function fields $k(C) \hookrightarrow k(C')$ induced by $\phi$ is Galois. Let $g^{\mathrm{an}}(C')$ be as in Definition 2.26. 
For $p \in \Upsilon_{C^{\mathrm{an}}}$, let $\widetilde{\mathrm{ram}}(p), c_1(p)$ and $\mathrm{ram}(p)$ be the invariants introduced in Definition 5.6, 5.11
and 5.16.   
    We have that 
    \begin{align*} 
        2g^{\mathrm{an}}(C') - 2 = \mathrm{deg}(\phi) \Sigma_{p \in \Upsilon_{C^{\mathrm{an}}}} [\widetilde{\mathrm{ram}}(p) - 2/(c_1(p)\mathrm{ram}(p))].  
    \end{align*} 
\end{thm}

\section{Appendix} 
   
      At several instances over the course of this paper, we used the following fact.
      Let $\phi : C' \to C$ be a 
       finite surjective morphism 
       between $k$-curves where $C$ is in addition normal. 
       The induced morphism $\phi^{\mathrm{an}} :C'^{\mathrm{an}} \to C^{\mathrm{an}}$ is then open. 
        The following lemma 
      justifies this statement. 
 
  \begin{lem} 
Let $F$ be a non-Archimedean complete non trivially real valued field. 
  Let  $\phi : V \to W$ be a finite surjective morphism between irreducible $F$-varieties with $W$ normal. 
  The induced morphism $\phi^{\mathrm{an}} : V^{\mathrm{an}} \to W^{\mathrm{an}}$ is an open morphism. 
\end{lem} 
\begin{proof}
    We apply Lemma 3.2.4 in \cite{berk} to prove the lemma. 
   Clearly, we need only show that if $W$ is normal 
    then $W^{\mathrm{an}}$ is locally irreducible. By 3.4.3 in loc.cit, $W^{\mathrm{an}}$ is a normal $k$-analytic space. 
    Let $x \in W^{\mathrm{an}}$ and $U \subset W^{\mathrm{an}}$ be a $k$-analytic neighborhood of $x$. Let $U' \subset U$ be the connected 
    component that contains $x$. The space $U'$ is a normal $k$-analytic space. By 3.1.8 in loc.cit, it must be irreducible. This completes the proof.  
\end{proof}

               \end{document}